\theoremstyle{thmstyleone}%
\newtheorem{theorem}{Theorem}
\theoremstyle{thmstyletwo}%
\theoremstyle{thmstylethree}%
\newtheorem{definition}{Definition}
\newtheorem{assumption}{Assumption}%
\newtheorem{lemma}{Lemma}%
\newtheorem{corollary}{Corollary}
\newcommand{\dotprod}[2]{\left\langle #1,#2 \right\rangle}
\newcommand{\norms}[1]{\left\| #1 \right\|}
\newcommand{\expect}[1]{\mathbb{E}\left[ #1 \right]}
\definecolor{PineGreen}{HTML}{008B72}
\providecommand{\norm}[1]{\left\lVert#1\right\rVert}
  \providecommand{\R}{\mathbb{R}} %
  \providecommand{\0}{\mathbf{0}}
  \providecommand{\ee}{\mathbf{e}}
  \renewcommand{\gg}{\mathbf{g}}
  \providecommand{\mG}{\mathbf{G}}
  \providecommand{\mI}{\mathbf{I}}
  \providecommand{\mL}{\mathbf{L}}
  \providecommand{\mP}{\mathbf{P}}
  \providecommand{\mW}{\mathbf{W}}
  \providecommand{\cL}{\mathcal{L}}
  \providecommand{\cV}{\mathcal{V}}
  \newcommand{\bxi}{\boldsymbol{\xi}}
\newcommand{\circledOne}{\text{\ding{172}}}
\newcommand{\circledTwo}{\text{\ding{173}}}
\newcommand{\proj}{\mathrm{proj}}
\newcommand{\g}{\nabla}
\newcommand{\ones}{\mathbf{1}}
\newcommand{\bg}{\mathrm{D}}
\def\<#1,#2>{\langle #1,#2\rangle}
\newcommand{\sqn}[1]{\norm{#1}^2}
\newcommand{\vect}[1]{\begin{bmatrix}#1\end{bmatrix}}
\newcommand{\eqdef}{\coloneqq}
\newcommand{\z}{\hat{z}}
\newcommand{\x}{\hat{x}}
\newcommand{\al}[1]{{\color{black}#1}} 
\newcommand{\aw}[1]{{\color{black}#1}} 
\providecommand{\mycomment}[3]{\todo[inline, caption={},size=footnotesize,color=#1!20]{\textbf{#2: }#3}}%
\newcommand\commenter[2]%
\newcommand\csname #1\endcsname[1]{\mycomment{#2}{#1}{##1}}
\begin{document}

\title[Non-Smooth Setting of Stochastic
Decentralized Convex Optimization Problem Over
Time-Varying Graphs]{Non-Smooth Setting of Stochastic
Decentralized Convex Optimization Problem Over
Time-Varying Graphs}


\author*[1,2]{\fnm{Aleksandr} \sur{Lobanov}}\email{lobbsasha@mail.ru} \equalcont{Equal contribution.}

\author[1]{\fnm{Andrew} \sur{Veprikov}}\email{veprikov.as@phystech.edu}\equalcont{Equal contribution.}

\author[1]{\fnm{Georgiy} \sur{Konin}}\email{koningeorgiy@gmail.com}

\author[1,3,4]{\fnm{Aleksandr} \sur{Beznosikov}}\email{beznosikov.an@phystech.edu}

\author[1,3,4]{\fnm{Alexander} \sur{Gasnikov}}\email{gasnikov@yandex.ru}

\author[5]{\fnm{Dmitry} \sur{Kovalev}}\email{dakovalev1@gmail.com}

\affil[1]{\orgname{Moscow Institute of Physics and Technology}, \orgaddress{\city{Dolgoprudny}, \country{Russia}}}

\affil[2]{\orgname{ISP RAS Research Center for Trusted Artificial Intelligence}, \orgaddress{\city{Moscow}, \country{Russia}}}

\affil[3]{\orgname{Skoltech}, \orgaddress{\city{Moscow}, \country{Russia}}}

\affil[4]{\orgname{Institute for Information Transmission Problems}, \orgaddress{\city{Moscow}, \country{Russia}}}

\affil[5]{\orgname{Universite Catholique de Louvain}, \orgaddress{\city{Ottignies-Louvain-la-Neuve}, \country{Belgium}}}

\abstract{
Distributed optimization has a rich history. It has demonstrated its effectiveness in many machine learning applications, etc. In this paper we study a subclass of distributed optimization, namely decentralized optimization in a non-smooth setting. Decentralized means that $m$ agents (machines) working in parallel on one problem communicate only with the neighbors agents (machines), i.e. there is no (central) server through which agents communicate. And by non-smooth setting we mean that each agent has a convex stochastic non-smooth function, that is, agents can hold and communicate information only about the value of the objective function, which corresponds to a gradient-free oracle. In this paper, to minimize the global objective function, which consists of the sum of the functions of each agent, we create a gradient-free algorithm by applying a smoothing scheme via $l_2$ randomization. We also verify in experiments the obtained theoretical convergence results of the gradient-free algorithm proposed in this paper.}


\keywords{Stochastic Accelerated
Decentralized Optimization Method, Time-Varying Graphs, Non-Smooth Opimization, Gradient-Free Algorithms}

\maketitle

\section{Introduction}\label{sec:Intro}

    Modern machine learning models often give rise complex high-dimensional learning problems that can be computationally very expensive to optimization. Therefore, to reduce computational costs, it becomes more important to use optimization algorithms that have the properties of parallelism \cite{Dean_2012, Dekel_2012}, which can reduce the time required for training. Because of this property, there are modern state-of-the-art generative models~\cite{Ramesh_2021, Radford_2021}, language models~\cite{Chowdhery_2022, Touvron_2023}, and many others \cite{Wang_2020}. However, it is worth noting that modern machine learning algorithms with the property of parallelism are often based on Stochastic Gradient Descent (SGD) \cite{Robbins_1951}, which uses the stochastic gradient estimation approach of the objective function, which causes a variance component that can affect convergence (when the variance is large). As a rule, the convergence result of such algorithms can be improved by batching the stochastic gradient estimates, which are just easily distributed on several computing resources (machines). That is why in the last 5 years algorithms with the property of parallelism for solving federated optimization problems \cite{Woodworth_2020, Woodworth_2021, Lobanov_2022} and distributed optimization problems (centralized \cite{Stich_2021, Wang_2022, Balasubramanian_2022} and decentralized \cite{Yu_2019, Li_2021, Kovalev_2022} approaches) have been actively developed.  

    Distributed decentralized optimization arises when there is no central server that receives information (e.g. gradient of objective function), performs updates, and returns updated data to each agent (machine), as for example in distributed centralized optimization or federated optimization problems. In a decentralized setting, all agents are connected via a communication network and each agent can make local updates or communicate only its neighbors. Such decentralized optimization problems appear in many applications, for example, network resource allocation \cite{Beck_2014}, distributed statistical inference and machine learning \cite{Forero_2010, Nedic_2017}, cooperative control \cite{Giselsson_2013}, distributed averaging \cite{Xiao_2007, Cai_2014}, estimation by sensor networks \cite{Rabbat_2004}, training deep neural networks \cite{Lian_2017, Tang_2018, Assran_2019} and many others. A special interest in decentralized optimization arises in time-varying networks \cite{Zadeh_1961, Kolar_2010}: this is the situation where the links in a communication network are allowed to change over time. This network concept has direct relevance to many applications, such as future-generation federated learning systems \cite{Konevcny_2016, McMahan_2017}, where the communication pattern among pairs of mobile devices will be determined by their physical proximity, which naturally changes over time.

    Solving such problems requires calculating the gradient of the objective function. But what if the gradient is not available because the objective function is non-smooth? Before answering this question, let us note that such optimization problem, when the oracle returns only the value of the objective function at the requested point, is called black-box optimization problem \cite{Conn_2009, Audet_2017}. And this class of problems has been actively developing lately \cite{Agarwal_2010, Hernandez_2014, Bubeck_2015, Bach_2016, Shamir_2017, Lattimore_2021, Tsybakov_2022, Nguyen_2022, Tsybakov_2023}. For example, the review \cite{Gasnikov_2022} on creating gradient-free randomized algorithms to solve the black-box problem considers techniques for creating zero-order algorithms for cases where the objective function is non-smooth \cite{Gasnikov_ICML}, smooth \cite{Dvinskikh_2022}, or has increased smoothness \cite{Tsybakov_2020}. The efficiency of such algorithms is usually determined by three optimality criteria: by the number of consecutive iterations, by the total number of calls to the gradient-free oracle, and by the maximum allowable noise level at which a certain accuracy can be guaranteed. Where the latter may seem unexpected, but this criterion is really important because there are often situations in practice where adversarial noise affects the cost of calling to the gradient-free oracle~\cite{Bogolubsky_2016}.

    In this paper, we focus on solving the problem of minimizing sums of strongly convex functions stored decentralized in nodes of a communication network whose connections may change over time. To use the smoothing technique via $l_2$ randomization, we generalize the ADOM+ algorithm \cite{Kovalev_2021} to a stochastic setting, developing a new algorithm Stochastic Accelerated Decentralized Optimization Method (SADOM). Based on stochastic SADOM and using the smoothing technique, we provide, to the best of our knowledge, the first gradient-free algorithm for solving a stochastic non-smooth distributed decentralized optimization problem: Zero-Order Stochastic Accelerated Decentralized Optimization Method (ZO-SADOM). We demonstrate the convergence of the proposed ZO-SADOM algorithm in practice using a model example.
    
    \subsection{Main Contributions}

    Our main contributions can be summarized as:

    \begin{itemize}
        \item We prove the convergence of the Stochastic Accelerated Decentralized Optimization Method (SADOM, see Theorem \ref{th:SADOM}), obtaining the same communication complexity as the state of the art algorithm ADOM \cite{Kovalev_2021_ADOM} and ADOM+ \cite{Kovalev_2021}. We show that by adding a batching, the term responsible for stochasticity is reduced and the SADOM algorithm can have the same convergence rate as ADOM+.

        \item For a non-smooth regime of decentralized strongly convex optimization over time-varying graphs, we present a novel Zero-Order Stochastic Accelerated Decentralized Optimization Method (ZO-SADOM, see Theorem \ref{th:ZOSADOM_aw}). We show that this algorithm is robust for both deterministic setting and stochastic setting of the problem. We derive optimal estimates on the total number of calls to the gradient-free oracle that correspond to lower bounds.

        \item We demonstrate the convergence of the proposed algorithm Zero-Order Stochastic Accelerated Decentralized Optimization Method (ZO-SADOM) in practice using a model example. We show how the convergence of novel accelerated decentralized gradient-free algorithm in a decentralized setting on time-varying graphs depends on the type of graph: geometric graphs and ring \& star graphs.
    \end{itemize}

    
    \subsection{Paper Organization}

    This article has the following structure. In Section~\ref{sec:Related Works} we give the related works. We describe the main assumptions and notations as well as the known results in Section~\ref{sec:Background}. We give the convergence result for the Stochastic Accelerated Decentralized Optimization Method in Section~\ref{sec:Stochastic Smooth Setting}.  Section~\ref{sec:Zero-Order Methods_aw} contains the main result of this work. While in Section~\ref{sec:Discussion} we discuss the theoretical results obtained. In Section~\ref{sec:Experiments}, we present numerical experiments. Section~\ref{sec:Conclusion} concludes the article. All supplementary materials, including proofs of theorems, are presented in the Appendix. 


\section{Related Works}\label{sec:Related Works}

\subparagraph{Decentralized optimization} Our work is closely related to decentralized optimization algorithms working on time-varying networks. The development of such algorithms has been actively pursued in recent years \cite{Nedic_2017_DIG, Maros_2018, Qu_2019, Li_2020, Ye_2020, Kovalev_2021_ADOM, Kovalev_2021}.  For example, in \cite{Nedic_2017_DIG} the authors proposed the DIGing algorithm, which achieves communication complexity $\mathcal{O}\left( n^{1/2} \chi^2 \kappa^{3/2} \log \frac{1}{\varepsilon} \right)$. Further, the authors of \cite{Maros_2018} improved this estimate by getting rid of the total number of nodes $n$, proposing the PANDA algorithm $\mathcal{O}\left( \chi^2 \kappa^{3/2} \log \frac{1}{\varepsilon} \right)$. The following works \cite{Li_2020, Ye_2020} proposed algorithms that improved the communication complexity to a near-optimal estimate accurate to the logarithm: $\mathcal{O}\left( \chi \kappa^{1/2} \log^2 \frac{1}{\varepsilon} \right)$ APM~\cite{Li_2020}, $\mathcal{O}\left( \chi \kappa^{1/2} \log (\kappa) \log \frac{1}{\varepsilon} \right)$ Mudag~\cite{Ye_2020}. Finally, in \cite{Kovalev_2021_ADOM} and \cite{Kovalev_2021} the authors presented algorithms ADOM and ADOM+ that achieve optimal estimates for communication rounds $\mathcal{O}\left( \chi \kappa^{1/2} \log \frac{1}{\varepsilon} \right)$. The difference of these algorithms is that in the ADOM \cite{Kovalev_2021_ADOM} algorithm one has to compute the gradient of the dual function. In our work, we also solve a decentralized optimization problem on time-varying networks, only in a non-smooth setting. Therefore, to create a new algorithm that solves this problem in a non-smooth setting, we will be based on an optimal algorithm (ADOM+) that uses the gradient of the primal objective function.

\subparagraph{Randomized approximation} In many works \cite{Ermoliev_1976, Richt_2014, Wright_2015, Bubeck_2017, Lobanov_2022, Lobanov_2023_Frank, Lobanov_2023_Noise} various randomized techniques are actively used to create gradient-free algorithms. For example, in \cite{Lobanov_2023_Frank} authors applied a smoothing technique via $l_2$ randomization (based on the accelerated conditional gradient method), thus creating an optimal algorithm in terms of the number of oracle calls, which is robust in both non-smooth and smooth settings and significantly outperforms its counterparts in the class of Frank-Wolfe type algorithms. The authors in \cite{Lobanov_2022} presented a smoothing technique via $l_1$ randomization to create gradient-free algorithms for solving non-smooth convex optimization problems. This smoothing scheme is theoretically superior to the smoothing scheme via $l_2$ randomization in the simplex space, but the authors showed that in practice the advantage is imperceptible. In another series of works \cite{Tsybakov_2020, Novitskii_2021, Lobanov_2023, Tsybakov_2023}, the authors developed gradient-free algorithms using randomization with kernel (Kernel approximation). Because of the kernel there is a bias in the estimates of the gradient-free oracle, but due to this kernel it is possible to take into account the advantages of the increased smoothness of the objective function. In our work, we use a smoothing scheme with $l_2$ randomization to create a gradient-free algorithm. Since randomization in turn introduces stochasticity, it is important to base the algorithm on the stochastic version. Therefore, we base on the stochastic version of the ADOM+ algorithm, namely SADOM (this result may be of independent interest) and apply a smoothing technique already to it, which allows us to create as far as we know the first gradient-free algorithm to solve a non-smooth decentralized optimization problem on a time-varying graphs.


\section{Background}\label{sec:Background}
In this paper we study a standard decentralized optimization problem:
\begin{equation}\label{eq:init_problem}
    \min_{x \in \mathbb{R}^d} \sum_{i=1}^n f_i (x),
\end{equation}
where each function $f_i : \mathbb{R}^d \rightarrow \mathbb{R}^d$ is stored on a compute node $i \in \{ 1,...,n \}$. Nodes are connected by a communication network. Each nodes is an independent computational agent (machine) that can perform computations based on its local state and data, and can directly communicate with its neighbors only. Next, to clarify the class of problems to be solved, we introduce a basic definitions and assumptions on the objective function, decentralized communication and gossip matrices.

    \subsection{Definitions and Assumptions on the Objective Function}
    
    Throughout this article we use the following definitions.
    \begin{definition}[Convex function]\label{def:convex}
        Function $f$ is convex if for any $x, y \in \mathbb{R}^d$ it holds
        \begin{equation*}
            f(y) \geq f(x) + \dotprod{\nabla f(x)}{y - x}.
        \end{equation*}
    \end{definition}
    \begin{definition}[Strongly convex]\label{def:strongly_convex}
        Function $f$ is $\mu$-strongly convex if for any $x, y \in \mathbb{R}^d$ it holds
        \begin{equation*}
            f(y) \geq f(x) + \dotprod{\nabla f(x)}{y - x} + \frac{\mu}{2} \norms{y - x}^2.
        \end{equation*}
    \end{definition}
    \begin{definition}[$L$-smooth]\label{def:L_smooth}
        Function $f$ is $L$-smooth if for any $x,~y~\in~\mathbb{R}^d$~it~holds
        \begin{equation*}
            f(y) \leq f(x) + \dotprod{\nabla f(x)}{y - x} + \frac{L}{2} \norms{y - x}^2.
        \end{equation*}
    \end{definition}
    Next, to prove the theoretical results, we need standard assumptions on the objective function of problem \eqref{eq:init_problem}.
    \begin{assumption}\label{ass:convex}
        For all $i = 1,..., n$, function $f_i$ is convex (see Definition \ref{def:convex}).
    \end{assumption}
    \begin{assumption}\label{ass:strongly_convex}
        For all $i = 1,..., n$, function $f_i$ is $\mu$-strongly convex (see Definition~\ref{def:strongly_convex}).
    \end{assumption}
    \begin{assumption}\label{ass:L_smooth}
        For all $i = 1,..., n$, function $f_i$ is $L$-smooth (see Definition~\ref{def:L_smooth}).
    \end{assumption}
    These assumptions on functions are basic and are often used in the literature (see, for example, \cite{Liu_2022, Rogozin_2022}). The combination of Assumptions \ref{ass:strongly_convex} and \ref{ass:L_smooth} naturally leads to a value of $\kappa = \frac{L}{\mu}$, known as the conditionality number of the function $f_i$. And strong convexity implies that problem \eqref{eq:init_problem} has a unique solution.

    \subsection{Decentralized Communications}

    We now introduce the necessary notation and definition that we will use in describing our algorithm. Let $\mathcal{V}=\{ 1,...,n\}$ denote the set of the compute nodes. We assume that distributed communication is performed through a series of communication rounds. At each rounds $q \in \{ 0,1,2,... \}$, the nodes interact through a network represented by an communication graph $\mathcal{G}^q=\left( \mathcal{V}, \mathcal{E}^q \right)$, where $\mathcal{E}^q \subset \left\{ (i,j) \in \mathcal{V} \times \mathcal{V} : i\neq j \right\}$ is the set of links at round $q$. The nodes can only communicate to their immediate neighbors in the corresponding network. This type of communication is commonly referred to in the literature as decentralized communication (see, e.g., \cite{Nedic_2017_DIG, Kovalev_2021_ADOM}). 
    
    Decentralized communication between nodes is usually represented through the matrix - vector multiplication with the gossip matrix. The following are assumptions for the matrix that is usually considered for each decentralized communication~round.

    \begin{assumption}\label{ass:gossip_matrix}
        For any decentralized communication round $q \in \{ 0,1,2,...\}$ matrix $\mW(q) \in \mathbb{R}^{n \times n}$ satisfies the following properties:
        \begin{enumerate}
            \item $\mW(q)$ is symmetric and positive semi-definete;
            
            \item $\mW(q)_{i,j} = 0$ if $i \neq j$ and $(i,j) \in \mathcal{E}^q$;

            \item ker $\mW(q) = \{ (x_1, x_2, ..., x_n) \in \mathbb{R}^d : x_1 = x_2 = ... = x_n\}$;

            \item There exists $\chi \geq 1$, such that $\sqn{\mW x - x} \leq (1-\chi^{-1})\sqn{x}$ \\for all $x\in\left\{ (x_1,\ldots,x_n) \in \R^n :\sum_{i =1}^n x_i = 0\right\}.$
        \end{enumerate}
    \end{assumption}
    Throughout this article we will refer to this matrix $\mW(q)$ as a gossip matrix. Note that the condition number of time-varying network $\chi$ defined by $\chi~=~\sup_q \frac{\lambda_{\max}(\mL(q))}{\lambda_{\min}^+ (\mL (q))}$, where $\lambda_{\max}(\mL(q))$ and $\lambda_{\min}^+ (\mL (q))$ denote the largest and the smallest positive eigenvalue $\mL(q)$ respectively, and $\mL(q)$ is the Laplacian of an undirected connected graph~$\mathcal{G}^q$. A typical example of this matrix is $\mW(q) = \lambda_{\max}^{-1}(\mL(q))~\cdot~\mL(q)$ (see~\cite{Kovalev_2021, Rogozin_2022}).


    \subsection{Convergence result of ADOM+}\label{subsec:Adom+}

    In this subsection we present the convergence results of the optimal algorithm ADOM+ from \cite{Kovalev_2021}, which we will need in the sections below. For this purpose, we provide a sequence of reformulations of the problem \eqref{eq:init_problem}, like the authors of \cite{Kovalev_2021} did.

    \paragraph{Reformulation via Lifting}
    Consider function $F: \left( \mathbb{R}^d \right)^{\mathcal{V}} \rightarrow \mathbb{R}$ defined by
    \begin{equation*} 
        F(x) = \sum_{i \in \mathcal{V}} f_i(x_i),
    \end{equation*}
    where $x = (x_1, ..., x_n) \in \left( \mathbb{R}^d \right)^{\mathcal{V}}$. Then $F$ is $L$-smooth $\mu$-strongly convex since the individual functions $f_i$ are. Consider also the consensus space $\mathcal{L} \subset \left( \mathbb{R}^d \right)^{\mathcal{V}} $ defined by
    \begin{equation*}
        \mathcal{L} = \left\{ x = (x_1, ..., x_n) \in \left( \mathbb{R}^d \right)^{\mathcal{V}} : x_1 = ... = x_n \right\}.
    \end{equation*}
    Using this notation, we arrive at the equivalent formulation of problem \eqref{eq:init_problem}:
    \begin{equation}\label{eq:lifting_problem}
        \min_{x \in \mathcal{L}} F(x).
    \end{equation}
    Since the Function $F(x)$ is strongly convex (see Definition \ref{def:strongly_convex} and Assumption \ref{ass:strongly_convex}), this problem also has a unique solution, which we denote as $x^* \in \mathcal{L}$.

    \paragraph{Saddle Point Reformulation}
    Next, we introduced a equivalent formulation of problem \eqref{eq:lifting_problem} using a slack variable $ w \in \left( \mathbb{R}^d \right)^{\mathcal{V}}$ and a parameter $\nu \in (0,\mu)$:
    \begin{equation*}
        \min\limits_{\substack{x, w \in \left( \mathbb{R}^d \right)^{\mathcal{V}}\\w=x, w \in \mathcal{L}}} F(x) - \frac{\nu}{2}\norms{x}^2 + \frac{\nu}{2}\norms{w}^2.
    \end{equation*}
    It is worth noticing that the function $ F(x) - \frac{\nu}{2}\norms{x}^2$ is $(\mu-\nu)$-strongly convex since $\nu < \mu$. The latter problem is a minimization problem with linear constraints. Hence, it has the equivalent saddle-point reformulation:
    \begin{equation*}
        \min \limits_{x,w \in  \left( \mathbb{R}^d \right)^{\mathcal{V}}}\max\limits_{y\in  \left( \mathbb{R}^d \right)^{\mathcal{V}}}\max\limits_{z \in \mathcal{L}^\perp} F(x) - \frac{\nu}{2}\norms{x}^2 + \frac{\nu}{2}\norms{w}^2 + \dotprod{y}{w-x} + \dotprod{z}{w},
    \end{equation*}
    where $\mathcal{L}^\perp \subset \left( \mathbb{R}^d \right)^{\mathcal{V}}$ is an orthogonal complement to the space $\mathcal{L}$, defined by
    \begin{equation*}
        \mathcal{L}^\perp = \left\{ (z_1, ..., z_n) \in \left( \mathbb{R}^d \right)^{\mathcal{V}} : \sum_{i=1}^{n}z_i = 0 \right\}.
    \end{equation*}
    Minimization in $w$ gives the final saddle-point reformulation of the problem \eqref{eq:lifting_problem}:
    \begin{equation}\label{eq:saddle_problem}
        \min \limits_{x\in \left( \mathbb{R}^d \right)^{\mathcal{V}}}\max\limits_{y\in \left( \mathbb{R}^d \right)^{\mathcal{V}}}\max\limits_{z \in \mathcal{L}^\perp} F(x) - \frac{\nu}{2}\norms{x}^2  - \dotprod{y}{x}  - \frac{1}{2\nu}\norms{y+z}^2.
    \end{equation}
    Further, by $\mathsf{E}$ we denote the Euclidean space $\mathsf{E}= \left( \mathbb{R}^d \right)^{\mathcal{V}} \times \left( \mathbb{R}^d \right)^{\mathcal{V}} \times \mathcal{L}^\perp$.
    It can be shown that the saddle-point problem \eqref{eq:saddle_problem} has a unique solution $(x^*,y^*,z^*) \in \mathsf{E}$, which satisfies the following optimality conditions:
    \begin{align}
        0&= \nabla F(x^*)  - \nu x^* - y^*,\label{opt:x}\\
        0&= \nu^{-1}(y^* + z^*) + x^*,\label{opt:y}\\
        \cL &\ni y^* + z^*.\label{opt:z}
    \end{align}

    \paragraph{Monotone Inclusion Reformulation}
    Consider two monotone operators $A,B : \mathsf{E} \rightarrow \mathsf{E}$, defined via
    \begin{equation}\label{eq:AB}
          A(x,y,z) = \vect{\nabla F(x) - \nu x\\ 
         \nu^{-1}(y+z)\\ 
         \mP \nu^{-1}(y+z)},
          \qquad B(x,y,z) = \vect{-y\\x\\0},
    \end{equation}
    where $\mP$ is an orthogonal projection matrix onto the subspace $\cL^\perp$. Matrix $\mP$ is given~as \begin{equation*}
        \mP = (\mI_n - \frac{1}{n} \ones_n \ones_n^\top) \otimes \mI_d,
    \end{equation*}
    where $\mI_p$ denotes $p\times p$ identity matrix, $\ones_n = (1,\ldots,1) \in \R^n$, and $\otimes$ is the Kronecker product.  Then, solving problem \eqref{eq:saddle_problem} is equivalent to finding $(x^*,y^*, z^*) \in \mathsf{E}$, such that
    \begin{equation}\label{eq:AB0}
    	A(x^*,y^*,z^*) + B(x^*,y^*,z^*) = 0.
    \end{equation}
    Optimality condition \eqref{opt:z} is equivalent to $\proj_{\cL^\perp}(y^* + z^*) = 0$ or $\mP\nu^{-1}(y^* + z^*)  = 0$. It is clear that \eqref{eq:AB0} is another way to write the optimality conditions~for~problem~\eqref{eq:saddle_problem}.

    Now, by performing successive reformulations of the problem \eqref{eq:init_problem} and preliminarily replacing the last component $\mP \nu^{-1}(y+z)$ of the operator $A$ from \eqref{eq:AB} with $\left( \mW(q) \otimes \mI_d \right) \nu^{-1}(y+z)$, we can write down the convergence results of the ADOM+ algorithm proposed in \cite{Kovalev_2021}.

    \begin{lemma}[Convergence of ADOM+, \cite{Kovalev_2021}]  \label{lem:ADOM+}
        To reach precision $\norms{x^N - x^*}^2 \leq \epsilon$, Algorithm ADOM+ requires the following number of iterations
	\begin{equation*}
            N = \mathcal{O} \left( \chi \sqrt{\frac{L}{\mu}}\log \frac{1}{\epsilon}\right).
	\end{equation*}
    \end{lemma}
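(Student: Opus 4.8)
The plan is to treat this as a linear-convergence statement for the accelerated forward--backward iteration associated with the monotone inclusion \eqref{eq:AB0}, and to establish it through a single Lyapunov (potential) function argument. Because the individual $f_i$ are $\mu$-strongly convex and $L$-smooth (Assumptions \ref{ass:strongly_convex} and \ref{ass:L_smooth}), the lifted function $F$ inherits both properties, so the operator $A$ in \eqref{eq:AB} is $(\mu-\nu)$-strongly monotone in its $x$-block while $B$ is linear and skew, hence monotone; the saddle point $(x^*,y^*,z^*)$ is characterized by the optimality conditions \eqref{opt:x}--\eqref{opt:z}, and strong convexity guarantees it is unique. First I would introduce a potential $\Psi^k$ that aggregates the squared distances of the running iterates to $(x^*,y^*,z^*)$ in the primal, dual, and consensus blocks, with carefully tuned positive weights and including the auxiliary momentum sequences that the acceleration mechanism maintains.

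The core of the argument is a one-step contraction $\Psi^{k+1} \le (1-\rho)\Psi^k$. To obtain it I would expand each updated iterate, invoke strong monotonicity of the $x$-block (contributing the factor $\mu-\nu$) together with $L$-smoothness to cancel the first-order cross terms, and then choose the step sizes and momentum parameters so that the negative quadratic terms dominate the remaining residuals. The delicate point, and what I expect to be the main obstacle, is the replacement of the exact projection $\mP\nu^{-1}(y+z)$ in $A$ by the inexact, time-varying gossip action $(\mW(q)\otimes \mI_d)\nu^{-1}(y+z)$: since $\mW(q)$ only satisfies the approximate-contraction bound of Assumption \ref{ass:gossip_matrix}, namely $\norms{\mW x - x}^2 \le (1-\chi^{-1})\norms{x}^2$ on $\cL^\perp$, the consensus update carries an error that must be absorbed into the potential decrease rather than cancelled exactly. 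Controlling this error is precisely what forces the communication-related step size to scale like $\chi^{-1}$, so that the contraction rate behaves like $\rho = \Theta\!\left(\chi^{-1}\sqrt{\mu/L}\right)$; here the $\sqrt{\mu/L}$ is the usual gain from Nesterov-type acceleration applied to the $(\mu-\nu)$-strongly-monotone, $L$-smooth primal block, with $\nu$ taken as a constant fraction of $\mu$ so that $\mu-\nu = \Theta(\mu)$.

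Finally, unrolling the contraction gives $\Psi^N \le (1-\rho)^N \Psi^0$, and since $\norms{x^N-x^*}^2$ is bounded by a constant multiple of $\Psi^N$, imposing $(1-\rho)^N \Psi^0 \le \epsilon$ and using $\log(1-\rho)^{-1} \ge \rho$ yields $N = \mathcal{O}(\rho^{-1}\log(1/\epsilon)) = \mathcal{O}\!\left(\chi\sqrt{L/\mu}\,\log(1/\epsilon)\right)$, as claimed. The only nonroutine verifications are the positivity and mutual compatibility of the potential weights and the tuning of the parameter triple (primal step, dual step, momentum) so that the smoothness residual and the gossip error are \emph{simultaneously} dominated by the available negative curvature; everything else is bookkeeping. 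As the statement is quoted from \cite{Kovalev_2021}, the explicit parameter schedule can be imported directly from that reference.
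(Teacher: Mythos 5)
Your outline matches the argument the paper relies on: Lemma~\ref{lem:ADOM+} is imported from \cite{Kovalev_2021}, and the paper's own Appendix proof of Theorem~\ref{th:SADOM} is exactly this Lyapunov scheme (potentials $\Psi_x^k$ and $\Psi_{yz}^k$ over the primal, dual, and consensus blocks plus the momentum sequences, one-step contraction at rate $1-\Theta(\chi^{-1}\sqrt{\mu/L})$, with the gossip error absorbed via the $\chi^{-1}$ step-size scaling), of which the deterministic case $\sigma=\Delta=0$ recovers the stated bound. Your sketch is a correct plan along essentially the same route; the only thing it defers is the explicit parameter schedule, which, as you note, is bookkeeping imported from the reference.
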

    This result can be rewritten in terms of the rate of convergence. Then there exists $C > 0$, such that
    \begin{equation}
        \norms{x^N - x^*}^2 \leq C \left(1 - \frac{\lambda_{\min} \sqrt{\mu}}{32 \lambda_{\max}\sqrt{L}}\right)^N.
    \end{equation}



\section{Stochastic Smooth Setting}\label{sec:Stochastic Smooth Setting}

In this section we generalize the results of the paper \cite{Kovalev_2021} to the stochastic setting. To do this, the original problem \eqref{eq:init_problem} of decentralized optimization should be reformulated into a stochastic problem as follows:
\begin{equation}\label{eq:stoch_init_problem}
    \min_{x \in \mathbb{R}^d} \sum_{i=1}^n \left\{f_i(x):= \mathbb{E}_{\xi} f_i (x,\xi)\right\}.
\end{equation}
A stochastic problem statement is quite common in community. Then by performing a similar reformatting procedure for the original stochastic problem \eqref{eq:stoch_init_problem}, as shown in Subsection \ref{subsec:Adom+}, we can present our Stochastic Accelerated Decentralized Optimization Method (see Algorithm \ref{alg:SADOM}). 

\begin{algorithm}[H]
	\caption{Stochastic Accelerated Decentralized Optimization Method (SADOM)}
	\label{alg:SADOM}
	\begin{algorithmic}[1]
		\State {\bf input:} $x^0, y^0,m^0 \in (\R^d)^\cV$, $z^0 \in \cL^\perp$
		\State $x_f^0 = x^0$, $y_f^0 = y^0$, $z_f^0 = z^0$
	   \For{$k = 0,1,2,\ldots$}
		\State $x_g^k = \tau_1x^k + (1-\tau_1) x_f^k$\label{scary:line:x:1}
		\State $x^{k+1} = x^k + \eta\alpha(x_g^k - x^{k+1}) - \eta\left[\gg(x_g^k, \bxi^k) - \nu x_g^k - y^{k+1}\right] $\label{scary:line:x:2}
		\State $x_f^{k+1} = x_g^k + \tau_2 (x^{k+1} - x^k)$\label{scary:line:x:3}
		
		\State $y_g^k = \vartheta_1 y^k + (1-\vartheta_1)y_f^k$\label{scary:line:y:1}
		\State $y^{k+1} = y^k + \theta\beta (\gg(x_g^k, \bxi^k) - \nu x_g^k - y^{k+1}) -\theta\left[\nu^{-1}(y_g^k + z_g^k) + x^{k+1}\right]$\label{scary:line:y:2}
		\State $y_f^{k+1} = y_g^k + \vartheta_2 (y^{k+1} - y^k)$\label{scary:line:y:3}
		
		\State $z_g^k = \vartheta_1 z^k + (1-\vartheta_1)z_f^k$\label{scary:line:z:1}
		\State $z^{k+1} = z^k + \varkappa \pi(z_g^k - z^k) - (\mW(k)\otimes \mI_d)\left[\varkappa\nu^{-1}(y_g^k+z_g^k) + m^k\right]$\label{scary:line:z:2}
		\State $m^{k+1} = \varkappa\nu^{-1}(y_g^k+z_g^k) + m^k - (\mW(k)\otimes \mI_d)\left[\varkappa\nu^{-1}(y_g^k+z_g^k) + m^k\right]$\label{scary:line:m}
		\State $z_f^{k+1} = z_g^k - \zeta (\mW(k)\otimes \mI_d)(y_g^k + z_g^k)$\label{scary:line:z:3}
		\EndFor
	\end{algorithmic}
\end{algorithm}

Next, we assume that we have access to the gradient oracle and the variance is bounded.

\aw{
\begin{assumption}\label{ass:unbias_aw}
    For all $x \in \mathbb{R}^d$ we have gradient oracle:
    \begin{equation}
        \label{eq:unbias_aw}
        \expect{\gg(x, \bxi)} = \nabla F(x) + \boldsymbol{\omega}(x).
    \end{equation}

    And exists constant $\Delta \leq 0$ such that for all $x \in \mathbb{R}^d$
    \begin{equation}
        \label{eq:bound_delta_aw}
        \norms{\boldsymbol{\omega}(x)}^2 \leq \Delta^2.
    \end{equation}
\end{assumption}
}

\begin{assumption}\label{ass:variance}
    There exists constant $\sigma^2 \geq 0$, such that $\forall x \in \mathbb{R}^d$:
    \begin{equation}\label{eq:variance}
        \expect{\norms{\gg(x,\bxi) - \nabla F(x)}^2}\leq \sigma^2.
    \end{equation}
\end{assumption}
These Assumptions \ref{ass:unbias_aw} and \ref{ass:variance} on the stochastic gradient oracle are standard in the stochastic optimization literature \cite{Ghadimi_2012, Gorbunov_2020}. We can now present the convergence result of Stochastic Accelerated Decentralized Optimization Method (Algorithm \ref{alg:SADOM}).

\begin{theorem}[Convergence of SADOM] \label{th:SADOM}
    Let Assumptions \ref{ass:convex}-\ref{ass:gossip_matrix} be satisfied and Assumptions \ref{ass:unbias_aw}, \ref{ass:variance} on the gradient oracle be satisfied, then there exist such parameters $\tau_1, \tau_2, \eta, \alpha, \vartheta, \theta, \nu, \varkappa, \pi, \zeta$ and $\beta \leq 1/(2L)$ that the Stochastic Accelerated Decentralized Optimization Method (Algorithm \ref{alg:SADOM}) has the following convergence rate

    \aw{
    \begin{align*}
	\mathbb{E}\Bigg[ \sqn{x^{N} - x^*} &+ \frac{2}{\mu}\left(F(x_f^{N}) - F(x^*) -\frac{\mu}{4}\sqn{x_f^{N} - x^*} \right) \Bigg]
    \\
    &\leq
    \aw{\left(1 - \frac{\sqrt{\beta \mu}}{32 \chi}\right)^N} C_0 + 
    \aw{ \frac{64 \chi}{\sqrt{\mu^{3}}}\sigma^2 \sqrt{\beta}}
    \aw{ + \frac{128 \chi}{\sqrt{\beta L} \mu^2} \Delta^2}.
	\end{align*}}
 
\end{theorem}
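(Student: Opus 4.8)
The plan is to adapt the Lyapunov-function analysis of ADOM+ from \cite{Kovalev_2021} to the stochastic oracle, tracking how the bias $\boldsymbol{\omega}$ and the variance $\sigma^2$ propagate through the coupled primal--dual updates of Algorithm \ref{alg:SADOM}. I would work in the lifted space $\mathsf{E}$, using the monotone-inclusion reformulation \eqref{eq:AB0} and the optimality conditions \eqref{opt:x}--\eqref{opt:z} as the reference point $(x^*,y^*,z^*)$.

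First I would introduce a potential function of the form
\[
\Psi^k = \sqn{x^k - x^*} + \frac{2}{\mu}\Big(F(x_f^k) - F(x^*) - \frac{\mu}{4}\sqn{x_f^k - x^*}\Big) + \cD^k,
\]
where $\cD^k \ge 0$ collects the dual and consensus terms in $(y^k-y^*,\,z^k-z^*)$ together with the auxiliary variables $x_f^k,y_f^k,z_f^k,m^k$, measured in the weighted norms used in \cite{Kovalev_2021}. The step parameters $\tau_1,\tau_2,\eta,\alpha,\vartheta,\theta,\nu,\varkappa,\pi,\zeta$ and $\beta\le 1/(2L)$ are chosen exactly as in the ADOM+ analysis, so that the \emph{deterministic} part of the one-step recursion contracts at rate $1-\rho$ with $\rho = \sqrt{\beta\mu}/(32\chi)$. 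Since the gossip matrix $\mW(k)$ enters only the $z$- and $m$-updates (lines \ref{scary:line:z:2}--\ref{scary:line:z:3}) and is independent of the oracle noise, the consensus estimate based on property~4 of Assumption \ref{ass:gossip_matrix} is inherited verbatim from \cite{Kovalev_2021} and is what produces the factor $\chi$.

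The genuinely new step is the oracle decomposition at the query point $x_g^k$,
\[
\gg(x_g^k,\bxi^k) - \nabla F(x_g^k) = \big(\gg(x_g^k,\bxi^k) - \Ek{\gg(x_g^k,\bxi^k)}\big) + \boldsymbol{\omega}(x_g^k),
\]
where $\Ek{\cdot}$ is the conditional expectation given the history. When I expand $\sqn{x^{k+1}-x^*}$ and the analogous $y$-term, the oracle enters through inner products of the form $\dotprod{\gg(x_g^k,\bxi^k) - \nu x_g^k - y^{k+1}}{x^{k+1}-x^*}$. Applying $\Ek{\cdot}$, the zero-mean part annihilates the linear cross term and contributes only through its second moment, which is controlled by $\sigma^2$ via Assumption \ref{ass:variance}; the bias $\boldsymbol{\omega}(x_g^k)$ survives in the linear term, and I would split it by Young's inequality into a piece absorbed by the strong-convexity/contraction margin and a residual bounded by $\Delta^2$ via \eqref{eq:bound_delta_aw}. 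This yields a one-step inequality
\[
\Ek{\Psi^{k+1}} \le (1-\rho)\,\Psi^k + c_1\,\frac{\beta}{\mu}\,\sigma^2 + c_2\,\frac{1}{\sqrt{\beta L}\,\mu^{3/2}}\,\Delta^2
\]
for absolute constants $c_1,c_2$. Taking total expectations, unrolling, and using $\sum_{j=0}^{N-1}(1-\rho)^j \le 1/\rho$ with $\rho = \sqrt{\beta\mu}/(32\chi)$ converts the per-step variance and bias into the steady-state terms $\tfrac{64\chi}{\sqrt{\mu^3}}\sigma^2\sqrt{\beta}$ and $\tfrac{128\chi}{\sqrt{\beta L}\,\mu^2}\Delta^2$, while the initial potential $\Psi^0$ gives the geometric term $(1-\rho)^N C_0$; dropping the non-negative term $\cD^N$ on the left leaves exactly the quantity in the statement.

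I expect the main obstacle to be the \emph{implicit and coupled} updates in lines \ref{scary:line:x:2} and \ref{scary:line:y:2}, where both $x^{k+1}$ and $y^{k+1}$ appear on the right-hand sides and must first be solved from a small linear system before any inner product can be estimated. Because the single noisy value $\gg(x_g^k,\bxi^k)$ feeds into both solved expressions, the variance and bias enter $x^{k+1}$ and $y^{k+1}$ simultaneously, and I must verify that the primal--dual cross terms do not amplify the noise beyond the coefficients above; the restriction $\beta\le 1/(2L)$ is precisely what keeps these cross terms controlled. All remaining estimates are deterministic and carry over from the ADOM+ proof, so the technical effort is concentrated entirely in the handling of the stochastic term.
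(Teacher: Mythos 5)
Your proposal follows essentially the same route as the paper: the authors also run the ADOM+ Lyapunov analysis (split into a primal potential $\Psi_x^k$ and a dual/consensus potential $\Psi_{yz}^k$) with the same parameter choices, decompose the oracle error at $x_g^k$ into a zero-mean part whose second moment is absorbed via a Fenchel--Young step with parameter $\beta$ (yielding the $\sigma^2\beta$ contributions) and a bias part handled by Young's inequality against the strong-convexity margin (yielding $\tfrac{4}{\mu}\Delta^2$ per step), and then unroll the contraction at rate $1-\sqrt{\beta\mu}/(32\chi)$ with the geometric sum producing the two steady-state terms. Your flagged concern about the implicitly coupled $x^{k+1},y^{k+1}$ updates is exactly the point the paper resolves by first rewriting the noisy cross term against history-measurable quantities plus $x_f^{k+1}-x_g^k$ before applying the conditional expectation, so the proposal is correct and matches the paper's argument.
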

For a detailed proof of Theorem \ref{th:SADOM}, see Appendix \ref{proof_Theorem1}. The result of Theorem \ref{th:SADOM} shows the quite expected results for the stochastic algorithm, namely, the convergence of SADOM consists of \al{three} terms: the deterministic term 
\aw{$\left(1 - \frac{\sqrt{\beta \mu}}{32 \chi}\right)^N$}, which fully matches the convergence of ADOM+, the stochastic \aw{and bias} terms 
\aw{$\frac{\chi}{\mu \sqrt{L}}\sigma^2 \sqrt{\beta} \left( 1 + \sqrt{\frac{L}{\mu}} \right)$ and $\frac{\chi}{\sqrt{\beta L} \mu^2} \Delta^2$}, which can affect the final convergence in case the second \aw{or third} term dominates. This convergence result can be improved by adding a batch procedure. Then the convergence rate will be as follows. 
\begin{corollary}[SADOM with batching] \label{cor:SADOM}
    Let the conditions of Theorem \ref{th:SADOM} hold, then Stochastic Accelerated Decentralized Optimization Method with the gradient oracle 
    $\gg(x,\bxi) =  \frac{1}{B} \sum_{i=1}^B \gg(x,\bxi_i)$ has the following convergence~rate: 

    \aw{
    \begin{align*}
	\mathbb{E}\Bigg[ \sqn{x^{N} - x^*} &+ \frac{2}{\mu}\left(F(x_f^{N}) - F(x^*)-\frac{\mu}{4}\sqn{x_f^{N} - x^*} \right) \Bigg]
    \\
    &\leq
    \aw{\left(1 - \frac{\sqrt{\beta \mu}}{32 \chi}\right)^N} C_0 + 
    \aw{ \frac{64 \chi}{\sqrt{\mu^{3}}} \frac{\sigma^2}{B} \sqrt{\beta}}
    \aw{ + \frac{128 \chi}{\sqrt{\beta L} \mu^2} \Delta^2}.
	\end{align*}}

\end{corollary}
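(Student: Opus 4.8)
The plan is to derive the corollary directly from Theorem \ref{th:SADOM} by showing that the batched oracle $\gg(x,\bxi) = \frac{1}{B}\sum_{i=1}^B \gg(x,\bxi_i)$ (with $\bxi_1,\ldots,\bxi_B$ drawn i.i.d.) again satisfies Assumptions \ref{ass:unbias_aw} and \ref{ass:variance}, but with the bias constant $\Delta$ unchanged and the variance constant reduced from $\sigma^2$ to $\sigma^2/B$. Once this is established, substituting $\sigma^2 \mapsto \sigma^2/B$ into the bound of Theorem \ref{th:SADOM} yields exactly the claimed inequality, so no new convergence analysis is required.

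First I would verify that batching does not alter the bias. By linearity of expectation and the fact that the samples are identically distributed,
\begin{equation*}
    \expect{\frac{1}{B}\sum_{i=1}^B \gg(x,\bxi_i)} = \frac{1}{B}\sum_{i=1}^B \expect{\gg(x,\bxi_i)} = \nabla F(x) + \boldsymbol{\omega}(x),
\end{equation*}
so the bias function $\boldsymbol{\omega}(x)$ is identical to that of the single-sample oracle and $\norms{\boldsymbol{\omega}(x)}^2 \leq \Delta^2$ continues to hold. Hence Assumption \ref{ass:unbias_aw} is satisfied for the batched oracle with the same $\Delta$, and the third term $\frac{128\chi}{\sqrt{\beta L}\mu^2}\Delta^2$ is unaffected by batching.

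Next I would quantify the variance reduction. Introducing the centered per-sample noise $\ee_i = \gg(x,\bxi_i) - \expect{\gg(x,\bxi_i)}$, the variables $\ee_1,\ldots,\ee_B$ are zero-mean and mutually independent, and from Assumption \ref{ass:variance} one obtains $\expect{\norms{\ee_i}^2} \leq \expect{\norms{\gg(x,\bxi_i) - \nabla F(x)}^2} \leq \sigma^2$. Because the cross terms $\expect{\dotprod{\ee_i}{\ee_j}} = 0$ vanish for $i \neq j$ by independence,
\begin{equation*}
    \expect{\norms{\frac{1}{B}\sum_{i=1}^B \gg(x,\bxi_i) - \nabla F(x) - \boldsymbol{\omega}(x)}^2} = \frac{1}{B^2}\sum_{i=1}^B \expect{\norms{\ee_i}^2} \leq \frac{\sigma^2}{B}.
\end{equation*}
Thus the genuine zero-mean fluctuation of the batched estimator is at most $\sigma^2/B$, which is precisely the quantity Theorem \ref{th:SADOM} controls through its $\sigma^2$-term; replacing $\sigma^2$ by $\sigma^2/B$ produces the middle term $\frac{64\chi}{\sqrt{\mu^3}}\frac{\sigma^2}{B}\sqrt{\beta}$.

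The main point requiring care is the clean separation of the bias and variance contributions. The noise must be centered around the biased mean $\expect{\gg(x,\bxi_i)} = \nabla F(x) + \boldsymbol{\omega}(x)$ rather than around $\nabla F(x)$; otherwise the cross terms leave a residual $\frac{B-1}{B}\norms{\boldsymbol{\omega}(x)}^2$ that would contaminate the variance estimate and incorrectly scale the bias with $B$. Since the proof of Theorem \ref{th:SADOM} already tracks the systematic bias $\boldsymbol{\omega}$ and the stochastic fluctuation through two distinct terms, only the fluctuation is divided by $B$, leaving the $\Delta^2$-term intact. Feeding the reduced variance $\sigma^2/B$ and the unchanged bias $\Delta$ back into Theorem \ref{th:SADOM} then completes the argument.
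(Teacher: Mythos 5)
Your proposal is correct and takes essentially the same route as the paper, whose entire justification for this corollary is the one-line observation that batching reduces the variance $\sigma^2$ by a factor of $B$ while Theorem \ref{th:SADOM} is otherwise applied unchanged. Your write-up is in fact more careful than the paper's: you explicitly verify via linearity of expectation that the bias $\boldsymbol{\omega}(x)$ (and hence the $\Delta^2$-term) is unaffected, and you correctly center the per-sample noise at the biased mean so that the independence argument yields the clean $\sigma^2/B$ factor.
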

Corollary \ref{cor:SADOM} is correct, because when paralleling (batching) the variance $\sigma^2$ is reduced by a factor of $B$, where $B$ is the batch size. 
\aw{The convergence estimate from Theorem \ref{th:SADOM} contains a stochastic term, the influence of which can be reduced by the $\beta$ parameter, thus making it as small as possible. By careful tuning of the $\beta$ parameter, one can obtain convergence results similar to \cite{Stich_2019}.
}

\aw{
\begin{corollary}[$\beta$ tuning]
\label{cor:beta_aw}
    Let the conditions of Theorem \ref{th:SADOM} hold, then it is possible to choose the value of $\beta$ in such a way that the following convergence is realised
    
    \begin{align*}
        \mathbb{E}\Bigg[ \frac{\mu}{2}\sqn{x^{N} - x^*} &+ F(x_f^{N}) - F(x^*) -\frac{\mu}{4}\sqn{x_f^{N} - x^*} \Bigg]
        \\
        &= \widetilde{\mathcal{O}} \left( \hat C_0 \exp\left[-\frac{\sqrt{\mu} N}{32 \sqrt{2} \chi \sqrt{L}}\right] + \frac{\chi^2 \sigma^2}{B \mu N} + \frac{\Delta^2 N}{\sqrt{L} \mu^{1/2}} \right).
    \end{align*}

\end{corollary}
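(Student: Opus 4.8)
The plan is to reduce the claim to the batched estimate of Corollary~\ref{cor:SADOM} and then run a horizon-dependent $\beta$-tuning argument of the type used in~\cite{Stich_2019}. Write $\Psi_N\coloneqq\mathbb{E}\bigl[\tfrac{\mu}{2}\sqn{x^{N}-x^*}+F(x_f^{N})-F(x^*)-\tfrac{\mu}{4}\sqn{x_f^{N}-x^*}\bigr]$ for the left-hand side. The first step is purely algebraic: $\Psi_N$ is exactly $\tfrac{\mu}{2}$ times the left-hand side of Theorem~\ref{th:SADOM}, since $\tfrac{\mu}{2}\cdot\tfrac{2}{\mu}\bigl(F(x_f^{N})-F(x^*)-\tfrac{\mu}{4}\sqn{x_f^{N}-x^*}\bigr)=F(x_f^{N})-F(x^*)-\tfrac{\mu}{4}\sqn{x_f^{N}-x^*}$ and the term $\tfrac{\mu}{2}\sqn{x^{N}-x^*}$ is reproduced verbatim.

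Multiplying the batched bound of Corollary~\ref{cor:SADOM} by $\tfrac{\mu}{2}$, setting $\gamma\coloneqq\sqrt{\beta}$, $\hat C_0\coloneqq\tfrac{\mu}{2}C_0$, and collapsing constants, I obtain, for every $\gamma\le\gamma_{\max}\coloneqq 1/\sqrt{2L}$ (the admissible range forced by $\beta\le 1/(2L)$),
\begin{equation*}
  \Psi_N\le\hat C_0\Bigl(1-\tfrac{\sqrt{\mu}}{32\chi}\gamma\Bigr)^{N}+b\gamma+\frac{c}{\gamma},\qquad b=\frac{32\chi\sigma^2}{\sqrt{\mu}\,B},\quad c=\frac{64\chi\Delta^2}{\mu\sqrt{L}}.
\end{equation*}
Using $(1-u)^N\le e^{-uN}$ turns the geometric factor into $\hat C_0\exp(-a\gamma N)$ with $a=\tfrac{\sqrt{\mu}}{32\chi}$; at $\gamma=\gamma_{\max}$ the exponent is exactly $\tfrac{\sqrt{\mu}N}{32\sqrt{2}\chi\sqrt{L}}$, the decay advertised in the statement.

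The heart of the argument is the choice of $\gamma$ (equivalently $\beta$). I would take the capped, horizon-dependent value
\begin{equation*}
  \gamma=\min\left\{\frac{1}{\sqrt{2L}},\ \frac{\ln\bigl(\max\{2,\,a^2N^2\hat C_0/b\}\bigr)}{aN}\right\},
\end{equation*}
and split into two regimes. When the cap is active ($\gamma=\gamma_{\max}$, the small-$N$ regime) the exponential reproduces the target first term $\hat C_0\exp(-\tfrac{\sqrt{\mu}N}{32\sqrt{2}\chi\sqrt{L}})$, while $b\gamma_{\max}$ is dominated by $\tfrac{b}{aN}$ since there $\gamma_{\max}\le\ln(\cdots)/(aN)$. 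When the cap is inactive ($\gamma=c'\ln N/(aN)$, the large-$N$ regime) the exponential decays polynomially as $\hat C_0N^{-c'}$ and is absorbed into the variance term, the variance contribution becomes $b\gamma=\widetilde{\mathcal O}(b/(aN))=\widetilde{\mathcal O}\bigl(\chi^2\sigma^2/(B\mu N)\bigr)$ using $b/a=1024\,\chi^2\sigma^2/(\mu B)$, and the bias contribution becomes $c/\gamma=\widetilde{\mathcal O}(caN)=\widetilde{\mathcal O}\bigl(\Delta^2N/(\sqrt{L}\mu^{1/2})\bigr)$ using $ca=2\Delta^2/\sqrt{\mu L}$. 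Summing the dominant contributions of the two regimes yields the stated $\widetilde{\mathcal O}$ bound; in particular the growth $\propto N$ of the bias term is the automatic price of shrinking $\gamma\propto 1/N$ to annihilate the variance floor, exactly as in the step-size schedules of~\cite{Stich_2019}.

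The main obstacle I anticipate is the bookkeeping of the additive $c/\gamma$ bias term across the two regimes, which is \emph{absent} from the standard tuning lemma of~\cite{Stich_2019}. Concretely, one must verify that the polynomial remainder $\hat C_0N^{-c'}$ of the exponential is genuinely dominated by the $1/N$ variance term (this fixes the logarithmic constant $c'$ and produces the tilde), and that the constraint $\beta=\gamma^2\le 1/(2L)$ is respected, which holds by construction through the cap at $\gamma_{\max}$. A secondary delicate point is matching constants at the transition $N_*\asymp\chi\sqrt{L/\mu}$, where $c/\gamma_{\max}\asymp\chi\Delta^2/\mu$ meets $\Delta^2N_*/\sqrt{\mu L}$; for $N\ll N_*$ the bias contribution $c/\gamma_{\max}$ is a constant exceeding the advertised $\Delta^2N/(\sqrt{L}\mu^{1/2})$, so the bound is genuinely informative only in the intended regime $N\gtrsim N_*$, which the $\widetilde{\mathcal O}$ presentation tacitly encodes. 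Formalizing this amounts to stating and proving an auxiliary extension of the Stich tuning lemma that carries the inverse-step-size term, after which one simply plugs in the explicit $a,b,c$ above.
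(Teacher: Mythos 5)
Your proposal follows essentially the same route as the paper's proof: reduce to the rate bound of Theorem~\ref{th:SADOM} (with batching), substitute $\sqrt{\beta}$ as the tuning variable, apply the capped two-case Stich-style choice $\sqrt{\beta}=\min\{1/\sqrt{2L},\,\ln(\cdot)/(aN)\}$, and track the three terms via $b/a$ and $ca$, whose values you compute correctly. The only difference is cosmetic (you multiply by $\mu/2$ at the outset rather than at the end), and your explicit caveat about the $c/\gamma$ bias term in the capped regime is a point the paper glosses over rather than a gap in your argument.
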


For a detailed proof of Corollary \ref{cor:beta_aw}, see Appendix \ref{proof_Theorem1}.
}

\section{Zero-Order Methods}\label{sec:Zero-Order Methods_aw}

In this section, we present the main result of this work. Namely, a gradient-free optimization algorithm solving a non-smooth \aw{stochastic} decentralized optimization problem on time-varying graphs. First, let us formally reformulate the optimization problems \aw{\eqref{eq:lifting_problem}, \eqref{eq:stoch_init_problem}}:
\begin{equation}\label{eq:nonsmooth_init_problem_aw}
    \min_{x \in Q \subseteq \mathcal{L}} F(x) := \mathbb{E}_{\aw{\xi}}\left[ \sum_{i \in \mathcal{V}} f_i(x_i, \aw{\xi}) \right],
\end{equation}
where $F: \aw{ \left( \mathbb{R}^d \right)^{\mathcal{V}} } \rightarrow \mathbb{R}$ is a non-smooth strongly convex. It follows from the fact that each function $f_i$ is non-smooth \aw{strongly} convex function. Then our approach to solving this problem is to create a gradient-free algorithm using \aw{different smoothing schemes} (see details below) and based on Stochastic Accelerated Decentralized Optimization Method (\aw{the ADOM+ algorithm is not suitable as a basis for a gradient-free algorithm, because approximation of the gradient $\gg(x^k, \bxi^k)$ from line \ref{scary:line:x:2} will create stochasticity, but the batched SADOM algorithm leads to optimal estimates of the ADOM+ algorithm}). Now, before presenting the main convergence results of this paper, let us define additional assumptions and definitions of the gradient-free oracle and the gradient approximations for new algorithm.
    \begin{definition}[Gradient-free oracle]\label{def:GFOracle_aw}
        Gradient-free oracle returns a function value $F(x, \aw{\xi})$ at the requested point $x$ with some adversarial noise $\delta$, i.e. for all $x \in \aw{ \left( \mathbb{R}^d \right)^{\mathcal{V}} }$
        \begin{equation*}
            F_\delta(x, \aw{\xi}) := F(x, \aw{\xi}) + \delta(x, \aw{\xi}).
        \end{equation*}
    \end{definition}
    \begin{assumption}[Lipschitz continuity of objective function]\label{ass:Lipschitz_continuity_aw}
        The function $F(x, \xi)$ is an $M_2$ Lipschitz continuous function in the $l_2$-norm, i.e for all $x, y \in \aw{ \left( \mathbb{R}^d \right)^{\mathcal{V}} }$ we have
        \begin{equation*}
            |F(y, \aw{\xi}) - F(x, \aw{\xi})| \leq M_2(\aw{\xi}) \| y-x \|.
        \end{equation*}
        \aw{Moreover, there is a positive constant $M_2$, which is defined as follows
        \begin{equation*}
            \mathbb{E}_{\xi}\left[ \left| M_2(\xi)^2 \right| \right] \leq M_2^2
        \end{equation*}}
    \end{assumption}
    
     We also assume that adversarial noise is bounded.
    \begin{assumption}[Boundedness of noise]\label{ass:bound_noise_aw} For all $x \in \aw{ \left( \mathbb{R}^d \right)^{\mathcal{V}} }$ and for all $\xi$, it holds $|\delta(x, \xi)| \leq \widetilde{\Delta}$.
    \end{assumption}

    Since problem \eqref{eq:nonsmooth_init_problem_aw} is non-smooth, \aw{we must smooth the original function $F$, and apply our Algorithm \ref{alg:SADOM} to this approximation. The smoothed function $F$ is presented below}.
    \begin{equation}\label{eq:f_gamma_aw}
        F_\gamma(x) := \mathbb{E}_{\tilde{e}, \xi} \left[ F(x + \gamma \tilde{e}, \aw{\xi}) \right],
    \end{equation}
    where $\gamma > 0$ is smoothing parameter, $\tilde{e}$ is random vector uniformly distributed on a $d$-dimensional Euclidean ball $B_2^d(\aw{1})$. The following lemma provides the connection between non-smooth function $F$ and smoothed function $F_\gamma$.

    \aw{\begin{lemma}[Lemma 1, \cite{Lobanov_2022}]\label{Lemma:connect_f_with_f_gamma_aw}
        For the $\mu$ strongly convex function $F$ the smoothed function $F_{\gamma}$ is also $\mu$-strongly convex. Moreover, let Assumptions \ref{ass:Lipschitz_continuity_aw} and \ref{ass:bound_noise_aw}, holds, then for all $x \in \left( \mathbb{R}^d \right)^{\mathcal{V}}$ we have
        \begin{equation*}
            F(x)\leq F_\gamma(x) \leq F(x) + \gamma M_2.
        \end{equation*}
        Function $F_{\gamma}$ is $M_2$-Lipschitz continuous function in the $l_2$-norm:
        \begin{equation*}
            |F_\gamma(y) - F_\gamma(x) | \leq M_2 \| y - x \| \;\;\; \text{ for all } x,y \in \left( \mathbb{R}^d \right)^{\mathcal{V}}.
        \end{equation*}
        $F_\gamma(x)$ has $L_{F_\gamma} = \frac{\sqrt{d}M_2}{\gamma}$-Lipschitz gradient
        \begin{equation*}
            \| \nabla F_\gamma(y) - \nabla F_\gamma(x) \| \leq L_{f_{\gamma}} \| y - x \| \;\;\; \text{ for all } x,y \in \left( \mathbb{R}^d \right)^{\mathcal{V}}.
        \end{equation*}
    \end{lemma}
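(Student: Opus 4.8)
The plan is to prove the four claims separately, in each case transferring a property of $F$ to its ball-average by working with the convolution representation $F_\gamma(x)=\mathbb{E}_{\tilde e}\left[F(x+\gamma\tilde e)\right]$, where $F(\cdot):=\mathbb{E}_\xi[F(\cdot,\xi)]$ is convex and $M_2$-Lipschitz (the latter because $\mathbb{E}_\xi[M_2(\xi)]\le\sqrt{\mathbb{E}_\xi[M_2(\xi)^2]}\le M_2$ by Jensen), and $\tilde e$ is uniform on $B_2^d(1)$ with $\mathbb{E}[\tilde e]=0$. Passing from the stochastic $F(\cdot,\xi)$ to the expected $F(\cdot)$ first, via Fubini, is what lets me use only convexity and Lipschitzness of the \emph{averaged} objective.

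Strong convexity is the easiest: for each fixed $\tilde e$ the map $x\mapsto F(x+\gamma\tilde e)$ is $\mu$-strongly convex, since shifting the argument preserves the modulus, and $F_\gamma$ is a convex combination (expectation) of such maps, so it keeps the same $\mu$. For the two-sided bound, the lower estimate follows from Jensen's inequality applied to the convex $F$ together with $\mathbb{E}[\tilde e]=0$, giving $F_\gamma(x)\ge F\left(\mathbb{E}[x+\gamma\tilde e]\right)=F(x)$; the upper estimate follows from Lipschitz continuity (Assumption \ref{ass:Lipschitz_continuity_aw}),
\begin{equation*}
F_\gamma(x)-F(x)=\mathbb{E}_{\tilde e}\left[F(x+\gamma\tilde e)-F(x)\right]\le M_2\,\gamma\,\mathbb{E}\left[\norms{\tilde e}\right]\le \gamma M_2,
\end{equation*}
since $\norms{\tilde e}\le 1$. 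The same Lipschitz bound applied to a common shift of two points yields $|F_\gamma(y)-F_\gamma(x)|\le M_2\norms{y-x}$, which is the third claim.

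The main obstacle is the smoothness estimate, and the delicate point there is to obtain the sharp $\sqrt d$ rather than the naive $d$ dependence. First I would invoke the Stokes/divergence identity for $l_2$-smoothing, $\nabla F_\gamma(x)=\frac{d}{\gamma}\,\mathbb{E}_e\left[F(x+\gamma e)\,e\right]$, where $e$ is now uniform on the sphere $S_2^d(1)$; this representation produces a genuine gradient even though $F$ is non-smooth, and converts the ball average into a surface integral. To bound $\norms{\nabla F_\gamma(y)-\nabla F_\gamma(x)}$ I would test against an arbitrary unit vector $u$ and estimate
\begin{equation*}
\langle\nabla F_\gamma(y)-\nabla F_\gamma(x),u\rangle=\frac{d}{\gamma}\,\mathbb{E}_e\left[\left(F(y+\gamma e)-F(x+\gamma e)\right)\langle e,u\rangle\right]\le\frac{\sqrt d\,M_2}{\gamma}\norms{y-x}.
\end{equation*}
The inequality is Cauchy--Schwarz in $e$: the factor $\mathbb{E}_e[(F(y+\gamma e)-F(x+\gamma e))^2]\le M_2^2\norms{y-x}^2$ comes from Lipschitzness, while the symmetry of the uniform sphere gives the second-moment identity $\mathbb{E}_e[\langle e,u\rangle^2]=1/d$, and the two factors combine as $\frac{d}{\gamma}\cdot M_2\norms{y-x}\cdot\frac{1}{\sqrt d}$. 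Taking the supremum over unit $u$ gives $L_{F_\gamma}=\frac{\sqrt d\,M_2}{\gamma}$. I expect the verification of the divergence identity, and keeping track of the joint expectation over $(\xi,e)$, to be the only non-routine parts; every remaining step reduces to Jensen and Cauchy--Schwarz once the representation is in hand.
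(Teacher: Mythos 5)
Your proof is correct and follows the standard route: the paper itself does not reprove this lemma (it imports it verbatim from its reference \cite{Lobanov_2022}), and your reconstruction — averaging to transfer strong convexity and Lipschitzness, Jensen for the lower bound, the divergence-theorem representation $\nabla F_\gamma(x)=\frac{d}{\gamma}\,\mathbb{E}_e[F(x+\gamma e)e]$ plus Cauchy--Schwarz with $\mathbb{E}_e[\langle e,u\rangle^2]=1/d$ for the $\sqrt{d}M_2/\gamma$ smoothness constant — is exactly the argument behind the cited result, down to the second-moment fact the paper lists in its auxiliary section as inequality \eqref{Concentration_measure}. No gaps; the only caveat worth noting is that the ambient dimension here is formally $nd$ rather than $d$, but you are consistent with the paper's own notational convention.
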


    To apply Algorithm \ref{alg:SADOM} to the smoothed function $F_{\gamma}$ \eqref{eq:f_gamma_aw} we need to approximate its gradient using the original function. We must replace the exact gradient $\gg(x, \xi)$ from line \ref{scary:line:x:2} with its approximation $\gg(x, \xi, e)$. In this paper, we will look at 3 types of $\gg(x, \xi, e)$.

    Two-point feedback \cite{Gasnikov_2022, Lobanov_2022, Duchi_2015, Dvurechensky_2021, Gasnikov_2017, Nesterov_2017}:
    \begin{equation}\label{eq:tpf_aw}
        \gg(x, \xi, e) = \frac{d}{2 \gamma} \left( F_\delta(x+ \gamma e, \xi) - F_\delta(x - \gamma e, \xi) \right) e.
    \end{equation}

    One-point feedback via single realization of $\xi$ \cite{Gasnikov_2022, Lobanov_2022}:
    \begin{equation}\label{eq:opf_1_aw}
        \gg(x, \xi, e) = \frac{d}{\gamma} \left( F_\delta(x + \gamma e, \xi) \right) e.
    \end{equation}

    One-point feedback via double realization of $\xi$ \cite{Tsybakov_2020, Gasnikov_2017, Novitskii_2021, Stepanov_2021}:
    \begin{equation}\label{eq:opf_2_aw}
        \gg(x, \xi^+, \xi^-, e) = \frac{d}{2 \gamma} \left( F_\delta(x+ \gamma e, \xi^{+}) - F_\delta(x - \gamma e, \xi^{-}) \right) e.
    \end{equation}

    Here $F_\delta(x, \xi)$ is gradient-free oracle from Definition \ref{def:GFOracle_aw}, $e$ is a random vector uniformly distributed on a $d$-dimensional Euclidean sphere $S_2^d(1)$.

    The key difference between approximations \eqref{eq:tpf_aw} and \eqref{eq:opf_2_aw} is that Scheme \eqref{eq:tpf_aw} is more accurate, but it is difficult to implement in practice because we have to get the same realization of $\xi$ at two different points $x + \gamma e$ and $x - \gamma e$, then Scheme \eqref{eq:opf_2_aw} is more interesting from a practical point of view.  Scheme \eqref{eq:opf_1_aw} uses to compute $\gg(x, \xi, e)$ only one call of the $F_{\delta}$ function, which makes it faster than the other approximations.}

    \aw{We can also apply the batching technique to all three gradient approximations:

    \begin{equation*}
        \gg(x, \xi^1, ... , \xi^B, e^1, ... , e^B) = \frac{1}{B}\sum_{i=1}^B \gg(x, \xi^i, e^i),
    \end{equation*}
    where $B$ is the batch size and $e^1, ... , e^B$ are independent and chosen with equal probability from $S_2^d(1)$.}

    \aw{Then to obtain a gradient-free algorithm: Zero-Order Stochastic Accelerated Decentralized Optimization Method, we need to apply Stochastic Accelerated Decentralized Optimization Method (Algorithm \ref{alg:SADOM}) to new smoothed problem 
    \begin{equation}\label{eq:smoothed_problem}
        \min_{x \in Q \subseteq \mathcal{L}} F_{\gamma}(x),
    \end{equation}
    using one of the gradient approximations \eqref{eq:tpf_aw}, \eqref{eq:opf_1_aw}, \eqref{eq:opf_2_aw} or their batched versions we should substitute them in instead of $\gg(x_g^k, \xi)$ in the SADOM algorithm in line \ref{scary:line:x:2}. If we want to get en   $\varepsilon$-solution to the initial problem \eqref{eq:nonsmooth_init_problem_aw}, i.e. $|F(x) -  F(x^*)| \leq \varepsilon$, then, according to Lemma \ref{Lemma:connect_f_with_f_gamma_aw}, using $\gamma = \varepsilon / (2 M_2)$, an $(\varepsilon/2)$-solution to \eqref{eq:smoothed_problem} will be an $\varepsilon$-solution to the initial problem \eqref{eq:nonsmooth_init_problem_aw}. This approach is similar to \cite{Gasnikov_2022}.

    For further estimates of convergence we need to introduce the following assumption

    \begin{assumption}[Boundedness of the function $F(x, \xi)$ on $Q$]\label{ass:bound_func_aw}
        For all $x \in Q$, it holds $\mathbb{E}_{\xi} \left[ \left| F(x, \xi) \right|^2 \right] \leq G^2$.
    \end{assumption}

    The following lemmas will help us in estimating the convergence of the method ZO-SADOM when using different ways of approximating the gradient.
    
    \begin{lemma}[Lemma 2, 3 from \cite{Lobanov_2022}, Lemma 3.1 from \cite{Stepanov_2021}]\label{lemma:bounded_variance_aw}
        It is true that 

        \begin{equation}
            \mathbb{E}_{e, \xi}\left[ \|\gg(x, \xi, e)\|^2_2 \right] \leq \widetilde{\sigma}^2,
        \end{equation}
        where
        \begin{enumerate}
            \item[$\bullet$] If Assumptions \ref{ass:Lipschitz_continuity_aw}, \ref{ass:bound_noise_aw} are hold for two-point feedback \eqref{eq:tpf_aw}:
            
            $$\widetilde{\sigma}^2 = 2\sqrt{2} d \left(M_2^2 + \frac{d \widetilde{\Delta}^2}{\sqrt{2}\gamma^2}\right)$$

            \item[$\bullet$] If Assumptions \ref{ass:bound_func_aw}, \ref{ass:bound_noise_aw} are hold for one-point feedback via single realization of $\xi$ \eqref{eq:opf_1_aw}:

            $$\widetilde{\sigma}^2 = 2d^2 \left(\frac{G^2}{\gamma^2} + \frac{\widetilde{\Delta}^2}{\gamma^2}\right)$$

            \item[$\bullet$] If Assumptions \ref{ass:Lipschitz_continuity_aw}, \ref{ass:bound_noise_aw} are hold for one-point feedback via double realization of $\xi$ \eqref{eq:opf_2_aw}:

            $$\widetilde{\sigma}^2 = 3d^2 \left(3M_2^2 + \frac{2d\widetilde{\Delta}^2}{\gamma^2}\right)$$

        \end{enumerate}
    \end{lemma}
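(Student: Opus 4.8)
The plan is to treat the three estimators separately but with a common template. In every case I exploit that $e$ lies on the unit sphere $S_2^d(1)$, so $\norms{e}^2 = 1$ and the squared norm of the estimator collapses to a scalar finite difference multiplied by a factor of the form $d^2/(c\gamma^2)$. I then split the gradient-free oracle $F_\delta$ into its true value $F$ and the adversarial perturbation $\delta$ (Definition \ref{def:GFOracle_aw}), apply $(a+b)^2 \le 2a^2 + 2b^2$, bound the noise contribution by $\widetilde{\Delta}$ (Assumption \ref{ass:bound_noise_aw}), and bound the function contribution by either Lipschitz continuity (Assumption \ref{ass:Lipschitz_continuity_aw}) or boundedness of $F$ (Assumption \ref{ass:bound_func_aw}), finishing by taking $\mathbb{E}_\xi$ and invoking $\mathbb{E}_\xi[M_2(\xi)^2]\le M_2^2$ or $\mathbb{E}_\xi[|F(x,\xi)|^2]\le G^2$.

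For the one-point single-realization scheme \eqref{eq:opf_1_aw} this is immediate and essentially routine: since $\norms{e}=1$ we get $\norms{\gg(x,\xi,e)}^2 = \tfrac{d^2}{\gamma^2}F_\delta(x+\gamma e,\xi)^2 \le \tfrac{2d^2}{\gamma^2}\left(F(x+\gamma e,\xi)^2 + \widetilde{\Delta}^2\right)$, and taking $\mathbb{E}_{e,\xi}$ with Assumption \ref{ass:bound_func_aw} yields exactly $2d^2\left(G^2/\gamma^2 + \widetilde{\Delta}^2/\gamma^2\right)$.

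For the two-point scheme \eqref{eq:tpf_aw} the noise term is handled identically: $(\delta(x+\gamma e,\xi) - \delta(x-\gamma e,\xi))^2 \le 4\widetilde{\Delta}^2$ produces the $2d^2\widetilde{\Delta}^2/\gamma^2$ contribution. The delicate step, and the first main obstacle, is the function term. The crude Lipschitz estimate $|F(x+\gamma e,\xi) - F(x-\gamma e,\xi)| \le 2\gamma M_2(\xi)$ only gives order $\mathcal{O}(d^2 M_2^2)$, whereas the claimed bound carries the sharper $\mathcal{O}(d M_2^2)$ scaling. Recovering the extra factor $d$ requires integrating the squared directional finite difference over the sphere rather than bounding it pointwise: along a uniformly random direction the directional slope concentrates, and using $\mathbb{E}_e[\langle v, e\rangle^2] = \norms{v}^2/d$ together with the gradient-Lipschitz property $L_{F_\gamma} = \tfrac{\sqrt{d}M_2}{\gamma}$ from Lemma \ref{Lemma:connect_f_with_f_gamma_aw} yields the improvement. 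This is precisely the content of Lemma 2/3 of \cite{Lobanov_2022}, and propagating the constants through the sphere average produces the factor $2\sqrt{2}$.

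The double-realization scheme \eqref{eq:opf_2_aw} is the hardest case, since the two evaluations use independent samples $\xi^+,\xi^-$, so Lipschitz continuity, which compares values at different points for the \emph{same} realization, cannot be applied directly to $F(x+\gamma e,\xi^+) - F(x-\gamma e,\xi^-)$. I would insert a common reference point and write the difference as $[F_\delta(x+\gamma e,\xi^+) - F_\delta(x-\gamma e,\xi^+)] + [F_\delta(x-\gamma e,\xi^+) - F_\delta(x-\gamma e,\xi^-)]$; the first bracket is controlled by Lipschitz continuity as before, while the second, a discrepancy between two independent realizations at a common point, must be controlled in expectation through the independence of $\xi^+$ and $\xi^-$. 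Because no cancellation across the random direction survives, the function part retains the $d^2$ scaling, and with the constants of \cite{Stepanov_2021} the bound becomes $3d^2\left(3M_2^2 + 2d\widetilde{\Delta}^2/\gamma^2\right)$. I expect the most error-prone bookkeeping to be verifying that this cross/variance term from the two independent realizations is indeed dominated by $M_2^2$ under the stated assumptions; this, rather than the routine noise estimates, is where the proof must be carried out with care.
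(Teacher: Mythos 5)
First, a point of comparison: the paper supplies no proof of this lemma at all --- it is imported verbatim from \cite{Lobanov_2022} and \cite{Stepanov_2021} --- so your sketch can only be judged on its own merits. Your treatment of the single-realization one-point case \eqref{eq:opf_1_aw} is complete and correct, and your noise bookkeeping for \eqref{eq:tpf_aw} (yielding $2d^2\widetilde{\Delta}^2/\gamma^2$) matches the stated constant. However, for the function part of the two-point case the mechanism you propose does not actually deliver the factor-$d$ improvement you correctly identify as the crux. If you linearize using the smoothed gradient and control the remainder by $L_{F_\gamma}=\sqrt{d}M_2/\gamma$ from Lemma \ref{Lemma:connect_f_with_f_gamma_aw}, that remainder is of size $L_{F_\gamma}\gamma^2=\sqrt{d}M_2\gamma$, and after multiplication by $d^2/(4\gamma^2)$ its square contributes $O(d^3M_2^2)$ --- strictly worse than the naive $O(d^2M_2^2)$ bound. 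The argument that actually works (and is the content of Lemma 2 of \cite{Lobanov_2022}) is a Poincar\'e-type concentration inequality on the sphere: the map $e\mapsto F(x+\gamma e,\xi)$ is $\gamma M_2(\xi)$-Lipschitz, hence its variance over $e$ is at most $c\gamma^2M_2(\xi)^2/d$; centering both evaluations at the common mean $\mathbb{E}_e[F(x+\gamma e,\xi)]$ (equal for $\pm\gamma e$ by symmetry) and applying $(a-b)^2\le 2a^2+2b^2$ then gives the $2\sqrt{2}\,dM_2^2$ term. The listed fact \eqref{Concentration_measure}, which concerns $\langle s,e\rangle$ for a \emph{fixed} vector $s$, is not by itself the right tool.

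The double-realization case \eqref{eq:opf_2_aw} contains the genuine gap. Your decomposition leaves the term $F_\delta(x-\gamma e,\xi^+)-F_\delta(x-\gamma e,\xi^-)$, and the worry you voice about it is exactly right: under Assumptions \ref{ass:Lipschitz_continuity_aw} and \ref{ass:bound_noise_aw} alone, a difference of two \emph{independent} realizations at a \emph{common} point is not controlled by $M_2$ in any way. A counterexample: $F(x,\xi)=\xi$ with $\xi=\pm R$ equiprobably makes every realization $0$-Lipschitz ($M_2=0$), yet $\mathbb{E}[(F(x,\xi^+)-F(x,\xi^-))^2]=2R^2$ is arbitrarily large, so the claimed bound $9d^2M_2^2+6d^3\widetilde{\Delta}^2/\gamma^2$ cannot follow from the hypotheses you invoke. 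Closing this requires an extra ingredient --- either Assumption \ref{ass:bound_func_aw} (which would introduce a $G^2/\gamma^2$ term as in the single-realization case) or an additive-noise/bounded-variance structure on $\xi$ as in \cite{Stepanov_2021}. So: case two is done, case one needs the correct concentration lemma rather than the smoothness of $F_\gamma$, and case three is not provable as you have set it up (nor, arguably, under the lemma's own stated hypotheses).
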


    This Lemma allows us to say that Assumption \ref{ass:variance} is fulfilled for all gradient approximation methods \eqref{eq:tpf_aw} - \eqref{eq:opf_2_aw}, since according to \cite{Gasnikov_2022} the following inequality is true: $\sigma^2 \leq \widetilde{\sigma}^2$.

    \begin{lemma}\label{Lemma:deltas_aw}
        Let Assumption \ref{ass:bound_noise_aw} holds, then for all approximations of gradient Assumption \ref{ass:unbias_aw} is fulfilled with

        \begin{equation*}
            \Delta = \frac{d \widetilde{\Delta}}{\gamma}
        \end{equation*}        
        
    \end{lemma}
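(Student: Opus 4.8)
The plan is to show that for each of the three approximations \eqref{eq:tpf_aw}--\eqref{eq:opf_2_aw} the expectation of $\gg$ equals the gradient of the smoothed function $\nabla F_\gamma(x)$ plus an error term driven entirely by the adversarial noise $\delta$, and then to bound the norm of that error by $d\widetilde{\Delta}/\gamma$. Since Algorithm \ref{alg:SADOM} is applied to the smoothed problem \eqref{eq:smoothed_problem}, Assumption \ref{ass:unbias_aw} is to be read with $\nabla F_\gamma$ in place of $\nabla F$ in \eqref{eq:unbias_aw}, so that $\boldsymbol{\omega}(x) := \mathbb{E}_{e,\xi}[\gg(x,\xi,e)] - \nabla F_\gamma(x)$.

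First I would invoke the standard $l_2$-smoothing identity: for $e$ uniform on $S_2^d(1)$,
\[
\nabla F_\gamma(x) = \mathbb{E}_{e,\xi}\!\left[\frac{d}{\gamma}F(x+\gamma e,\xi)e\right] = \mathbb{E}_{e,\xi}\!\left[\frac{d}{2\gamma}\bigl(F(x+\gamma e,\xi)-F(x-\gamma e,\xi)\bigr)e\right],
\]
where the second equality uses $\mathbb{E}_\xi[F(\cdot,\xi)]=F(\cdot)$ together with the symmetry $e\mapsto -e$. This is precisely the statement that each estimator \eqref{eq:tpf_aw}--\eqref{eq:opf_2_aw}, built from the \emph{exact} values of $F$, is unbiased for $\nabla F_\gamma(x)$; for the double-realization scheme \eqref{eq:opf_2_aw} I would additionally use the independence of $\xi^+$ and $\xi^-$ to factor the expectation before applying the identity.

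Next, substituting the oracle decomposition $F_\delta(\cdot,\xi)=F(\cdot,\xi)+\delta(\cdot,\xi)$ from Definition \ref{def:GFOracle_aw} and using linearity of expectation splits $\mathbb{E}[\gg]$ into an exact part equal to $\nabla F_\gamma(x)$ and a noise part. For the two-point scheme this yields
\[
\boldsymbol{\omega}(x) = \mathbb{E}_{e,\xi}\!\left[\frac{d}{2\gamma}\bigl(\delta(x+\gamma e,\xi)-\delta(x-\gamma e,\xi)\bigr)e\right],
\]
and analogously $\boldsymbol{\omega}(x)=\frac{d}{\gamma}\mathbb{E}_{e,\xi}[\delta(x+\gamma e,\xi)e]$ for \eqref{eq:opf_1_aw}. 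It then remains to bound the norm: by Jensen's inequality (convexity of the norm), $\|e\|=1$ on the sphere, the triangle inequality, and the bound $|\delta|\leq\widetilde{\Delta}$ of Assumption \ref{ass:bound_noise_aw},
\[
\|\boldsymbol{\omega}(x)\| \leq \frac{d}{2\gamma}\,\mathbb{E}_{e,\xi}\bigl[|\delta(x+\gamma e,\xi)|+|\delta(x-\gamma e,\xi)|\bigr] \leq \frac{d}{2\gamma}\cdot 2\widetilde{\Delta} = \frac{d\widetilde{\Delta}}{\gamma}.
\]
Squaring gives $\|\boldsymbol{\omega}(x)\|^2\leq(d\widetilde{\Delta}/\gamma)^2$, i.e. $\Delta=d\widetilde{\Delta}/\gamma$, and the single-point estimate \eqref{eq:opf_1_aw} produces the same constant, so the bound holds uniformly over all three approximations.

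The only nontrivial ingredient is the smoothing identity of the second paragraph; everything after it is a one-line Jensen/triangle-inequality estimate. I expect the main obstacle to be stating that identity at the right level of generality — in particular ensuring that the expectation over $\xi$ (and over the two independent copies $\xi^\pm$ in \eqref{eq:opf_2_aw}) interacts correctly with the expectation over $e$, so that the exact part collapses cleanly to $\nabla F_\gamma(x)$ and the whole bias is carried by $\delta$.
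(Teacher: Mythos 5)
Your proposal is correct and follows essentially the same route as the paper: decompose $F_\delta = F + \delta$ inside each estimator, identify the exact part as an unbiased estimator of $\nabla F_\gamma$, and bound the residual via Jensen, $\|e\|=1$, and $|\delta|\leq\widetilde{\Delta}$ to get $\|\boldsymbol{\omega}(x)\|\leq d\widetilde{\Delta}/\gamma$. The only difference is cosmetic — you spell out the $l_2$-smoothing identity and the reading of Assumption \ref{ass:unbias_aw} with $\nabla F_\gamma$ explicitly, whereas the paper cites \cite{Gasnikov_2022} for the unbiasedness of the exact part and leaves the rest implicit.
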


    For a detailed proof of Lemma \ref{Lemma:deltas_aw}, see Appendix \ref{proof_lemma_deltas_aw}.

    We now have estimates for all the constants we need and all Assumptions are satisfied to apply our Algorithm \ref{alg:SADOM} to the non-smooth problem \eqref{eq:nonsmooth_init_problem_aw} using one of the three gradient approximation methods \eqref{eq:tpf_aw} - \eqref{eq:opf_2_aw}. 
    In the result of Corollary \ref{cor:beta_aw} we need to substitute $L_{F_\gamma}$ from Lemma \ref{Lemma:connect_f_with_f_gamma_aw}, $\widetilde{\sigma}$ from Lemma \ref{lemma:bounded_variance_aw} and $\Delta$ from Lemma \ref{Lemma:deltas_aw} to get the convergence of the ZO-SADOM method.

    \begin{theorem}[Convergence of ZO-SADOM] \label{th:ZOSADOM_aw}
        Let Assumptions \ref{ass:convex}-\ref{ass:gossip_matrix} be satisfied, also let Assumptions from Lemma \ref{lemma:bounded_variance_aw} be satisfied. Then based on the SADOM (Algorithm \ref{alg:SADOM}) and applying different approximation schemes \eqref{eq:tpf_aw} - \eqref{eq:opf_2_aw} the gradient-free algorithm Zero-Order Stochastic Accelerated Decentralized Optimization Method requires in the Euclidean setup has converge rate

        \begin{equation*}
            N = \mathcal{\tilde O}\left(\max\left\{\frac{d^{1/4} M_2 \chi}{\sqrt{\varepsilon \mu}} 
            ; \frac{\chi^2 \tilde \sigma^2}{\varepsilon B \mu}\right\}\right),
        \quad 
            \widetilde{\Delta}^2 = \mathcal{O}\left(\frac{\varepsilon^{5/2} \mu^{1/2}}{d^{7/4} M_2 N}\right),
        \end{equation*}

        where $\varepsilon$ is accuracy, i.e. $\mathbb{E}\left[ \frac{\mu}{2}\sqn{x^{N} - x^*} + F(x_f^{N}) - F(x^*)-\frac{\mu}{4}\sqn{x_f^{N} - x^*} \right] \leq \varepsilon$ and $\widetilde{\sigma}$ is defined in Lemma \ref{lemma:bounded_variance_aw} for \eqref{eq:tpf_aw} - \eqref{eq:opf_2_aw}.
        
    \end{theorem}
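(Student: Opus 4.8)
The plan is to prove the theorem by direct substitution into the convergence bound of Corollary \ref{cor:beta_aw}, followed by balancing the three resulting terms against the target accuracy $\varepsilon$. The starting point is the observation (made just before the statement) that applying SADOM to the smoothed problem \eqref{eq:smoothed_problem} is nothing but SADOM applied to an $L_{F_\gamma}$-smooth, $\mu$-strongly convex function whose stochastic gradient oracle is the chosen approximation scheme \eqref{eq:tpf_aw}--\eqref{eq:opf_2_aw}. Thus Corollary \ref{cor:beta_aw} applies verbatim once we replace the smoothness constant $L$ by $L_{F_\gamma}$ (Lemma \ref{Lemma:connect_f_with_f_gamma_aw}), the variance $\sigma^2$ by $\widetilde{\sigma}^2$ (Lemma \ref{lemma:bounded_variance_aw}), and the bias magnitude $\Delta$ by $d\widetilde{\Delta}/\gamma$ (Lemma \ref{Lemma:deltas_aw}); the $\beta$-tuning internal to the corollary is inherited unchanged under this substitution.

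First I would fix the smoothing radius $\gamma = \varepsilon/(2M_2)$, which by Lemma \ref{Lemma:connect_f_with_f_gamma_aw} guarantees $0 \le F_\gamma(x) - F(x) \le \gamma M_2 = \varepsilon/2$, so that an $(\varepsilon/2)$-accurate point for \eqref{eq:smoothed_problem} is automatically an $\varepsilon$-accurate point for \eqref{eq:nonsmooth_init_problem_aw}. With this choice $L_{F_\gamma} = \sqrt{d}\,M_2/\gamma = 2\sqrt{d}\,M_2^2/\varepsilon$, hence $\sqrt{L_{F_\gamma}} = \sqrt{2}\,d^{1/4}M_2/\sqrt{\varepsilon}$, while $\Delta = d\widetilde{\Delta}/\gamma = 2dM_2\widetilde{\Delta}/\varepsilon$. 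Substituting these into the three terms of Corollary \ref{cor:beta_aw} produces an exponentially decaying deterministic term with rate $\propto \sqrt{\mu/L_{F_\gamma}}$, a stochastic term $\chi^2\widetilde{\sigma}^2/(B\mu N)$, and a bias term $\propto \Delta^2 N/(\sqrt{L_{F_\gamma}}\,\mu^{1/2})$.

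The core of the argument is then a term-by-term balancing. For the deterministic term, solving $\hat C_0 \exp[-\sqrt{\mu}\,N/(32\sqrt{2}\,\chi\sqrt{L_{F_\gamma}})] \le \varepsilon$ for $N$ and inserting $\sqrt{L_{F_\gamma}}$ yields the first branch $N = \widetilde{\mathcal{O}}(d^{1/4}M_2\chi/\sqrt{\varepsilon\mu})$. For the stochastic term, imposing $\chi^2\widetilde{\sigma}^2/(B\mu N) \le \varepsilon$ yields the second branch $N = \mathcal{O}(\chi^2\widetilde{\sigma}^2/(\varepsilon B\mu))$; taking the larger of the two branches gives the stated $\max\{\cdot,\cdot\}$. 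The bias term is the delicate one: since it is \emph{increasing} in $N$, it cannot be suppressed by iterating longer, and must instead be forced below $\varepsilon$ by constraining the oracle noise. Imposing $\Delta^2 N/(\sqrt{L_{F_\gamma}}\,\mu^{1/2}) \le \varepsilon$ and solving for $\widetilde{\Delta}^2$ after the substitutions above yields the maximum-noise condition $\widetilde{\Delta}^2 = \mathcal{O}(\varepsilon^{5/2}\mu^{1/2}/(d^{7/4}M_2 N))$.

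I expect this balancing of the bias term to be the main conceptual obstacle, precisely because $N$ and $\widetilde{\Delta}$ are coupled: the admissible noise level degrades like $1/N$, so one has to confirm that the iteration count chosen to kill the first two terms is consistent with a nonvacuous noise budget. The remaining work---tracking the exponents through each substitution and absorbing the logarithmic factors into $\widetilde{\mathcal{O}}$---is routine bookkeeping. Crucially, no fresh oracle estimates are needed: Lemma \ref{lemma:bounded_variance_aw} already certifies Assumption \ref{ass:variance} with $\sigma^2 \le \widetilde{\sigma}^2$, and Lemma \ref{Lemma:deltas_aw} certifies Assumption \ref{ass:unbias_aw} with $\Delta = d\widetilde{\Delta}/\gamma$, so all three approximation schemes \eqref{eq:tpf_aw}--\eqref{eq:opf_2_aw} are handled simultaneously through their respective values of $\widetilde{\sigma}^2$.
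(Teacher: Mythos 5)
Your proposal is correct and follows essentially the same route as the paper's own proof: substitute $L \mapsto L_{F_\gamma} = 2\sqrt{d}M_2^2/\varepsilon$ (via $\gamma = \varepsilon/(2M_2)$), $\sigma^2 \mapsto \widetilde{\sigma}^2$, and $\Delta \mapsto d\widetilde{\Delta}/\gamma$ into Corollary \ref{cor:beta_aw}, then bound each of the three terms by a constant fraction of $\varepsilon$ to read off the two branches of $N$ and the admissible noise level $\widetilde{\Delta}^2$. The exponent bookkeeping you outline reproduces the paper's $\varepsilon^{5/2}\mu^{1/2}/(d^{7/4}M_2 N)$ bound exactly.
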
}
    It is not hard to see that the ZO-SADOM algorithm is slower to converge than SADOM, which is the cost of using gradient-free algorithms, but it is worth noting that neither SADOM nor ADOM+ can solve non-smooth problems, which is why this result is really important. For a detailed proof of Theorem \ref{th:ZOSADOM_aw}, see Appendix \ref{proof_theorem_2_aw}.

    \aw{In our paper, we use three gradient approximations \eqref{eq:tpf_aw} - \eqref{eq:opf_2_aw}, for them we can clarify the convergence estimates for the ZO-SADOM method, by substituting $\widetilde{\sigma}^2$ from Lemma \ref{lemma:bounded_variance_aw}.

    \begin{corollary}[TPF, \eqref{eq:tpf_aw}]
    \label{col:tpf_N_delta}
        Let Assumptions from Theorem \eqref{th:ZOSADOM_aw} be satisfied, then ZO-SADOM algorithm with two point feedback gradient approximation \eqref{eq:tpf_aw} has converge rate 

        \begin{equation*}
            N = \mathcal{\tilde O}\left(\max\left\{\frac{d^{1/4} M_2 \chi}{\sqrt{\varepsilon \mu}} 
            ; \frac{\chi^2 d M_2^2}{\varepsilon B \mu}\right\}\right),
        \quad
            \widetilde{\Delta}^2 = \mathcal{O}\left(\min\left\{\frac{\varepsilon^{5/2} \mu^{1/2}}{d^{7/4} M_2 N}; \frac{B \mu N \varepsilon^3}{\chi^2 M_2^2 d}\right\}\right).
        \end{equation*}
        
    \end{corollary}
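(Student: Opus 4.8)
\textit{Proof idea.} The plan is to obtain Corollary~\ref{col:tpf_N_delta} as a direct specialization of Theorem~\ref{th:ZOSADOM_aw} to the two-point feedback estimator~\eqref{eq:tpf_aw}, the only new ingredient being the explicit variance proxy $\widetilde{\sigma}^2$. First I would recall from Lemma~\ref{lemma:bounded_variance_aw} that for~\eqref{eq:tpf_aw} one has $\widetilde{\sigma}^2 = 2\sqrt{2}\,d\left(M_2^2 + \frac{d\widetilde{\Delta}^2}{\sqrt{2}\gamma^2}\right)$, and insert the smoothing choice $\gamma = \varepsilon/(2M_2)$ fixed before the theorem, which by Lemma~\ref{Lemma:connect_f_with_f_gamma_aw} turns an $(\varepsilon/2)$-solution of the smoothed problem~\eqref{eq:smoothed_problem} into an $\varepsilon$-solution of~\eqref{eq:nonsmooth_init_problem_aw}. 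After this substitution the variance naturally splits into a noise-free part of order $d M_2^2$ and a noise-driven part governed by $d^2\widetilde{\Delta}^2/\gamma^2$.

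The iteration count follows by keeping the dominant, noise-free part $\widetilde{\sigma}^2 = \Theta(d M_2^2)$ and plugging it into the second argument of the maximum in Theorem~\ref{th:ZOSADOM_aw}: the expression $\frac{\chi^2\widetilde{\sigma}^2}{\varepsilon B\mu}$ becomes $\frac{\chi^2 d M_2^2}{\varepsilon B\mu}$, while the first argument $\frac{d^{1/4}M_2\chi}{\sqrt{\varepsilon\mu}}$ is left untouched since it does not involve $\widetilde{\sigma}$. This reproduces the stated $N$.

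For the admissible noise level I would impose two separate requirements whose minimum gives the claimed bound. The first is the generic bias restriction already recorded in Theorem~\ref{th:ZOSADOM_aw}, namely $\widetilde{\Delta}^2 = \mathcal{O}\left(\frac{\varepsilon^{5/2}\mu^{1/2}}{d^{7/4}M_2 N}\right)$, which arises from forcing the bias term $\frac{\Delta^2 N}{\sqrt{L_{F_\gamma}}\,\mu^{1/2}}$ of Corollary~\ref{cor:beta_aw} below $\varepsilon$ after substituting $\Delta = d\widetilde{\Delta}/\gamma$ and $L_{F_\gamma} = \sqrt{d}M_2/\gamma$ from Lemmas~\ref{Lemma:deltas_aw} and~\ref{Lemma:connect_f_with_f_gamma_aw}; this is the first entry of the minimum. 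The second requirement is self-consistency of the variance step: the noise-driven part $d^2\widetilde{\Delta}^2/\gamma^2$ must remain subdominant inside $\widetilde{\sigma}^2$, so that the value of $N$ used above is legitimate. Writing this as the condition that the stochastic error term $\frac{\chi^2\widetilde{\sigma}^2}{B\mu N}$ stay at most $\varepsilon$ and substituting $\gamma=\varepsilon/(2M_2)$ yields a bound of the form $\frac{B\mu N\varepsilon^3}{\chi^2 M_2^2 d}$, the second entry. Taking the smaller of the two completes the argument.

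The one genuinely delicate point will be the coupling between $N$ and $\widetilde{\Delta}$: since $\widetilde{\sigma}^2$ simultaneously enters $N$ and depends on $\widetilde{\Delta}$, I must check that the noise level selected from the two constraints is small enough to justify $\widetilde{\sigma}^2 = \Theta(d M_2^2)$ at the chosen $N$, so the derivation is not circular. Everything else is bookkeeping of the powers of $d,\varepsilon,\mu,\chi$ inherited verbatim from Theorem~\ref{th:ZOSADOM_aw}.
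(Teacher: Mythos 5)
Your proposal follows exactly the paper's route: Corollary~\ref{col:tpf_N_delta} is obtained by substituting the two-point-feedback variance bound $\widetilde{\sigma}^2 = 2\sqrt{2}\,d\bigl(M_2^2 + d\widetilde{\Delta}^2/(\sqrt{2}\gamma^2)\bigr)$ from Lemma~\ref{lemma:bounded_variance_aw} into Theorem~\ref{th:ZOSADOM_aw} with $\gamma=\varepsilon/(2M_2)$, keeping the noise-free part $\Theta(dM_2^2)$ to read off $N$ and converting the noise-driven part into the second entry of the minimum for $\widetilde{\Delta}^2$, which is precisely what the paper does (it gives no separate proof beyond this substitution). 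One bookkeeping remark: carrying out your own condition $\chi^2 d^2\widetilde{\Delta}^2/(\gamma^2 B\mu N)\lesssim\varepsilon$ literally yields $\widetilde{\Delta}^2 \lesssim B\mu N\varepsilon^3/(\chi^2 M_2^2 d^{2})$, i.e.\ a $d^{2}$ rather than the $d$ appearing in the second entry of the minimum in the statement, so that exponent deserves a second look --- but this discrepancy is inherited from the corollary as printed, not introduced by your argument.
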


    We now can get estimates on the communication and oracle complexity of the ZO-SADOM Algorithm \ref{alg:SADOM} with two point feedback gradient approximation \eqref{eq:tpf_aw}. Consider 2 options for selecting the bach size $B$:

    \begin{itemize}
        \item If we take $B = 1$, when we obtain such estimates on  communication and oracle complexity of the ZO-SADOM algorithm

        \begin{equation*}
            N_{\text{oracle}} = N \cdot B = \mathcal{\tilde O}\left(\max\left\{\frac{d^{1/4} M_2 \chi}{\sqrt{\varepsilon \mu}} 
            ; \frac{\chi^2 d M_2^2}{\varepsilon \mu}\right\}\right),
        \end{equation*}
            
        \begin{equation*}
             N_{\text{comm}} = N = \mathcal{\tilde O}\left(\max\left\{\frac{d^{1/4} M_2 \chi}{\sqrt{\varepsilon \mu}} 
             ; \frac{\chi^2 d M_2^2}{\varepsilon \mu}\right\}\right).
        \end{equation*}

        But this choice $B$ gives poor estimates on $N_{\text{oracle}}$, also we can improve the estimate on $N_{\text{comm}}$ with multi-gossip step, similarly to what was done in the paper \cite{NEURIPS2021_bc37e109, Kovalev_2022}. Then $\chi_{\text{eff}} = \mathcal{O}(1)$, but we need to do more communication in $\lceil \chi \ln(2) \rceil$ times. Let us write out the estimates on $N_{\text{oracle}}$ and $N_{\text{orale}}$ in this case

        \item If we take $B = \frac{d M_2^2}{\varepsilon \mu} \cdot \frac{\sqrt{\varepsilon \mu}}{d^{1/4} M_2} = \frac{d^{3/4} M_2}{\sqrt{\varepsilon \mu}}$ and with using multi-gossip step, we obtain such estimates on  communication and oracle complexity of the ZO-SADOM algorithm

        \begin{equation*}
            N_{\text{oracle}} = N \cdot B = \mathcal{\tilde O}\left(\frac{d M_2^2}{\varepsilon \mu} 
            \right),
        \end{equation*}

        \begin{equation*}
             N_{\text{comm}} = N \cdot \lceil \chi \ln(2) \rceil  = \mathcal{\tilde O}\left(\frac{d^{1/4} M_2 \chi}{\sqrt{\varepsilon \mu}} 
             \right).
        \end{equation*}
    \end{itemize}

    Now we clarify the convergence estimates for the ZO-SADOM method with using one point feedback methods. 
    Similar results are obtained in \cite{JMLR:v20:19-543}, but for fixed communication networks.
        
    \begin{corollary}[OPF via single realization of $\xi$, \eqref{eq:opf_1_aw}]
        Let Assumptions from Theorem \eqref{th:ZOSADOM_aw} be satisfied, then ZO-SADOM algorithm with one point feedback via single realization of $\xi$ gradient approximation \eqref{eq:opf_1_aw} has converge rate 

        \begin{equation*}
            N = \mathcal{\tilde O}\left(\max\left\{\frac{d^{1/4} M_2 \chi}{\sqrt{\varepsilon \mu}} 
            ; \frac{\chi^2 d^2 M_2^2G^2}{\varepsilon^3 B \mu}\right\}\right),
        \end{equation*}

        \begin{equation*}
            \widetilde{\Delta}^2 = \mathcal{O}\left(\min\left\{\frac{\varepsilon^{5/2} \mu^{1/2}}{d^{7/4} M_2 N}; \frac{B \mu N \varepsilon^3}{\chi^2 M_2^2 d^2}\right\}\right).
        \end{equation*}
        
    \end{corollary}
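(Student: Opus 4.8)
The plan is to obtain this corollary as a direct specialization of Theorem \ref{th:ZOSADOM_aw}, simply by inserting the one-point-feedback variance bound from Lemma \ref{lemma:bounded_variance_aw} into the generic iteration-count and noise-level estimates. Recall that to reach an $\varepsilon$-solution of the non-smooth problem \eqref{eq:nonsmooth_init_problem_aw} we fix the smoothing radius at $\gamma = \varepsilon/(2M_2)$, so that an $(\varepsilon/2)$-solution of the smoothed problem \eqref{eq:smoothed_problem} is an $\varepsilon$-solution of the original one (Lemma \ref{Lemma:connect_f_with_f_gamma_aw}). First I would compute the effective variance: from Lemma \ref{lemma:bounded_variance_aw}, the single-realization estimator \eqref{eq:opf_1_aw} satisfies, after substituting $\gamma^2 = \varepsilon^2/(4M_2^2)$,
\begin{equation*}
    \widetilde{\sigma}^2 = 2d^2\left(\frac{G^2}{\gamma^2} + \frac{\widetilde{\Delta}^2}{\gamma^2}\right) = \frac{8 d^2 M_2^2}{\varepsilon^2}\left(G^2 + \widetilde{\Delta}^2\right).
\end{equation*}
Provided the adversarial noise stays in the regime $\widetilde{\Delta}^2 \lesssim G^2$, the leading behaviour is $\widetilde{\sigma}^2 = \mathcal{O}(d^2 M_2^2 G^2/\varepsilon^2)$.

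Next I would plug this into the generic iteration count of Theorem \ref{th:ZOSADOM_aw}. The first branch of the maximum, $d^{1/4} M_2 \chi / \sqrt{\varepsilon\mu}$, is unchanged, since it arises from the exponentially decaying deterministic term and depends only on $L_{F_\gamma} = \sqrt{d}M_2/\gamma$, which is common to all three approximation schemes. The stochastic branch becomes $\chi^2\widetilde{\sigma}^2/(\varepsilon B\mu) = \mathcal{O}(\chi^2 d^2 M_2^2 G^2/(\varepsilon^3 B\mu))$, which is exactly the second argument of the maximum in the statement; combining the two branches yields the claimed $N$.

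The noise cap requires slightly more care, because $\widetilde{\Delta}$ enters the analysis twice: once through the bias $\Delta = d\widetilde{\Delta}/\gamma$ of Lemma \ref{Lemma:deltas_aw}, and once inside $\widetilde{\sigma}^2$ above. The first term of the $\min$ is inherited verbatim from Theorem \ref{th:ZOSADOM_aw}: forcing the bias contribution $\Delta^2 N / (\sqrt{L_{F_\gamma}}\,\mu^{1/2})$ in Corollary \ref{cor:beta_aw} below $\varepsilon$ gives $\widetilde{\Delta}^2 = \mathcal{O}(\varepsilon^{5/2}\mu^{1/2}/(d^{7/4}M_2 N))$. The second term is the self-consistency requirement that the $\widetilde{\Delta}^2$ piece of $\widetilde{\sigma}^2$ must not overwhelm the stochastic error term $\chi^2\widetilde{\sigma}^2/(B\mu N)$; demanding $\chi^2 \cdot 8 d^2 M_2^2\widetilde{\Delta}^2/(B\mu N\varepsilon^2) \lesssim \varepsilon$ gives $\widetilde{\Delta}^2 = \mathcal{O}(B\mu N\varepsilon^3/(\chi^2 M_2^2 d^2))$, and taking the minimum of the two caps produces the advertised tolerance. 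I expect the main obstacle to be precisely this double appearance of $\widetilde{\Delta}$: one must check that the $N$ solved for above does not itself violate either side of the $\min$, and that the simplifying regime $\widetilde{\Delta}^2 \lesssim G^2$ is compatible with both bounds. Everything else is routine substitution of the three ingredients $L_{F_\gamma}$, $\widetilde{\sigma}^2$, $\Delta$ into Corollary \ref{cor:beta_aw}, exactly as described in the paragraph preceding Theorem \ref{th:ZOSADOM_aw}.
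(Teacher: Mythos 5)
Your proposal is correct and follows exactly the route the paper intends: the corollary is obtained by substituting $\gamma = \varepsilon/(2M_2)$ into the one-point-feedback bound $\widetilde{\sigma}^2 = 2d^2(G^2+\widetilde{\Delta}^2)/\gamma^2$ from Lemma~\ref{lemma:bounded_variance_aw} and feeding the result into Theorem~\ref{th:ZOSADOM_aw}, with the second term of the $\min$ arising precisely from the self-consistency requirement that the $\widetilde{\Delta}^2$-part of $\widetilde{\sigma}^2$ not dominate the stochastic error term $\chi^2\widetilde{\sigma}^2/(B\mu N)$. Your explicit accounting of the double appearance of $\widetilde{\Delta}$ (via the bias $\Delta = d\widetilde{\Delta}/\gamma$ and via $\widetilde{\sigma}^2$) is in fact more careful than the paper's own presentation, which states the corollary without proof.
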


    \begin{corollary}[OPF via double realization of $\xi$, \eqref{eq:opf_2_aw}]
        Let Assumptions from Theorem \eqref{th:ZOSADOM_aw} be satisfied, then ZO-SADOM algorithm with one point feedback via double realization of $\xi$ gradient approximation \eqref{eq:opf_2_aw} has converge rate 

        \begin{equation*}
            N = \mathcal{\tilde O}\left(\max\left\{\frac{d^{1/4} M_2 \chi}{\sqrt{\varepsilon \mu}} 
            ; \frac{\chi^2 d^2 M_2^2}{\varepsilon B \mu}\right\}\right),
        \end{equation*}

        \begin{equation*}
            \widetilde{\Delta}^2 = \mathcal{O}\left(\min\left\{\frac{\varepsilon^{5/2} \mu^{1/2}}{d^{7/4} M_2 N}; \frac{B \mu N \varepsilon^3}{\chi^2 M_2^2 d^3}\right\}\right).
        \end{equation*}
        
    \end{corollary}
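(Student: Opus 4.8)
The strategy is to invoke the batched convergence guarantee of Corollary~\ref{cor:beta_aw}, but applied to the smoothed surrogate $F_\gamma$ from \eqref{eq:f_gamma_aw} in place of $F$, and then to translate the resulting bound back to the original non-smooth objective. Concretely, running Algorithm~\ref{alg:SADOM} on problem \eqref{eq:smoothed_problem} with any of the randomized estimators \eqref{eq:tpf_aw}--\eqref{eq:opf_2_aw} (or their batched versions) is an instance of SADOM equipped with a gradient oracle for $F_\gamma$; Lemmas~\ref{lemma:bounded_variance_aw} and \ref{Lemma:deltas_aw} certify exactly that Assumptions~\ref{ass:unbias_aw} and \ref{ass:variance} hold, with the identifications $L \mapsto L_{F_\gamma} = \sqrt{d}M_2/\gamma$, $\sigma^2 \mapsto \widetilde{\sigma}^2$, and $\Delta \mapsto d\widetilde{\Delta}/\gamma$. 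Thus Corollary~\ref{cor:beta_aw} yields, for the $F_\gamma$-Lyapunov quantity, the three-term bound: an exponentially decaying deterministic part, a stochastic part $\chi^2\widetilde{\sigma}^2/(B\mu N)$, and a bias part $\Delta^2 N/(\sqrt{L}\mu^{1/2})$.

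First I would fix the smoothing radius $\gamma = \varepsilon/(2M_2)$, so that by the sandwich inequality $F(x)\le F_\gamma(x)\le F(x)+\gamma M_2$ of Lemma~\ref{Lemma:connect_f_with_f_gamma_aw} an $(\varepsilon/2)$-accurate point for \eqref{eq:smoothed_problem} is an $\varepsilon$-accurate point for \eqref{eq:nonsmooth_init_problem_aw}. This choice gives $L_{F_\gamma} = 2\sqrt{d}M_2^2/\varepsilon$, hence $\sqrt{L_{F_\gamma}} = \mathcal{O}(d^{1/4}M_2/\sqrt{\varepsilon})$, and $\Delta = 2dM_2\widetilde{\Delta}/\varepsilon$. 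It then remains to force each of the three terms of Corollary~\ref{cor:beta_aw} below a constant fraction of $\varepsilon$.

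For the deterministic term I would take logarithms: $\hat C_0\exp[-\sqrt{\mu}N/(32\sqrt{2}\chi\sqrt{L})]\le\varepsilon$ requires $N = \widetilde{\mathcal{O}}(\chi\sqrt{L_{F_\gamma}}/\sqrt{\mu}) = \widetilde{\mathcal{O}}(d^{1/4}M_2\chi/\sqrt{\varepsilon\mu})$, which is the first entry of the maximum. The stochastic term $\chi^2\widetilde{\sigma}^2/(B\mu N)\le\varepsilon$ decreases in $N$ and therefore imposes the lower bound $N\ge\chi^2\widetilde{\sigma}^2/(\varepsilon B\mu)$, the second entry of the maximum; taking the larger of the two pins down the iteration complexity. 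Finally, the bias term grows in $N$: substituting $\Delta$ and $\sqrt{L_{F_\gamma}}$ into $\Delta^2 N/(\sqrt{L}\mu^{1/2})\le\varepsilon$ and solving for the admissible noise gives $\widetilde{\Delta}^2 = \mathcal{O}(\varepsilon^{5/2}\mu^{1/2}/(d^{7/4}M_2 N))$, the claimed noise-tolerance bound.

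The main obstacle is the interplay between the two $N$-dependent error terms. Because the bias contribution scales with $N$ while the stochastic contribution scales with $1/N$, one cannot shrink both simultaneously by a single choice of $N$; the resolution is that $N$ is already fixed by the deterministic and stochastic requirements, after which the noise budget $\widetilde{\Delta}^2$ must be taken inversely proportional to that $N$, which is precisely why the stated noise bound is expressed in terms of $N$. The only other point demanding care is the bookkeeping of the powers of $d$, $M_2$, $\varepsilon$ and $\mu$ produced by the substitution $\gamma=\varepsilon/(2M_2)$, together with the verification (via Lemmas~\ref{lemma:bounded_variance_aw} and \ref{Lemma:deltas_aw}) that the randomized oracle genuinely meets the bias and variance hypotheses required by Corollary~\ref{cor:beta_aw}; the instance-specific values of $\widetilde{\sigma}^2$ then yield the per-scheme refinements recorded in the subsequent corollaries.
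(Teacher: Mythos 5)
Your proposal is correct and follows essentially the same route as the paper: apply Corollary~\ref{cor:beta_aw} to the smoothed problem with the identifications $L\mapsto L_{F_\gamma}=2\sqrt{d}M_2^2/\varepsilon$, $\sigma^2\mapsto\widetilde{\sigma}^2$, $\Delta\mapsto d\widetilde{\Delta}/\gamma$ under $\gamma=\varepsilon/(2M_2)$, then bound the three summands separately to read off $N$ from the deterministic and stochastic terms and $\widetilde{\Delta}^2$ from the bias term, exactly as in the paper's proof of Theorem~\ref{th:ZOSADOM_aw} followed by substitution of the scheme-specific $\widetilde{\sigma}^2$ from Lemma~\ref{lemma:bounded_variance_aw}. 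The one point you leave implicit is the second entry of the $\min$ in the $\widetilde{\Delta}^2$ bound: for scheme \eqref{eq:opf_2_aw} the variance $\widetilde{\sigma}^2=3d^2(3M_2^2+2d\widetilde{\Delta}^2/\gamma^2)$ itself contains the noise, so controlling the stochastic term forces both $N\gtrsim\chi^2 d^2M_2^2/(\varepsilon B\mu)$ and $\widetilde{\Delta}^2\lesssim B\mu N\varepsilon\gamma^2/(\chi^2 d^3)=\mathcal{O}\bigl(B\mu N\varepsilon^3/(\chi^2M_2^2d^3)\bigr)$; spelling out this split of $\widetilde{\sigma}^2$ into its noise-free and noise-dependent parts would complete the argument, which otherwise matches the paper.
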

    
    Similar to what we did after Corollary \ref{col:tpf_N_delta}, for these two gradient approximations \eqref{eq:opf_1_aw} and \eqref{eq:opf_2_aw} we can obtain estimates on communication and oracle complexity.
    }


\section{Discussion}\label{sec:Discussion}
We introduced an algorithm to extend the result of Lemma \ref{lem:ADOM+} to the stochastic case. In Theorem \ref{th:SADOM} we can see that the convergence of the deterministic term coincides completely with the convergence of ADOM+, while the stochastic term can be reduced using a batch procedure, so that this term will not affect the convergence. Thus, we can formally say that our approach to create a gradient-free algorithm is based on an optimal accelerated decentralized optimization method. This trick makes it possible to create a gradient-free, decentralized algorithm that works on time-varying graphs for any problem setting: deterministic or stochastic. In any case we are forced to use the results of Theorem \ref{alg:SADOM}, since the smoothing scheme artificially generates stochasticity via $l_2$ randomization. However, if we want to solve the stochastic setting of the problem, the approximation of the gradient used in the gradient-free algorithm will take the following form: $\gg(x_g^k, \bxi^k) = \frac{1}{B}\sum_{i=1}^B \gg(x_g^k, \bxi_i^k) = \frac{1}{B}\sum_{i=1}^B \gg(x_g^k, \xi_i^k, e_i^k)$, where $\bxi$ means real $\xi$ and artificial $e$ stochasticity  obtained by $l_2$ randomization: $\bxi_i=\left\{ \xi_i, e_i \right\}$. Then, having done all the procedures described in Section \ref{sec:Zero-Order Methods_aw}, the convergence result for the stochastic setting will remain the same as described in Theorem \ref{th:ZOSADOM_aw}. Further, in order to confirm the results of Theorem \ref{th:ZOSADOM_aw}, we implement in practice a model experiment (see Section \ref{sec:Experiments}), where we will investigate the convergence of the Zero-Order Stochastic Accelerated Decentralized Optimization Method on graphs with different structures.


\section{Experiments}\label{sec:Experiments}

In this section, we demonstrate the operation of the ZO-SADOM algorithm in which the gradient is approximated with a finite difference with $l_2$ randomization on the sphere. Minimization of the logistic regression loss function is considered as a task. The network is simulated by a sequence of randomly generated graphs.

\subsection{Logistic regression loss function}

We consider the loss function of logistic regression with $l_2$ regularization

\begin{gather*} 
     f_i(x) = \frac{1}{m}\sum\limits_{j=1}^m \log (1+ \exp(b_{ij}a^T_{ij}x)) + \frac{r}{2}\|x\|^2,
\end{gather*}

\noindent where $a_{ij}\in \mathbb{R}^d$ and $b_{ij}\in \{-1, 1\}$ are data points and labels, $r > 0$ is a regularization parameter, $m$ is the number of data points stored on each node. Lipschitz constant of the function is $L=1$. Using the regularization parameter $r$, the value of the strong convexity constant $\mu$ is selected so that the condition number becomes $\kappa = 10^5$. The data is taken from the covtype LIBSVM~\cite{chang2011libsvm} dataset. We take 10000 samples and divide them into $n = 100$ nodes of the graph with 100 samples for each node.

\subsection{Network}

We are considering two different time-varying networks. The first network is emitted by a sequence of randomly generated geometric graphs with $\chi \approx 30$.The second network is emitted by a sequence of randomly generated ring and star graphs with $\chi \approx 1000$.

\begin{figure}[H]

    \includegraphics[width=0.31\textwidth]{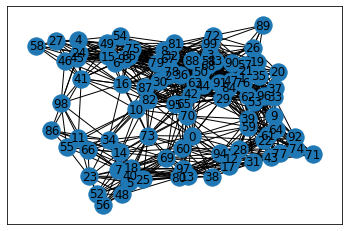}
    \includegraphics[width=0.31\textwidth]{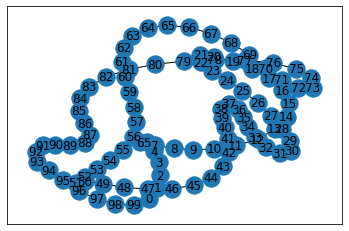}
    \includegraphics[width=0.31\textwidth]{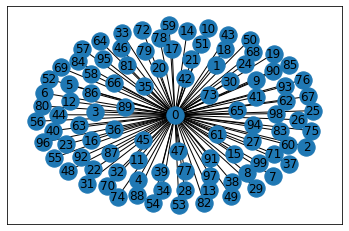}
    \caption{Example of a geometric, ring and star graphs in a sequences}\label{Fig:graphs}
\end{figure}
\subsection{Approximation of the gradient}
The gradient of the function is approximated by the central difference scheme \eqref{eq:tpf_aw}
\begin{gather*} 
     \gg(x, e) = d\frac{f(x+\gamma e) - f(x-\gamma e)}{2\gamma},
\end{gather*}
Where $\gamma > 0$, $e \sim (S^d_2)$, $d$ - the dimension of the problem. In our experiments, the $\gamma$ parameter is selected on the order of $10^{-4}$. When calculating the gradient, batching is used with the size of the batch = 55, with the dimension of the problem $d = 54$.

\subsection{Operation of the algorithm}

\begin{figure}[H]
    \includegraphics[height=0.4\textwidth]{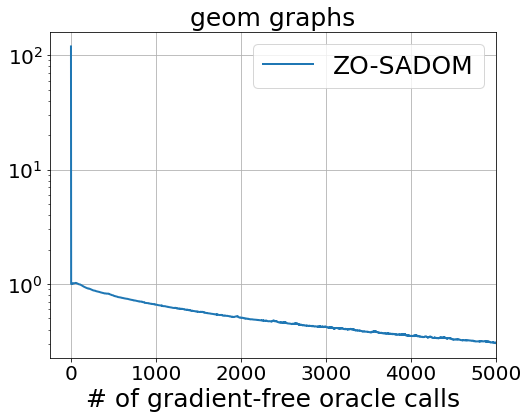}
    \includegraphics[height=0.4\textwidth]{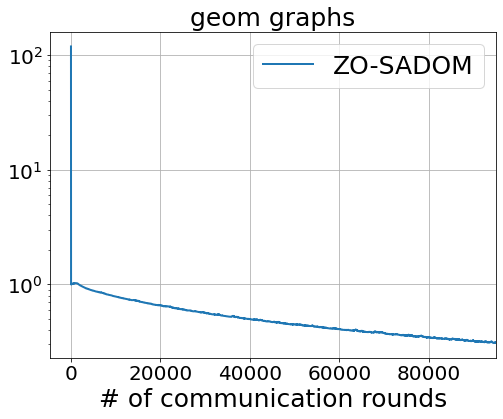}
    \caption{Operation of the ZO-SADOM method on a sequence of geometric graphs}\label{Fig: geom_graph}   
\end{figure}

\begin{figure}[H]
    
    \includegraphics[height=0.4\textwidth]{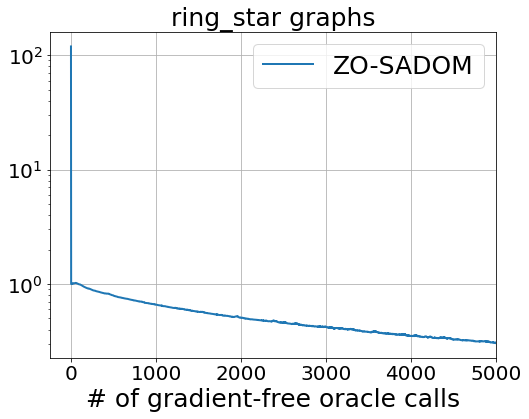}
    \includegraphics[height=0.4\textwidth]{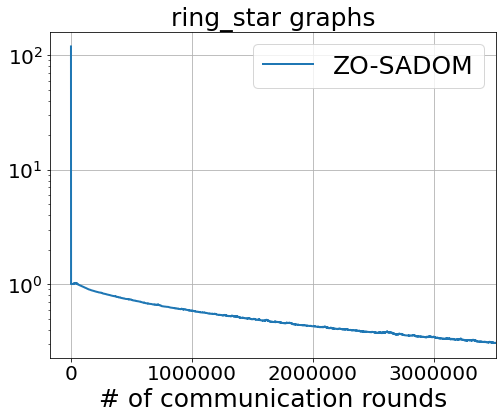}
    \caption{Operation of the ZO-SADOM method on a sequence of ring and star graphs}\label{Fig: rs_graph}   
\end{figure}

Figures \ref{Fig: geom_graph} and \ref{Fig: rs_graph} show the convergence of the decentralized algorithm ZO-SADOM on time-varying graphs, depending on the number of communication rounds and the total number of calls of the gradient-free oracle. Figure \ref{Fig: geom_graph} uses a geometric graph as the starting network, which varies over time. A Figure \ref{Fig: rs_graph} uses a ring or star as the starting network, alternating over time. The results obtained show that in order to achieve the same value in both cases, it takes approximately same number of gradient-free oracle calls. However, in the second case, the Zero-Order Stochastic Accelerated Decentralized Optimization Method has to make much more communication rounds, which is why it works longer.


\section{Conclusion}\label{sec:Conclusion}

In this paper we proposed a new algorithm for solving a non-smooth strongly convex decentralized optimization problem on time-varying graphs. Our approach to creating a gradient-free algorithm was to generalize the convergence results of the ADOM+ algorithm to a stochastic setting (which may be of independent interest) to apply a smoothing technique with $l_2$ randomization. We verified our theoretical results on a model experiment, showing the convergence of the algorithms using different types of network. We have shown that the developed algorithm is robust both for deterministic setting and for stochastic setting of the problem.


\subsection*{Acknowledgment}
The work of Alexander Gasnikov, Aleksandr Lobanov  was supported by a grant for research centers in the field of artificial intelligence, provided by the Analytical Center for the Government of the Russian Federation in accordance with the subsidy agreement (agreement identifier 000000D730321P5Q0002) and the agreement with the Ivannikov Institute for System Programming of the Russian Academy of Sciences dated November 2, 2021 No. 70-2021-00142.

\bibliography{3_bibliography}
\newpage

\begin{appendices}

    \part*{Appendix}



\section{Auxiliary Facts and Results}

    In this section we list auxiliary facts and results that we use several times in our~proofs.
    
    \subsection{Squared norm of the sum} For all $a_1,...,a_n \in \mathbb{R}^d$, where $n=\{2,3\}$
    \begin{equation}
        \label{eq:squared_norm_sum}
        \|a_1 + ... + a_n \|^2 \leq n \| a_1 \|^2 + ... + n \| a_n \|^2.
    \end{equation}
    
    \subsection{Fenchel-Young inequality} For all $a,b\in\mathbb{R}^d$ and $\lambda > 0$
    \begin{equation}
        \dotprod{a}{b} \leq \frac{\|a\|^2}{2\lambda} + \frac{\lambda\|b\|^2}{2}.\label{eq:fenchel_young_inequality}
    \end{equation}
    
    \subsection{Inner product representation} For all $a,b\in\mathbb{R}^d$
    \begin{equation}
        \dotprod{a}{b} = \frac{1}{2}\left(\|a+b\|^2 - \|a\|^2 - \|b\|^2\right). \label{eq:inner_product_representation}
    \end{equation}
    
    \subsection{Fact from concentration of the measure}
    Let $\ee$ is uniformly distributed on the Euclidean unit sphere, then, for $d \geq 8$, $\forall s \in \mathbb{R}^d$
    \begin{equation}
        \label{Concentration_measure}
        \mathbb{E}_\ee \left( \dotprod{s}{\ee}^2\right) \leq \frac{\| s \|^2}{d}. 
    \end{equation}

\section{Proof of Theorem}\label{proof_Theorem1}

By $\bg_F(x,y)$ we denote Bregman distance $\bg_F(x,y)\eqdef F(x) - F(y) - \<\g F(y),x-y>$.

\begin{lemma}
	Let $\tau_2$ be defined as follows:
	\begin{equation}\label{scary:tau2}
	\tau_2 = \sqrt{\mu/L}.
	\end{equation}
 
	Let $\tau_1$ be defined as follows:
	\begin{equation}\label{scary:tau1}
	\tau_1 = (1/\tau_2 + 1/2)^{-1}.
	\end{equation}
 
    \aw{
    Let $\eta$ be defined as follows:
	\begin{equation}\label{scary:eta_aw}
	\eta = \left([1/\beta + L] \cdot \tau_2\right)^{-1}
	\end{equation}
    }
    
    
    \aw{Let $\alpha$ be defined as follows:
    \begin{equation}\label{scary:alpha_aw}
        \alpha = \mu / 4
    \end{equation}}
    
    
	Let $\nu$ be defined as follows:
	\begin{equation}\label{scary:nu}
	\nu = \mu/2.
	\end{equation}
 
	Let $\Psi_x^k$ be defined as follows:
	\begin{equation}\label{scary:Psi_x}
	\Psi_x^k = \left(\frac{1}{\eta} + \alpha\right)\sqn{x^{k} - x^*} + \frac{2}{\tau_2}\left(\bg_f(x_f^{k},x^*)-\frac{\nu}{2}\sqn{x_f^{k} - x^*} \right)
	\end{equation}
 
	Then the following inequality holds:
	\begin{equation}
    \begin{split}\label{scary:eq:x}
	\expect{\Psi_x^{k+1}} 
    &\leq 
    \aw{\max\left\{1 - \tau_2/2, 1/(1+\eta \alpha)\right\}}\Psi_x^k
	\\&+
	2\expect{\< y^{k+1} - y^*,x^{k+1} - x^*>}
	-\left(\bg_F(x_g^k,x^*) - \frac{\nu}{2}\sqn{x_g^k - x^*}\right) + \frac{\sigma^2}{\tau_2} + \aw{\frac{4}{\mu} \Delta^2}.
    \end{split}
	\end{equation}
\end{lemma}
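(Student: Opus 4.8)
The plan is to carry out a one-step Lyapunov (estimating-sequence) analysis of the $x$-block of Algorithm~\ref{alg:SADOM}, showing that $\Psi_x^k$ contracts up to the stated additive stochastic and bias terms. First I would remove the implicit dependence on $x^{k+1}$ in line~\ref{scary:line:x:2}: collecting the $x^{k+1}$ terms turns the update into the prox-type identity
\begin{equation*}
\frac{1}{\eta}(x^{k+1}-x^k) + \alpha(x^{k+1}-x_g^k) = -\left[\gg(x_g^k,\bxi^k) - \nu x_g^k - y^{k+1}\right],
\end{equation*}
which is the workhorse of the whole argument. I then take the inner product of both sides with $x^{k+1}-x^*$ and apply the inner-product representation \eqref{eq:inner_product_representation} to each term on the left, converting them into the squared norms $\sqn{x^{k+1}-x^*}$, $\sqn{x^k-x^*}$, $\sqn{x^{k+1}-x^k}$, $\sqn{x_g^k-x^*}$ and $\sqn{x^{k+1}-x_g^k}$. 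The $\sqn{x^{k+1}-x^*}$ term then carries exactly the coefficient $(1/\eta+\alpha)$ demanded by $\Psi_x^{k+1}$, while $\frac{1}{\eta}\sqn{x^k-x^*}$ appears as the leading source term, so that its ratio $\tfrac{1/\eta}{1/\eta+\alpha}=\tfrac{1}{1+\eta\alpha}$ already produces the second contraction factor on the squared-norm part of $\Psi_x^k$.

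In parallel, to control the Bregman part of $\Psi_x^{k+1}$ I use $L$-smoothness (Definition~\ref{def:L_smooth}) along $x_f^{k+1}-x_g^k=\tau_2(x^{k+1}-x^k)$ (line~\ref{scary:line:x:3}):
\begin{equation*}
F(x_f^{k+1}) \leq F(x_g^k) + \tau_2\dotprod{\nabla F(x_g^k)}{x^{k+1}-x^k} + \frac{L\tau_2^2}{2}\sqn{x^{k+1}-x^k},
\end{equation*}
which expresses $\bg_F(x_f^{k+1},x^*)$ through $F(x_g^k)$, $\nabla F(x_g^k)$ and $\sqn{x^{k+1}-x^k}$. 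Next I would assemble the two pieces. The gradient inner product is matched against the optimality condition \eqref{opt:x}, i.e. $y^*=\nabla F(x^*)-\nu x^*$, which both generates the cross term $2\dotprod{y^{k+1}-y^*}{x^{k+1}-x^*}$ from the $y^{k+1}$ contribution and leaves a gradient-difference term $-2\dotprod{\nabla F(x_g^k)-\nabla F(x^*)}{x^{k+1}-x^*}$. Splitting the latter along $(x^{k+1}-x_g^k)+(x_g^k-x^*)$ and using the Bregman identity together with strong convexity (Definition~\ref{def:strongly_convex}, Assumption~\ref{ass:strongly_convex}) yields, after inserting $\nu=\mu/2$ (eq.~\eqref{scary:nu}), exactly the residual $-\bigl(\bg_F(x_g^k,x^*)-\frac{\nu}{2}\sqn{x_g^k-x^*}\bigr)$, with the remaining $\sqn{x_g^k-x^*}$ slack absorbing the $\alpha\sqn{x_g^k-x^*}$ source term (here $\tau_1$ from eq.~\eqref{scary:tau1} and $x_g^k=\tau_1 x^k+(1-\tau_1)x_f^k$ redistribute this slack onto $x^k$ and $x_f^k$). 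The factor $2/\tau_2$ in $\Psi_x$ combined with strong convexity produces the $1-\tau_2/2$ contraction on the Bregman part, and the positive leftovers $\sqn{x^{k+1}-x^k}$, $\sqn{x^{k+1}-x_g^k}$ are killed by the negative terms of the norm identity once $\tau_2=\sqrt{\mu/L}$ (eq.~\eqref{scary:tau2}), $\eta$ (eq.~\eqref{scary:eta_aw}), $\alpha=\mu/4$ (eq.~\eqref{scary:alpha_aw}) and the hypothesis $\beta\le 1/(2L)$ are used; taking $\max$ of the two factors bounds both parts simultaneously.

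The main obstacle will be taking expectations of the gradient term, since $x^{k+1}$ is correlated with the noisy oracle $\gg(x_g^k,\bxi^k)$. I would write $\gg(x_g^k,\bxi^k)=\nabla F(x_g^k)+\boldsymbol{\omega}(x_g^k)+\zeta^k$ with $\zeta^k$ zero-mean. The bias term $-2\dotprod{\boldsymbol{\omega}(x_g^k)}{x^{k+1}-x^*}$ is treated with Young's inequality \eqref{eq:fenchel_young_inequality} at parameter $\lambda=\mu/4$: the quadratic remainder $\frac{\mu}{4}\sqn{x^{k+1}-x^*}$ is absorbed by the $\alpha$-slack, and the bound $\sqn{\boldsymbol{\omega}}\le\Delta^2$ from \eqref{eq:bound_delta_aw} gives the term $\frac{4}{\mu}\Delta^2$. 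The zero-mean term $-2\dotprod{\zeta^k}{x^{k+1}-x^*}$ does not vanish in expectation because of the correlation; I would decouple it by writing $x^{k+1}=\tilde{x}^{k+1}-\frac{\eta}{1+\eta\alpha}\zeta^k$, where $\tilde{x}^{k+1}$ is the noise-free surrogate built from $\expect{\gg}=\nabla F+\boldsymbol{\omega}$ (eq.~\eqref{eq:unbias_aw}). Then $\expect{\dotprod{\zeta^k}{\tilde{x}^{k+1}-x^*}}=0$, and the remaining $\frac{2\eta}{1+\eta\alpha}\expect{\sqn{\zeta^k}}$ is controlled by the variance bound \eqref{eq:variance}, $\expect{\sqn{\zeta^k}}\le\sigma^2$, which after inserting the value of $\eta$ delivers exactly the $\frac{\sigma^2}{\tau_2}$ term. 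Balancing the Young constants so that every quadratic remainder is genuinely absorbed by the contraction factors while producing the sharp constants $\frac{\sigma^2}{\tau_2}$ and $\frac{4}{\mu}\Delta^2$, rather than order-of-magnitude versions, is the most delicate bookkeeping in the proof.
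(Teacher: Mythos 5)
Your overall architecture is the paper's: an energy identity for $\sqn{x^{k+1}-x^*}$, lines~\ref{scary:line:x:1} and~\ref{scary:line:x:3} to redistribute the gradient inner product, smoothness plus strong convexity for the Bregman part, the optimality condition \eqref{opt:x} to generate the $y$ cross term, Young's inequality for the bias, and the variance bound for the noise. However, two of the concrete steps you commit to would fail as written. First, the bias absorption: you apply \eqref{eq:fenchel_young_inequality} to $-2\dotprod{\boldsymbol{\omega}(x_g^k)}{x^{k+1}-x^*}$ with $\lambda=\mu/4$ and propose to absorb the remainder $\frac{\mu}{4}\sqn{x^{k+1}-x^*}$ into the $\alpha$-slack. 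Since $\alpha=\mu/4$ by \eqref{scary:alpha_aw}, this consumes the \emph{entire} term $-\alpha\sqn{x^{k+1}-x^*}$, so the left-hand side carries coefficient $1/\eta$ rather than $1/\eta+\alpha$ and the factor $1/(1+\eta\alpha)$ in \eqref{scary:eq:x} is no longer attainable: you get no contraction on the squared-norm part of $\Psi_x$. The paper avoids this by first shifting the test vector from $x^{k+1}-x^*$ to $x^k-x^*$ (via line~\ref{scary:line:x:3}) and then to $x_g^k$-centered quantities (via line~\ref{scary:line:x:1}), so the $\frac{\mu}{4}$ remainder lands on $\sqn{x_g^k-x^*}$, where it is absorbed by the strong-convexity slack and produces exactly the $+\frac{\nu}{2}\sqn{x_g^k-x^*}$ in the residual term $-\bigl(\bg_F(x_g^k,x^*)-\frac{\nu}{2}\sqn{x_g^k-x^*}\bigr)$.

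Second, your decoupling of the zero-mean noise is based on the formula $x^{k+1}=\tilde{x}^{k+1}-\frac{\eta}{1+\eta\alpha}\zeta^k$, which is not correct: lines~\ref{scary:line:x:2} and~\ref{scary:line:y:2} form a coupled implicit system in $(x^{k+1},y^{k+1})$, and $\gg(x_g^k,\bxi^k)$ enters both, so $\zeta^k$ propagates into $x^{k+1}$ through $y^{k+1}$ as well; the true coefficient of $\zeta^k$ comes from inverting the $2\times 2$ system and differs from $-\eta/(1+\eta\alpha)$. The idea is repairable (the corrected coefficient is still $O(\eta)$ and the surrogate argument goes through once $\tilde y^{k+1}$ is decoupled too), but the paper's route is cleaner and sidesteps the issue entirely: after line~\ref{scary:line:x:3} the gradient term is tested against the $\mathcal{F}_k$-measurable $x^k-x^*$, whose expectation is handled directly by \eqref{eq:unbias_aw}, while the residual piece $\frac{1}{\tau_2}\dotprod{\mG_k-\nabla F(x_g^k)}{x_f^{k+1}-x_g^k}$ is bounded by Young's inequality with parameter $\beta$, producing the variance term $\frac{\beta}{\tau_2}\sqn{\mG_k-\nabla F(x_g^k)}$ and a $\frac{1}{2\beta}\sqn{x_f^{k+1}-x_g^k}$ remainder that is cancelled by the $-\frac{1}{\eta\tau_2^2}\sqn{x_f^{k+1}-x_g^k}$ term via the choice \eqref{scary:eta_aw}. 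You should rework both allocations along these lines before the constants in \eqref{scary:eq:x} can be claimed.
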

\begin{proof}
	\begin{align*}
	\frac{1}{\eta}\sqn{x^{k+1}  - x^*}
	&=
	\frac{1}{\eta}\sqn{x^k - x^*}+\frac{2}{\eta}\<x^{k+1} - x^k,x^{k+1}- x^*> - \frac{1}{\eta}\sqn{x^{k+1} - x^k}.
	\end{align*}
	Let $\mG_{k} = \gg(x_g^k, \bxi^k)$ then using Line~\ref{scary:line:x:2} of Algorithm~\ref{alg:SADOM} we get
	\begin{align*}
	\frac{1}{\eta}\sqn{x^{k+1}  - x^*}
	&=
	\frac{1}{\eta}\sqn{x^k - x^*}
	+
	2\alpha\<x_g^k - x^{k+1},x^{k+1}- x^*>
	\\&-
	2\<\mG_k - \nu x_g^k - y^{k+1},x^{k+1} - x^*>
	-
	\frac{1}{\eta}\sqn{x^{k+1} - x^k}
	\\&=
	\frac{1}{\eta}\sqn{x^k - x^*}
	+
	2\alpha\<x_g^k - x^*- x^{k+1} + x^*,x^{k+1}- x^*>
	\\&-
	2\<\mG_k - \nu x_g^k - y^{k+1},x^{k+1} - x^*>
	-
	\frac{1}{\eta}\sqn{x^{k+1} - x^k}
	\\&\leq
	\frac{1}{\eta}\sqn{x^k - x^*}
	-
	\alpha\sqn{x^{k+1} - x^*} + \alpha\sqn{x_g^k - x^*}
	\\&-
	2\<\mG_k - \nu x_g^k - y^{k+1},x^{k+1} - x^*>
	-
	\frac{1}{\eta}\sqn{x^{k+1} - x^k}.
	\end{align*}
	Using optimality condition \eqref{opt:x} we get
	\begin{align*}
	\frac{1}{\eta}\sqn{x^{k+1}  - x^*}
	&\leq
	\frac{1}{\eta}\sqn{x^k - x^*}
	-
	\alpha\sqn{x^{k+1} - x^*} + \alpha\sqn{x_g^k - x^*}
	\\&-
	\frac{1}{\eta}\sqn{x^{k+1} - x^k}
	-2\<\mG_k - \g F(x^*),x^{k+1} - x^*>
	\\&+
	2\nu\< x_g^k - x^*,x^{k+1} - x^*>
	+
	2\< y^{k+1} - y^*,x^{k+1} - x^*>.
	\end{align*}
	Using Line~\ref{scary:line:x:3} of Algorithm~\ref{alg:SADOM} we get
	\begin{align*}
	\frac{1}{\eta}\sqn{x^{k+1}  - x^*}
	&\leq
	\frac{1}{\eta}\sqn{x^k - x^*}
	-
	\alpha\sqn{x^{k+1} - x^*} + \alpha\sqn{x_g^k - x^*}
	\\&-
	\frac{1}{\eta\tau_2^2}\sqn{x_f^{k+1} - x_g^k}
	-2\<\mG_k - \g F(x^*),x^k - x^*>
	\\&+
	2\nu\< x_g^k - x^*,x^k - x^*>
	+
	2\< y^{k+1} - y^*,x^{k+1} - x^*>
	\\&-
	\frac{2}{\tau_2}\<\mG_k - \g F(x^*),x_f^{k+1} - x_g^k>
	+
	\frac{2\nu}{\tau_2}\< x_g^k - x^*,x_f^{k+1} - x_g^k>
	\\&=
	\frac{1}{\eta}\sqn{x^k - x^*}
	-
	\alpha\sqn{x^{k+1} - x^*} + \alpha\sqn{x_g^k - x^*}
	\\&-
	\frac{1}{\eta\tau_2^2}\sqn{x_f^{k+1} - x_g^k}
	-2\<\mG_k - \g F(x^*),x^k - x^*>
	\\&+
	2\nu\< x_g^k - x^*,x^k - x^*>
	+
	2\< y^{k+1} - y^*,x^{k+1} - x^*>
	\\&-
	\frac{2}{\tau_2}\<\mG_k - \g F(x^*),x_f^{k+1} - x_g^k>
	\\&+
	\frac{\nu}{\tau_2}\left(\sqn{x_f^{k+1} - x^*} - \sqn{x_g^k - x^*}-\sqn{x_f^{k+1} - x_g^k}\right)
 \\&\leq
	\frac{1}{\eta}\sqn{x^k - x^*}
	-
	\alpha\sqn{x^{k+1} - x^*} + \alpha\sqn{x_g^k - x^*}
	\\&-
	\frac{1}{\eta\tau_2^2}\sqn{x_f^{k+1} - x_g^k}
	-2\<\mG_k - \g F(x^*),x^k - x^*> 
    \\&+ 2\nu\< x_g^k - x^*,x^k - x^*>
	+
	2\< y^{k+1} - y^*,x^{k+1} - x^*>
	\\&
	+
	\frac{\nu}{\tau_2}\left(\sqn{x_f^{k+1} - x^*} - \sqn{x_g^k - x^*}-\sqn{x_f^{k+1} - x_g^k}\right)
    \\& -\frac{2}{\tau_2} \underbrace{\dotprod{\mG_k - \nabla F(x_g^{k})}{x_f^{k+1} - x_g^k}}_{\circledOne} 
    \\& -\frac{2}{\tau_2} \underbrace{\dotprod{\nabla F(x_g^{k}) - \nabla F(x^{*})}{x_f^{k+1} - x_g^k}}_{\circledTwo} .
	\end{align*}
 
        Find the upper estimate for the term $\circledOne$:
	\begin{eqnarray*}
	    -\frac{2}{\tau_2} \dotprod{\mG_k - \nabla F(x_g^{k})}{x_f^{k+1} - x_g^k} &=& \frac{2}{\tau_2} \dotprod{\mG_k - \nabla F(x_g^{k})}{x_g^k - x_f^{k+1}}
             \\
             &\overset{\eqref{eq:fenchel_young_inequality}}{\leq}& \frac{2}{\tau_2} \left( \frac{\aw{\beta}}{2 
             } \norms{\mG_k - \nabla F(x_g^{k})}^2 
             + \frac{
             1}{2 \aw{\beta}} \norms{x_f^{k+1} - x_g^k}^2\right).
	\end{eqnarray*}
        Find the upper estimate for the term $\circledTwo$:
        \begin{eqnarray*}
            -\frac{2}{\tau_2} \dotprod{\nabla F(x_g^{k}) - \nabla F(x^{*})}{x_f^{k+1} - x_g^k} &=& -\frac{2}{\tau_2} \dotprod{\nabla F(x_g^{k})}{x_f^{k+1} - x_g^k}
            \\
            &\leq& \frac{2}{\tau_2} \left( F(x_g^{k}) - F(x_f^{k+1}) + \frac{L}{2} \norms{x_f^{k+1} - x_g^k}^2\right).
        \end{eqnarray*}

        Substituting the obtained estimates we get:
        \begin{eqnarray*}
            \frac{1}{\eta}\sqn{x^{k+1}  - x^*} &\leq&  \frac{1}{\eta}\sqn{x^k - x^*} - \alpha\sqn{x^{k+1} - x^*} + \alpha\sqn{x_g^k - x^*} 
            \\&&- \frac{1}{\eta\tau_2^2}\sqn{x_f^{k+1} - x_g^k}
	        -2\<\mG_k - \g F(x^*),x^k - x^*> 
            \\&& + 2\nu\< x_g^k - x^*,x^k - x^*> + 2\< y^{k+1} - y^*,x^{k+1} - x^*>
            \\
            && + \frac{\nu}{\tau_2}\left(\sqn{x_f^{k+1} - x^*} - \sqn{x_g^k - x^*}-\sqn{x_f^{k+1} - x_g^k}\right) 
            \\
            && + \frac{2}{\tau_2} \left( F(x_g^{k}) - F(x_f^{k+1}) + 
            \aw{\frac{1/\beta + L}{2} \cdot} \norms{x_f^{k+1} - x_g^k}^2 \right) 
            \\&& + \frac{\aw{\beta}}{\tau_2}
            \norms{\mG_k - \nabla F(x_g^{k})}^2
            \\&&
            = \frac{1}{\eta}\sqn{x^k - x^*} - \alpha\sqn{x^{k+1} - x^*} 
            \\&&+ \alpha\sqn{x_g^k - x^*} + \left( \frac{
            \aw{1/\beta + L}}{\tau_2}- \frac{1}{\eta\tau_2^2} \right)\sqn{x_f^{k+1} - x_g^k}
	    \\
            && -2\<\mG_k - \g F(x^*),x^k - x^*> + 2\nu\< x_g^k - x^*,x^k - x^*>
            \\&& + 2\< y^{k+1} - y^*,x^{k+1} - x^*> + \frac{\aw{\beta}}{\tau_2 
            } \norms{\mG_k - \nabla F(x_g^{k})}^2
            \\
            && + \frac{\nu}{\tau_2}\left(\sqn{x_f^{k+1} - x^*} - \sqn{x_g^k - x^*}-\sqn{x_f^{k+1} - x_g^k}\right) 
            \\
            && + \frac{2}{\tau_2} \left(  - F(x_f^{k+1}) + F(x_g^{k}) \pm F(x^*) + \dotprod{\nabla F(x^*)}{x^* - x_f^{k+1}} \right.
            \\&& \left. - \dotprod{\nabla F(x^*)}{x^* - x_g^k} \right)
            \\&&
            = \frac{1}{\eta}\sqn{x^k - x^*} - \alpha\sqn{x^{k+1} - x^*} 
            \\&& + \alpha\sqn{x_g^k - x^*} + \left( \frac{
            \aw{1/\beta + L} - \nu}{\tau_2}- \frac{1}{\eta\tau_2^2} \right)\sqn{x_f^{k+1} - x_g^k}
	    \\
            && -2\<\mG_k - \g F(x^*),x^k - x^*> + 2\nu\< x_g^k - x^*,x^k - x^*> 
            \\&&+ 2\< y^{k+1} - y^*,x^{k+1} - x^*>
            - \frac{2}{\tau_2} \left(D_f(x_f^{k+1}) - D_f(x_g^k) \right)  \\&&+ \frac{\nu}{\tau_2}\left(\sqn{x_f^{k+1} - x^*} - \sqn{x_g^k - x^*}\right)
            + \frac{\aw{\beta}}{\tau_2 
            } \norms{\mG_k - \nabla F(x_g^{k})}^2.
        \end{eqnarray*}

        Taking expectation on $\bxi^k$ we have:
        \begin{eqnarray*}
             \frac{1}{\eta}\mathbb{E}\left[\norms{x^{k+1}  - x^*}^2\right] &\leq& \frac{1}{\eta}\expect{\norms{x^k - x^*}^2} - \alpha \expect{\norms{x^{k+1} - x^*}^2} 
             \\&&+ \alpha\expect{\norms{x_g^k - x^*}^2} 
             -2 \expect{\dotprod{\mG_k - \g F(x^*)}{x^k - x^*}} 
             \\&&+ 2\nu \expect{\dotprod{ x_g^k - x^*}{x^k - x^*}} 
            + 2\expect{\dotprod{ y^{k+1} - y^*}{x^{k+1} - x^*}} 
            \\&& - \frac{2}{\tau_2} \left(\expect{D_f(x_f^{k+1}) - D_f(x_g^k)} \right) 
            \\
            &&  + \frac{\nu}{\tau_2}\left(\expect{\norms{x_f^{k+1} - x^*}^2 - \norms{x_g^k - x^*}^2}\right)
            \\
            && + \left( \frac{
            \aw{1/\beta + L} - \nu}{\tau_2}- \frac{1}{\eta\tau_2^2} \right) \expect{\norms{x_f^{k+1} - x_g^k}^2} 
            \\&& + \frac{\aw{\beta}}{\tau_2 
            } \norms{\mG_k - \nabla F(x_g^{k})}^2 
            \\&&
            \overset{\eqref{eq:unbias_aw},\eqref{eq:variance}}{\leq} \frac{1}{\eta}\expect{\norms{x^k - x^*}^2} 
            \\&& - \alpha \expect{\norms{x^{k+1} - x^*}^2} + \alpha\expect{\norms{x_g^k - x^*}^2} 
	    \\
            && -2 \dotprod{\nabla F(x_g^k) + \aw{\boldsymbol{\omega}(x_g^k)} - \g F(x^*)}{x^k - x^*} 
            \\&& + 2\nu \dotprod{ x_g^k - x^*}{x^k - x^*}
            + 2\expect{\dotprod{ y^{k+1} - y^*}{x^{k+1} - x^*}} 
            \\&& - \frac{2}{\tau_2} \left(\expect{D_f(x_f^{k+1})} - \expect{D_f(x_g^k)} \right) 
            \\
            &&   + \frac{\nu}{\tau_2}\left(\expect{\norms{x_f^{k+1} - x^*}^2} - \expect{\norms{x_g^k - x^*}^2}\right)
            \\
            &&  + \left( \frac{
            \aw{1/\beta + L}- \nu}{\tau_2}- \frac{1}{\eta\tau_2^2} \right) \expect{\norms{x_f^{k+1} - x_g^k}^2} + \frac{\aw{\beta} \sigma^2}{\tau_2 
            }.
        \end{eqnarray*}

    \aw{Using

    \begin{equation*}
        2 \dotprod{\boldsymbol{\omega}(x_g^k)}{x^k - x^*} \leq \frac{4}{\mu} \norms{\boldsymbol{\omega}(x_g^k)}^2 + \frac{\mu}{4} \norms{x_g^k - x^*}^2.
    \end{equation*}}
	
    And Line~\ref{scary:line:x:1} of Algorithm~\ref{alg:SADOM} we get
	\begin{align*}
	\frac{1}{\eta}\expect{\sqn{x^{k+1}  - x^*}}
	&\leq
	\frac{1}{\eta}\sqn{x^k - x^*}
	-
	\alpha\expect{\sqn{x^{k+1} - x^*}} + \alpha\sqn{x_g^k - x^*}
	\\&+
	\left(\frac{ 
    \aw{1/\beta + L} - \nu}{\tau_2}-\frac{1}{\eta\tau_2^2}\right)
	\expect{\sqn{x_f^{k+1} - x_g^k}}
    \\&-2\<\g F(x_g^k) - \g F(x^*),x_g^k - x^*>
	+
	2\nu\sqn{x_g^k - x^*}
	\\&+
	\frac{2(1-\tau_1)}{\tau_1}\<\g F(x_g^k) - \g F(x^*),x_f^k - x_g^k>
	\\&+
	\frac{2\nu(1-\tau_1)}{\tau_1}\<x_g^k - x_f^k,x_g^k - x^*>
	\\&+
	2\expect{\< y^{k+1} - y^*,x^{k+1} - x^*>} + \frac{\aw{\beta}\sigma^2}{\tau_2 
    }
	\\&-
	\frac{2}{\tau_2}\left(\expect{\bg_f(x_f^{k+1},x^*)} - \bg_f(x_g^k,x^*)\right)
	\\&+
	\frac{\nu}{\tau_2}\left(\expect{\sqn{x_f^{k+1} - x^*}} - \sqn{x_g^k - x^*}\right) 
    \\&+ \aw{\frac{4}{\mu} \Delta^2 + \frac{\mu}{4} \norms{x_g^k - x^*}^2}
	\\&=
	\frac{1}{\eta}\sqn{x^k - x^*}
	-
	\alpha\expect{\sqn{x^{k+1} - x^*}} 
    \\&+ \alpha\sqn{x_g^k - x^*}
	+
	\left(\frac{
    \aw{1/\beta + L}- \nu}{\tau_2}-\frac{1}{\eta\tau_2^2}\right)
	\expect{\sqn{x_f^{k+1} - x_g^k}}
	\\&-2\<\g F(x_g^k) - \g F(x^*),x_g^k - x^*>
	+
	2\nu\sqn{x_g^k - x^*}
	\\&+
	\frac{2(1-\tau_1)}{\tau_1}\<\g F(x_g^k) - \g F(x^*),x_f^k - x_g^k>
	\\&+
	\frac{\nu(1-\tau_1)}{\tau_1}\left(\sqn{x_g^k- x_f^k} + \sqn{x_g^k - x^*} - \sqn{x_f^k - x^*}\right)
	\\&+
	2\expect{\< y^{k+1} - y^*,x^{k+1} - x^*>}
	\\&-
	\frac{2}{\tau_2}\left(\expect{\bg_f(x_f^{k+1},x^*)} - \bg_f(x_g^k,x^*)\right)
	\\&+
	\frac{\nu}{\tau_2}\left(\expect{\sqn{x_f^{k+1} - x^*}} - \sqn{x_g^k - x^*}\right) + \frac{\aw{\beta}\sigma^2}{\tau_2 
    }
    \\&\aw{+}
    \aw{\frac{4 \Delta^2}{\mu} + \frac{\mu}{4} \norms{x_g^k - x^*}^2}.
	\end{align*}
	Using $\mu$-strong convexity of $\bg_F(x,x^*)$ in $x$, which follows from $\mu$-strong convexity of $F(x)$, we get
	\begin{align*}
	\frac{1}{\eta}\expect{\sqn{x^{k+1}  - x^*}}
	&\leq
	\frac{1}{\eta}\sqn{x^k - x^*}
	-
	\alpha\expect{\sqn{x^{k+1} - x^*}} + \alpha\sqn{x_g^k - x^*}
	\\&+
	\left(\frac{
    \aw{1/\beta + L} - \nu}{\tau_2}-\frac{1}{\eta\tau_2^2}\right)
	\expect{\sqn{x_f^{k+1} - x_g^k}}
	-2\bg_F(x_g^k,x^*) 
    \\&- \mu\sqn{x_g^k - x^*}
	+
	2\nu\sqn{x_g^k - x^*}
	\\&+
	\frac{2(1-\tau_1)}{\tau_1}\left(\bg_F(x_f^k,x^*) - \bg_F(x_g^k,x^*) - \frac{\mu}{2}\sqn{x_f^k - x_g^k}\right)
	\\&+
	\frac{\nu(1-\tau_1)}{\tau_1}\left(\sqn{x_g^k- x_f^k} + \sqn{x_g^k - x^*} - \sqn{x_f^k - x^*}\right)
	\\&+
	2\expect{\< y^{k+1} - y^*,x^{k+1} - x^*>}
	\\&-
	\frac{2}{\tau_2}\left(\expect{\bg_f(x_f^{k+1},x^*)} - \bg_f(x_g^k,x^*)\right)
	\\&+
	\frac{\nu}{\tau_2}\left(\expect{\sqn{x_f^{k+1} - x^*}} - \sqn{x_g^k - x^*}\right) + \frac{\aw{\beta}\sigma^2}{\tau_2 
    }
	\\&=
	\frac{1}{\eta}\sqn{x^k - x^*}
	-
	\alpha\expect{\sqn{x^{k+1} - x^*}}
	\\&+
	\frac{2(1-\tau_1)}{\tau_1}\left(\bg_F(x_f^k,x^*) - \frac{\nu}{2}\sqn{x_f^k - x^*}\right)
	\\&-
	\frac{2}{\tau_2}\left(\expect{\bg_f(x_f^{k+1},x^*)}-\frac{\nu}{2}\expect{\sqn{x_f^{k+1} - x^*}} \right)
	\\&+
	2\expect{\< y^{k+1} - y^*,x^{k+1} - x^*>}
	+
	2\left(\frac{1}{\tau_2}-\frac{1}{\tau_1}\right)\bg_F(x_g^k,x^*)
	\\&+
	\left(\alpha - \mu + \nu+\frac{\nu}{\tau_1}-\frac{\nu}{\tau_2}\right)\sqn{x_g^k - x^*}
	\\&+
	\left(\frac{
    \aw{1/\beta + L} - \nu}{\tau_2}-\frac{1}{\eta\tau_2^2}\right)
	\expect{\sqn{x_f^{k+1} - x_g^k}}
	\\&+
	\frac{(1-\tau_1)(\nu-\mu)}{\tau_1}\sqn{x_f^k - x_g^k} 
    \\&+ \frac{\sigma^2}{\aw{\beta} \tau_2 
    }
    \aw{+ \frac{4\Delta^2}{\mu} + \frac{\mu}{4} \norms{x_g^k - x^*}^2}.
	\end{align*}
	Using $\eta$ defined by \eqref{scary:eta_aw}, $\tau_1$ defined by \eqref{scary:tau1} and the fact that $\nu < \mu$ we get
	\begin{align*}
	\frac{1}{\eta}\expect{\sqn{x^{k+1}  - x^*}}
	&\leq
	\frac{1}{\eta}\sqn{x^k - x^*}
	-
	\alpha\expect{\sqn{x^{k+1} - x^*}}
	\\&+
	\frac{2(1-\tau_2/2)}{\tau_2}\left(\bg_F(x_f^k,x^*) - \frac{\nu}{2}\sqn{x_f^k - x^*}\right)
	\\&-
	\frac{2}{\tau_2}\left(\expect{\bg_f(x_f^{k+1},x^*)}-\frac{\nu}{2}\expect{\sqn{x_f^{k+1} - x^*}} \right)
	\\&+
	2\expect{\< y^{k+1} - y^*,x^{k+1} - x^*>}
	\\&-\bg_F(x_g^k,x^*)
	+
	\left(\alpha - \mu + \frac{3\nu}{2} \aw{+ \frac{\mu}{4}} \right)\sqn{x_g^k - x^*} + \frac{\aw{\beta}\sigma^2}{\tau_2 
    } \aw{+ \frac{4}{\mu} \Delta^2}.
	\end{align*}
	Using $\alpha$ defined by \aw{\eqref{scary:alpha_aw}} and $\nu$ defined by \eqref{scary:nu} we get
	\begin{align*}
	\frac{1}{\eta}\expect{\sqn{x^{k+1}  - x^*}}
	&\leq
	\frac{1}{\eta}\sqn{x^k - x^*}
	-
	\alpha\expect{\sqn{x^{k+1} - x^*}}
	\\&+
	\frac{2(1-\tau_2/2)}{\tau_2}\left(\bg_F(x_f^k,x^*) - \frac{\nu}{2}\sqn{x_f^k - x^*}\right)
	\\&-
	\frac{2}{\tau_2}\left(\expect{\bg_f(x_f^{k+1},x^*)}-\frac{\nu}{2}\expect{\sqn{x_f^{k+1} - x^*}} \right)
	\\&+
	2\expect{\< y^{k+1} - y^*,x^{k+1} - x^*>}
	\\&-
	\left(\bg_F(x_g^k,x^*) - \frac{\nu}{2}\sqn{x_g^k - x^*}\right) + \frac{\aw{\beta}\sigma^2}{\tau_2 
    } \aw{+ \frac{4}{\mu} \Delta^2}.
	\end{align*}
	After rearranging and using $\Psi_x^k $ definition \eqref{scary:Psi_x} we get
	\begin{align*}
	\expect{\Psi_x^{k+1}}
	&\leq
	\max\left\{1 - \tau_2/2, 1/(1+\eta\alpha)\right\}\Psi_x^k
	+
	2\expect{\< y^{k+1} - y^*,x^{k+1} - x^*>}
	\\& \quad -
	\left(\bg_F(x_g^k,x^*) - \frac{\nu}{2}\sqn{x_g^k - x^*}\right) + \frac{\aw{\beta}\sigma^2}{\tau_2 
    } \aw{+ \frac{4}{\mu} \Delta^2}
	\end{align*}
\end{proof}

\begin{lemma}
	The following inequality holds:
	\begin{equation}
	\begin{split}\label{scary:eq:1}
	-\sqn{y^{k+1}- y^*}
	&\leq 
	\frac{(1-\vartheta_1)}{\vartheta_1}\sqn{y_f^k - y^*}
	- \frac{1}{\vartheta_2}\sqn{y_f^{k+1} - y^*}
	\\&-
	\left(\frac{1}{\vartheta_1} - \frac{1}{\vartheta_2}\right)\sqn{y_g^k - y^*}
	+
	\left(\vartheta_2- \vartheta_1\right)\sqn{y^{k+1} - y^k}
	\end{split}
	\end{equation}
\end{lemma}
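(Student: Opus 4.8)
The plan is to treat \eqref{scary:eq:1} as a purely algebraic identity: it invokes only the averaging steps in Lines~\ref{scary:line:y:1} and~\ref{scary:line:y:3} of Algorithm~\ref{alg:SADOM}, and not the actual dual update in Line~\ref{scary:line:y:2}. Consequently $y^{k+1}$, $y^k$ and $y_f^k$ may be regarded as free vectors, while $y_g^k$ and $y_f^{k+1}$ are the derived quantities. To keep the bookkeeping light I would introduce the shorthands $p = y^k - y^*$, $r = y_f^k - y^*$ and $s = y^{k+1} - y^*$, so that $y^{k+1} - y^k = s - p$ and the left-hand side of \eqref{scary:eq:1} is just $-\sqn{s}$.

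First I would rewrite Lines~\ref{scary:line:y:1} and~\ref{scary:line:y:3} relative to $y^*$ to get
\begin{equation*}
y_g^k - y^* = \vartheta_1 p + (1-\vartheta_1) r, \qquad y_f^{k+1} - y^* = (\vartheta_1 - \vartheta_2) p + (1-\vartheta_1) r + \vartheta_2 s .
\end{equation*}
Then I would define $Q$ to be the difference between the right-hand side of \eqref{scary:eq:1} and $-\sqn{s}$, so that the target inequality becomes $Q \ge 0$. Expanding each of $\sqn{y_g^k - y^*}$, $\sqn{y_f^{k+1} - y^*}$ and $\sqn{s-p}$ by the inner-product representation \eqref{eq:inner_product_representation} turns $Q$ into a single quadratic form in $p,r,s$, i.e. a combination of $\sqn{p}$, $\sqn{r}$, $\sqn{s}$, $\dotprod{p}{r}$, $\dotprod{p}{s}$ and $\dotprod{r}{s}$.

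The heart of the argument is a cancellation. The particular coefficients $\tfrac{1-\vartheta_1}{\vartheta_1}$, $-\tfrac{1}{\vartheta_2}$, $-\left(\tfrac{1}{\vartheta_1}-\tfrac{1}{\vartheta_2}\right)$ and $\vartheta_2-\vartheta_1$ appearing in \eqref{scary:eq:1} are engineered precisely so that the coefficients of $\sqn{p}$, $\dotprod{p}{r}$ and $\dotprod{p}{s}$ in $Q$ all vanish identically, while the coefficients of $\sqn{r}$ and $\sqn{s}$ both equal $1-\vartheta_1$ and the coefficient of $\dotprod{r}{s}$ equals $-2(1-\vartheta_1)$. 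What survives therefore collapses to
\begin{equation*}
Q = (1-\vartheta_1)\left(\sqn{r} - 2\dotprod{r}{s} + \sqn{s}\right) = (1-\vartheta_1)\sqn{y_f^k - y^{k+1}},
\end{equation*}
which is nonnegative whenever $\vartheta_1 \le 1$; this delivers exactly $-\sqn{y^{k+1}-y^*} \le$ (right-hand side), as claimed.

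The only genuine work is the routine but error-prone verification of these cancellations, so I expect the coefficient bookkeeping to be the main obstacle; since every cancellation is forced by the structure, the computation is entirely deterministic once the substitutions are in place. I would also record the standing requirements $\vartheta_1 \in (0,1]$ and $\vartheta_2 \neq 0$, which are needed both for the coefficients in \eqref{scary:eq:1} to be well defined and for the leftover term $(1-\vartheta_1)\sqn{y_f^k - y^{k+1}}$ to be nonnegative; these are guaranteed by the parameter choices fixed in the remainder of the proof.
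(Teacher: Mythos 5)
Your proof is correct, and the residual term you isolate, $Q=(1-\vartheta_1)\sqn{y_f^k-y^{k+1}}$, is exactly the slack the paper discards, so the two arguments prove the same identity-plus-one-inequality. The route is organized differently, though. The paper first combines Lines~\ref{scary:line:y:1} and~\ref{scary:line:y:3} into the vector identity
\begin{equation*}
\tfrac{\vartheta_1}{\vartheta_2}\,(y_f^{k+1}-y^*)+\left(1-\tfrac{\vartheta_1}{\vartheta_2}\right)(y_g^k-y^*)
=\vartheta_1\,(y^{k+1}-y^*)+(1-\vartheta_1)(y_f^k-y^*),
\end{equation*}
then applies the exact three-term expansion of $\sqn{\lambda a+(1-\lambda)b}$ on the left and Jensen's inequality (convexity of $\sqn{\cdot}$, valid for $\vartheta_1\in[0,1]$) on the right, and finally converts the leftover $\sqn{y_f^{k+1}-y_g^k}$ term into $(\vartheta_2-\vartheta_1)\sqn{y^{k+1}-y^k}$ via Line~\ref{scary:line:y:3}. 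This makes the cancellation structural rather than something to be verified coefficient by coefficient. Your direct expansion in $p,r,s$ is more mechanical but fully rigorous and arguably easier to audit, at the cost of the bookkeeping you flag; it also makes explicit where the single inequality enters ($\vartheta_1\le 1$) and that everything else is an identity. Your recorded side conditions $\vartheta_1\in(0,1]$ and $\vartheta_2\neq 0$ match what the paper's parameter choices \eqref{scary:sigma1} and \eqref{scary:sigma2_aw} guarantee. One small presentational note: to close the argument at the level of the stated lemma you should carry out (or at least display) the coefficient computation you describe, since the lemma as written is exactly that computation.
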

\begin{proof}
	Lines~\ref{scary:line:y:1} and~\ref{scary:line:y:3} of Algorithm~\ref{alg:SADOM} imply
	\begin{align*}
	y_f^{k+1} &= y_g^k + \vartheta_2(y^{k+1} - y_k)\\&=
	y_g^k + \vartheta_2 y^{k+1} - \frac{\vartheta_2}{\vartheta_1}\left(y_g^k - (1-\vartheta_1)y_f^k\right)
	\\&=
	\left(1 - \frac{\vartheta_2}{\vartheta_1}\right)y_g^k + \vartheta_2 y^{k+1} + \left(\frac{\vartheta_2}{\vartheta_1}- \vartheta_2\right) y_f^k.
	\end{align*}
	After subtracting $y^*$ and rearranging we get
	\begin{align*}
	(y_f^{k+1}- y^*)+ \left(\frac{\vartheta_2}{\vartheta_1} - 1\right) (y_g^k - y^*)
	=
	\vartheta_2( y^{k+1} - y^*)+ \left(\frac{\vartheta_2}{\vartheta_1} - \vartheta_2\right)(y_f^k - y^*).
	\end{align*}
	Multiplying both sides by $\frac{\vartheta_1}{\vartheta_2}$ gives
	\begin{align*}
	\frac{\vartheta_1}{\vartheta_2}(y_f^{k+1}- y^*)+ \left(1-\frac{\vartheta_1}{\vartheta_2}\right) (y_g^k - y^*)
	=
	\vartheta_1( y^{k+1} - y^*)+ \left(1 - \vartheta_1\right)(y_f^k - y^*).
	\end{align*}
	Squaring both sides gives
	\begin{align*}
	\frac{\vartheta_1}{\vartheta_2}\sqn{y_f^{k+1} - y^*} &+ \left(1- \frac{\vartheta_1}{\vartheta_2}\right)\sqn{y_g^k - y^*} - \frac{\vartheta_1}{\vartheta_2}\left(1-\frac{\vartheta_1}{\vartheta_2}\right)\sqn{y_f^{k+1} - y_g^k}\\
	&\leq
	\vartheta_1\sqn{y^{k+1} - y^*} + (1-\vartheta_1)\sqn{y_f^k - y^*}.
	\end{align*}
	Rearranging gives
	\begin{align*}
	-\sqn{y^{k+1}- y^*} &\leq -\left(\frac{1}{\vartheta_1} - \frac{1}{\vartheta_2}\right)\sqn{y_g^k - y^*} + \frac{(1-\vartheta_1)}{\vartheta_1}\sqn{y_f^k - y^*} \\
 &- \frac{1}{\vartheta_2}\sqn{y_f^{k+1} - y^*}+
	\frac{1}{\vartheta_2}\left(1 - \frac{\vartheta_1}{\vartheta_2}\right)\sqn{y_f^{k+1} - y_g^k}.
	\end{align*}
	Using Line~\ref{scary:line:y:3} of Algorithm~\ref{alg:SADOM} we get
	\begin{align*}
	-\sqn{y^{k+1}- y^*} &\leq -\left(\frac{1}{\vartheta_1} - \frac{1}{\vartheta_2}\right)\sqn{y_g^k - y^*} + \frac{(1-\vartheta_1)}{\vartheta_1}\sqn{y_f^k - y^*}\\
 &- \frac{1}{\vartheta_2}\sqn{y_f^{k+1} - y^*}+
	\left(\vartheta_2 - \vartheta_1\right)\sqn{y^{k+1} - y^k}.
	\end{align*}
\end{proof}


\begin{lemma}
    \aw{
    Let $\beta$ be defined as follows:
	\begin{equation}\label{scary:beta_aw}
	\beta \leq 1/(2L).
	\end{equation}
    }
	Let $\vartheta_1$ be defined as follows:
	\begin{equation}\label{scary:sigma1}
	\vartheta_1 = (1/\vartheta_2 + 1/2)^{-1}.
	\end{equation}
	Then the following inequality holds:
	\begin{align}\label{scary:eq:y}
	\MoveEqLeft[4]
	\left(\frac{1}{\theta} + \frac{\beta}{2}\right)\expect{\sqn{y^{k+1} - y^*}}
	+
	\frac{\beta}{2\vartheta_2}\expect{\sqn{y_f^{k+1} - y^*}}\nonumber\\
	&\leq
	\frac{1}{\theta}\sqn{y^k - y^*}
	+
	\frac{\beta(1-\vartheta_2/2)}{2\vartheta_2}\sqn{y_f^k - y^*}
	+
	\bg_F(x_g^k, x^*) \nonumber\\
	&- \frac{\nu}{2}\sqn{x_g^k - x^*}
	-
	2\expect{\<x^{k+1} - x^*, y^{k+1} - y^*>}
	\nonumber\\&-
	2\nu^{-1}\expect{\<y_g^k + z_g^k - (y^* + z^*), y^{k+1} - y^*>}
	-
	\frac{\beta}{4}\sqn{y_g^k - y^*}
	\nonumber\\
	&+
	\left(\frac{\beta\vartheta_2^2}{4} - \frac{1}{\theta}\right)\expect{\sqn{y^{k+1} - y^k}} + 
    \aw{\sigma^2 \beta}.
	\end{align}
\end{lemma}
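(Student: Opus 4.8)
The plan is to mirror the $x$-block argument. First I would start from the elementary expansion
$$\frac{1}{\theta}\sqn{y^{k+1}-y^*}=\frac{1}{\theta}\sqn{y^k-y^*}+\frac{2}{\theta}\<y^{k+1}-y^k,y^{k+1}-y^*>-\frac{1}{\theta}\sqn{y^{k+1}-y^k},$$
and substitute Line~\ref{scary:line:y:2} of Algorithm~\ref{alg:SADOM}, i.e. $\tfrac{1}{\theta}(y^{k+1}-y^k)=\beta(\mG_k-\nu x_g^k-y^{k+1})-[\nu^{-1}(y_g^k+z_g^k)+x^{k+1}]$ with $\mG_k=\gg(x_g^k,\bxi^k)$. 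Inserting the optimality condition \eqref{opt:y} in the form $x^*=-\nu^{-1}(y^*+z^*)$, the bracket $[\nu^{-1}(y_g^k+z_g^k)+x^{k+1}]$ splits exactly into the two coupling inner products $-2\<x^{k+1}-x^*,y^{k+1}-y^*>$ and $-2\nu^{-1}\<y_g^k+z_g^k-(y^*+z^*),y^{k+1}-y^*>$ appearing on the right-hand side, which are destined to cancel against the $x$- and $z$-block estimates. Everything then reduces to controlling the gradient-residual term $2\beta\<\mG_k-\nu x_g^k-y^{k+1},y^{k+1}-y^*>$.

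This residual term is the heart of the proof, and I would handle it by introducing the auxiliary function $\phi(x):=F(x)-\tfrac{\nu}{2}\sqn{x}$, which is convex and $(L-\nu)$-smooth, satisfies $\nabla\phi(x^*)=\nabla F(x^*)-\nu x^*=y^*$ by \eqref{opt:x}, and obeys $\bg_\phi(x_g^k,x^*)=\bg_F(x_g^k,x^*)-\tfrac{\nu}{2}\sqn{x_g^k-x^*}$. Writing $y^{k+1}=y^*+(y^{k+1}-y^*)$ yields the decomposition
$$\mG_k-\nu x_g^k-y^{k+1}=\big(\mG_k-\nabla F(x_g^k)\big)+\big(\nabla\phi(x_g^k)-\nabla\phi(x^*)\big)-(y^{k+1}-y^*).$$
I would bound the first two resulting inner products with Young's inequality \eqref{eq:fenchel_young_inequality} (parameter $1$, scaled by $2\beta$): the stochastic piece produces $\beta\norms{\mG_k-\nabla F(x_g^k)}^2+\beta\sqn{y^{k+1}-y^*}$, whose expectation is at most $\beta\sigma^2+\beta\expect{\sqn{y^{k+1}-y^*}}$ by \eqref{eq:variance}; the deterministic piece produces $\beta\norms{\nabla\phi(x_g^k)-\nabla\phi(x^*)}^2+\beta\sqn{y^{k+1}-y^*}$, and co-coercivity of $\phi$ bounds the squared gradient difference by $2(L-\nu)\bg_\phi(x_g^k,x^*)$. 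Since $\beta\le 1/(2L)$ by \eqref{scary:beta_aw}, the factor $2\beta(L-\nu)\le1$, so this piece is at most $\bg_\phi(x_g^k,x^*)=\bg_F(x_g^k,x^*)-\tfrac{\nu}{2}\sqn{x_g^k-x^*}$. The three contributions $+\beta,+\beta,-2\beta$ to $\sqn{y^{k+1}-y^*}$ cancel exactly, leaving $\bg_F(x_g^k,x^*)-\tfrac{\nu}{2}\sqn{x_g^k-x^*}+\beta\sigma^2$ with no stray multiple of $\sqn{y^{k+1}-y^*}$.

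The hard part is exactly this residual, in two ways. One must recognise that the correct Bregman object is $\bg_\phi$ (not $\bg_F$), so that $-\tfrac{\nu}{2}\sqn{x_g^k-x^*}$ carries the right sign to pair with the $x$-block, and the constraint $\beta\le1/(2L)$ is what forces the Bregman coefficient down to $1$. Moreover, in contrast to the $x$-block --- where the bias $\boldsymbol{\omega}(x_g^k)$ could be isolated in an inner product against $x^k-x^*$, a vector independent of $\bxi^k$, and charged to a separate $\Delta^2$ term --- here $y^{k+1}$ is defined \emph{implicitly} and depends on $\bxi^k$, so the stochastic error cannot be separated into mean and fluctuation. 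One is therefore forced to apply Young's inequality to the entire difference $\mG_k-\nabla F(x_g^k)$ and invoke the second-moment bound \eqref{eq:variance}; this already absorbs the bias, which is precisely why the $y$-estimate carries only the term $\sigma^2\beta$ and no $\Delta^2$ contribution.

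Finally, I would add $\tfrac{\beta}{2}$ times inequality \eqref{scary:eq:1} to the accumulated bound. Its left-hand side $-\tfrac{\beta}{2}\sqn{y^{k+1}-y^*}$ upgrades the coefficient of $\sqn{y^{k+1}-y^*}$ on the left from $1/\theta$ to $1/\theta+\beta/2$, while using \eqref{scary:sigma1} ($1/\vartheta_1=1/\vartheta_2+1/2$) its right-hand side contributes $\tfrac{\beta(1-\vartheta_2/2)}{2\vartheta_2}\sqn{y_f^k-y^*}$, the moved term $\tfrac{\beta}{2\vartheta_2}\sqn{y_f^{k+1}-y^*}$, and $-\tfrac{\beta}{4}\sqn{y_g^k-y^*}$ (since $1/\vartheta_1-1/\vartheta_2=1/2$). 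The remaining $\sqn{y^{k+1}-y^k}$ bookkeeping closes by combining $-\tfrac{1}{\theta}$ from the opening identity with $\tfrac{\beta}{2}(\vartheta_2-\vartheta_1)$ and using $\vartheta_2-\vartheta_1=\vartheta_2^2/(2+\vartheta_2)\le\vartheta_2^2/2$, which gives the stated coefficient $\tfrac{\beta\vartheta_2^2}{4}-\tfrac{1}{\theta}$ and yields \eqref{scary:eq:y}.
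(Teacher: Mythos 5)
Your overall route is the paper's: the same expansion of $\frac{1}{\theta}\sqn{y^{k+1}-y^*}$, the same substitution of the $y$-update and of the optimality conditions \eqref{opt:x}, \eqref{opt:y}, the same reduction of the residual to the convex $L$-smooth function $\phi(x)=F(x)-\frac{\nu}{2}\sqn{x}$ via co-coercivity and $\beta\le 1/(2L)$, and the same final use of \eqref{scary:eq:1} with \eqref{scary:sigma1}. But there is a concrete gap in your bookkeeping of the coefficient of $\sqn{y^{k+1}-y^*}$. The paper applies Young's inequality \emph{once}, to the whole vector $\mG_k-\nu x_g^k-(\nabla F(x^*)-\nu x^*)$, which costs a single $+\beta\sqn{y^{k+1}-y^*}$; after cancelling against the $-2\beta\sqn{y^{k+1}-y^*}$ coming from $2\beta\<y^*-y^{k+1},y^{k+1}-y^*>$ it is left with $-\beta\sqn{y^{k+1}-y^*}$ on the right-hand side. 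This reserve is essential: half of it is moved to the left to produce the coefficient $\frac{1}{\theta}+\frac{\beta}{2}$, and the other half is exactly the quantity to which $\frac{\beta}{2}$ times \eqref{scary:eq:1} is applied, generating the terms $\frac{\beta(1-\vartheta_2/2)}{2\vartheta_2}\sqn{y_f^k-y^*}$, $-\frac{\beta}{2\vartheta_2}\sqn{y_f^{k+1}-y^*}$, $-\frac{\beta}{4}\sqn{y_g^k-y^*}$ and $\frac{\beta}{2}(\vartheta_2-\vartheta_1)\sqn{y^{k+1}-y^k}$. You instead apply Young twice with parameter $1$ (once to $\mG_k-\nabla F(x_g^k)$, once to $\nabla\phi(x_g^k)-\nabla\phi(x^*)$), paying $+2\beta\sqn{y^{k+1}-y^*}$, and you explicitly note that this cancels the $-2\beta$ term \emph{exactly}. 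With a zero coefficient there is nothing left either to move to the left-hand side or to feed into \eqref{scary:eq:1}: adding $\frac{\beta}{2}$ times \eqref{scary:eq:1} to an inequality of the form $\frac{1}{\theta}\sqn{y^{k+1}-y^*}\le S$ yields $\left(\frac{1}{\theta}-\frac{\beta}{2}\right)\sqn{y^{k+1}-y^*}\le S+\frac{\beta}{2}R$, which \emph{weakens} the left coefficient rather than upgrading it to $\frac{1}{\theta}+\frac{\beta}{2}$. So the last paragraph of your argument does not go through as written.

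The deficit cannot be repaired simply by shrinking the Young parameters: halving the $\sqn{y^{k+1}-y^*}$ cost of each application doubles the prices $\beta\,\expect{\sqn{\mG_k-\nabla F(x_g^k)}}\le\beta\sigma^2$ and $\beta\sqn{\nabla\phi(x_g^k)-\nabla\phi(x^*)}\le\bg_F(x_g^k,x^*)-\frac{\nu}{2}\sqn{x_g^k-x^*}$, whereas the lemma's right-hand side carries these with coefficient exactly one. To recover the stated constants you must keep the two pieces together through a single Young step, as the paper does, and only split the squared norm $\sqn{\mG_k-\nabla F(x_g^k)+\nabla\phi(x_g^k)-\nabla\phi(x^*)}$ afterwards. (Your instinct that the bias makes this split delicate is sound: the paper's subsequent bound of the form $\sqn{a_1+a_2}\le\sqn{a_1}+\sqn{a_2}$ silently drops a cross term whose conditional expectation is $2\<\boldsymbol{\omega}(x_g^k),a_2>$ and need not be nonpositive. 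But your modification trades that issue for the coefficient deficit above rather than resolving it.)
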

\begin{proof}
	\begin{align*}
	\frac{1}{\theta}\sqn{y^{k+1} - y^*}
	&=
	\frac{1}{\theta}\sqn{y^k - y^*} + \frac{2}{\theta}\<y^{k+1} - y^k , y^{k+1} - y^*> - \frac{1}{\theta}\sqn{y^{k+1} - y^k}. 
	\end{align*}
	Using Line~\ref{scary:line:y:2} of Algorithm~\ref{alg:SADOM} we get
	\begin{align*}
	\frac{1}{\theta}\sqn{y^{k+1} - y^*}
	&=
	\frac{1}{\theta}\sqn{y^k - y^*}
	+
	2\beta\<\mG_k - \nu x_g^k - y^{k+1}, y^{k+1} - y^*>
	\\&-
	2\<\nu^{-1}(y_g^k + z_g^k) + x^{k+1}, y^{k+1} - y^*>
	-
	\frac{1}{\theta}\sqn{y^{k+1} - y^k}.
	\end{align*}
	Using optimality condition \eqref{opt:x} we get
	\begin{align*}
	\frac{1}{\theta}\sqn{y^{k+1} - y^*}
	&=
	\frac{1}{\theta}\sqn{y^k - y^*}
	\\&+
	2\beta\<\mG_k - \nu x_g^k - (\g F(x^*) - \nu x^*) + y^*- y^{k+1}, y^{k+1} - y^*>
	\\&-
	2\<\nu^{-1}(y_g^k + z_g^k) + x^{k+1}, y^{k+1} - y^*>
	-
	\frac{1}{\theta}\sqn{y^{k+1} - y^k}
	\\&=
	\frac{1}{\theta}\sqn{y^k - y^*}
	+
	2\beta\<\mG_k - \nu x_g^k - (\g F(x^*) - \nu x^*), y^{k+1} - y^*>
    \\&-
	2\beta\sqn{y^{k+1} - y^*}
	-
	2\<\nu^{-1}(y_g^k + z_g^k) + x^{k+1}, y^{k+1} - y^*>
	\\&-
	\frac{1}{\theta}\sqn{y^{k+1} - y^k}
	\\&\leq
	\frac{1}{\theta}\sqn{y^k - y^*}
	\\&+
	\beta\sqn{\mG_k - \nu x_g^k - (\g F(x^*) - \nu x^*)}
	-
	\beta\sqn{y^{k+1} - y^*}
	\\&-
	2\<\nu^{-1}(y_g^k + z_g^k) + x^{k+1}, y^{k+1} - y^*>
	-
	\frac{1}{\theta}\sqn{y^{k+1} - y^k}
        \\&\leq
	\frac{1}{\theta}\sqn{y^k - y^*}
	+
	\beta\sqn{\mG_k - \nabla F(x_g^k)} \\&+  \beta\sqn{\nabla F(x_g^k) - \nu x_g^k - (\g F(x^*) - \nu x^*)}
	-
	\beta\sqn{y^{k+1} - y^*}
	\\&-
	2\<\nu^{-1}(y_g^k + z_g^k) + x^{k+1}, y^{k+1} - y^*>
	-
	\frac{1}{\theta}\sqn{y^{k+1} - y^k}.
	\end{align*}
	Function $F(x) - \frac{\nu}{2}\sqn{x}$ is convex and $L$-smooth, which implies
	\begin{align*}
	\frac{1}{\theta}\sqn{y^{k+1} - y^*}
	&\leq
	\frac{1}{\theta}\sqn{y^k - y^*}
	+
	2\beta L\left(\bg_F(x_g^k, x^*) - \frac{\nu}{2}\sqn{x_g^k - x^*}\right)
	\\&-
	\beta\sqn{y^{k+1} - y^*}
	-
	2\<\nu^{-1}(y_g^k + z_g^k) + x^{k+1}, y^{k+1} - y^*>
	\\&-
	\frac{1}{\theta}\sqn{y^{k+1} - y^k} +
	\beta\sqn{\mG_k - \nabla F(x_g^k)}.
	\end{align*}
	Using $\beta$ definition \eqref{scary:beta_aw} we get
	\begin{align*}
	\frac{1}{\theta}\sqn{y^{k+1} - y^*}
	&\leq
	\frac{1}{\theta}\sqn{y^k - y^*}
	+
	\bg_F(x_g^k, x^*) - \frac{\nu}{2}\sqn{x_g^k - x^*}
	\\&-
	\beta\sqn{y^{k+1} - y^*}
	-
	2\<\nu^{-1}(y_g^k + z_g^k) + x^{k+1}, y^{k+1} - y^*>
	\\&-
	\frac{1}{\theta}\sqn{y^{k+1} - y^k} 
    +
	\beta\sqn{\mG_k - \nabla F(x_g^k)}.
	\end{align*}
	Using optimality condition \eqref{opt:y} we get
	\begin{align*}
	\frac{1}{\theta}\sqn{y^{k+1} - y^*}
	&\leq
	\frac{1}{\theta}\sqn{y^k - y^*}
	+
	\bg_F(x_g^k, x^*) - \frac{\nu}{2}\sqn{x_g^k - x^*}
	\\&-
	\beta\sqn{y^{k+1} - y^*} 
    +
	\beta\sqn{\mG_k - \nabla F(x_g^k)}
    \\&-
	2\nu^{-1}\<y_g^k + z_g^k - (y^* + z^*), y^{k+1} - y^*>
	\\&-
	2\<x^{k+1} - x^*, y^{k+1} - y^*>
	-
	\frac{1}{\theta}\sqn{y^{k+1} - y^k}.
	\end{align*}
	Using \eqref{scary:eq:1} together with $\vartheta_1$ definition \eqref{scary:sigma1} we get
	\begin{align*}
	\frac{1}{\theta}\sqn{y^{k+1} - y^*}
	&\leq
	\frac{1}{\theta}\sqn{y^k - y^*}
	+
	\bg_F(x_g^k, x^*) - \frac{\nu}{2}\sqn{x_g^k - x^*}
	\\&-
	\frac{\beta}{2}\sqn{y^{k+1} - y^*} +
	\beta\sqn{\mG_k - \nabla F(x_g^k)}
	+
	\frac{\beta(1-\vartheta_2/2)}{2\vartheta_2}\sqn{y_f^k - y^*}
	\\&-
	\frac{\beta}{2\vartheta_2}\sqn{y_f^{k+1} - y^*}
	-
	\frac{\beta}{4}\sqn{y_g^k - y^*}
	+
	\frac{\beta\left(\vartheta_2- \vartheta_1\right)}{2}\sqn{y^{k+1} - y^k}
	\\&-
	2\nu^{-1}\<y_g^k + z_g^k - (y^* + z^*), y^{k+1} - y^*>
	-
	2\<x^{k+1} - x^*, y^{k+1} - y^*>
	\\&-
	\frac{1}{\theta}\sqn{y^{k+1} - y^k}
	\\&\leq
	\frac{1}{\theta}\sqn{y^k - y^*}
	-
	\frac{\beta}{2}\sqn{y^{k+1} - y^*}
	\\&+
	\frac{\beta(1-\vartheta_2/2)}{2\vartheta_2}\sqn{y_f^k - y^*}
	-
	\frac{\beta}{2\vartheta_2}\sqn{y_f^{k+1} - y^*}
	+
	\bg_F(x_g^k, x^*) 
    \\&- \frac{\nu}{2}\sqn{x_g^k - x^*}
	-
	\frac{\beta}{4}\sqn{y_g^k - y^*}
	+
	\left(\frac{\beta\vartheta_2^2}{4} - \frac{1}{\theta}\right)\sqn{y^{k+1} - y^k}
	\\&-
	2\nu^{-1}\<y_g^k + z_g^k - (y^* + z^*), y^{k+1} - y^*>
	\\&-
	2\<x^{k+1} - x^*, y^{k+1} - y^*> +
	\beta\sqn{\mG_k - \nabla F(x_g^k)}.
	\end{align*}
	Rearranging and taking expectation wrt $\xi_k$ gives 
	\begin{align*}
	\MoveEqLeft[4]
	\left(\frac{1}{\theta} + \frac{\beta}{2}\right)\expect{\sqn{y^{k+1} - y^*}}
	+
	\frac{\beta}{2\vartheta_2}\expect{\sqn{y_f^{k+1} - y^*}}\\
	&\leq
	\frac{1}{\theta}\sqn{y^k - y^*}
	+
	\frac{\beta(1-\vartheta_2/2)}{2\vartheta_2}\sqn{y_f^k - y^*}
	+
	\bg_F(x_g^k, x^*) - \frac{\nu}{2}\sqn{x_g^k - x^*}
	\\&-
	2\expect{\<x^{k+1} - x^*, y^{k+1} - y^*>}
	-
	2\nu^{-1}\expect{\<y_g^k + z_g^k - (y^* + z^*), y^{k+1} - y^*>}
	\\&-
	\frac{\beta}{4}\sqn{y_g^k - y^*}
	+
	\left(\frac{\beta\vartheta_2^2}{4} - \frac{1}{\theta}\right)\expect{\sqn{y^{k+1} - y^k}} + 
    \aw{\sigma^2 \beta}.
	\end{align*}
\end{proof}

\begin{lemma}
	The following inequality holds:
	\begin{equation}\label{scary:eq:2}
	\sqn{m^k}_\mP
	\leq
	8\chi^2\varkappa^2\nu^{-2}\sqn{y_g^k + z_g^k}_\mP + 4\chi(1 - (4\chi)^{-1})\sqn{m^k}_\mP - 4\chi\sqn{m^{k+1}}_\mP.
	\end{equation}
\end{lemma}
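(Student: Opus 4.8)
The plan is to unwind the update for $m^{k+1}$ in Line~\ref{scary:line:m} of Algorithm~\ref{alg:SADOM} and then invoke the contraction property of the gossip matrix from the fourth item of Assumption~\ref{ass:gossip_matrix}. Write $\mW \eqdef \mW(k)\otimes\mI_d$ and $u^k \eqdef \varkappa\nu^{-1}(y_g^k + z_g^k) + m^k$, so that Line~\ref{scary:line:m} becomes $m^{k+1} = (\mI - \mW)u^k$. First I would record the structural facts about $\mW$: since $\mW$ is symmetric with $\ker\mW = \cL$, its range is $\cL^\perp$, whence $\mP\mW = \mW\mP = \mW$. Consequently $\mI - \mW$ commutes with $\mP$ and preserves $\cL^\perp$, so that $\mP m^{k+1} = (\mI - \mW)\mP u^k$. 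In terms of seminorms, $\sqNP{m^{k+1}} = \sqn{\mP m^{k+1}} = \sqn{(\mI-\mW)\mP u^k}$, i.e. only the $\cL^\perp$-component $\mP u^k$ of $u^k$ survives.

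Next I would apply the contraction. Because $\mP u^k \in \cL^\perp$, the fourth item of Assumption~\ref{ass:gossip_matrix} (extended to $\mW(k)\otimes\mI_d$ coordinatewise) gives $\sqn{(\mW - \mI)\mP u^k} \leq (1 - \chi^{-1})\sqn{\mP u^k}$, which is exactly
\begin{equation*}
\sqNP{m^{k+1}} \leq \left(1 - \chi^{-1}\right)\sqNP{u^k} = \left(1 - \chi^{-1}\right)\sqNP{\varkappa\nu^{-1}(y_g^k + z_g^k) + m^k}.
\end{equation*}
It then remains to split the squared seminorm of the sum. Applying the Fenchel--Young inequality \eqref{eq:fenchel_young_inequality} to the cross term in the $\mP$-seminorm yields, for any $\rho > 0$, the bound $\sqNP{\varkappa\nu^{-1}(y_g^k+z_g^k) + m^k} \leq (1+\rho)\sqNP{m^k} + (1+\rho^{-1})\varkappa^2\nu^{-2}\sqNP{y_g^k + z_g^k}$.

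Then I would tune $\rho$ so the two resulting coefficients match the target. Choosing $\rho = \frac{1}{2(\chi - 1)}$ makes $(1-\chi^{-1})(1+\rho) = 1 - \frac{1}{2\chi}$, while $(1-\chi^{-1})(1+\rho^{-1}) = \frac{(\chi-1)(2\chi-1)}{\chi} = 2\chi - 3 + \frac{1}{\chi} \leq 2\chi$ for all $\chi > 1$. This gives $\sqNP{m^{k+1}} \leq \bigl(1 - \frac{1}{2\chi}\bigr)\sqNP{m^k} + 2\chi\varkappa^2\nu^{-2}\sqNP{y_g^k + z_g^k}$. Multiplying by $4\chi$ and rearranging, using $4\chi(1 - (4\chi)^{-1}) = 4\chi - 1$, produces exactly \eqref{scary:eq:2}. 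The degenerate case $\chi = 1$ is handled separately and trivially: there $1 - \chi^{-1} = 0$ forces $\sqNP{m^{k+1}} = 0$, while the right-hand side of \eqref{scary:eq:2} is nonnegative.

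The step I expect to be the main obstacle is the reduction to $\cL^\perp$: Assumption~\ref{ass:gossip_matrix} guarantees the contraction $\sqn{\mW x - x} \leq (1-\chi^{-1})\sqn{x}$ only on the mean-zero subspace, so one must carefully justify that $\mP$ and $\mI - \mW$ commute and that the consensus component of $u^k$ is annihilated before the contraction can be invoked. Everything afterward is the routine bookkeeping of selecting $\rho$ and verifying the coefficient inequality $\frac{(\chi-1)(2\chi-1)}{\chi} \leq 2\chi$.
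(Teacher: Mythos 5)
Your proposal is correct and follows essentially the same route as the paper: rewrite $m^{k+1}=(\mI-\mW)u^k$, pass to the $\cL^\perp$-component via $\mP\mW=\mW\mP=\mW$, apply the gossip contraction to get $\sqNP{m^{k+1}}\leq(1-\chi^{-1})\sqNP{u^k}$, split the sum with the weighted Young inequality at $c=\tfrac{1}{2(\chi-1)}$, and rearrange after multiplying by $4\chi$. The only differences are cosmetic: you make the projection/commutation step and the degenerate case $\chi=1$ explicit, which the paper leaves implicit.
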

\begin{proof}
	Using Line~\ref{scary:line:m} of Algorithm~\ref{alg:SADOM} we get
	\begin{align*}
	\sqn{m^{k+1}}_\mP
	&=
	\sqn{\varkappa\nu^{-1}(y_g^k+z_g^k) + m^k - (\mW(k)\otimes \mI_d)\left[\varkappa\nu^{-1}(y_g^k+z_g^k) + m^k\right]}_{\mP}\\
	&=
	\sqn{\mP\left[\varkappa\nu^{-1}(y_g^k+z_g^k) + m^k\right]- (\mW(k)\otimes \mI_d)\mP\left[\varkappa\nu^{-1}(y_g^k+z_g^k) + m^k\right]}.
	\end{align*}
	Using  property \eqref{ass:gossip_matrix} we obtain
	\begin{align*}
	\sqn{m^{k+1}}_\mP
	&\leq (1 - \chi^{-1})
	\sqn{m^k + \varkappa\nu^{-1}(y_g^k + z_g^k)}_\mP.
	\end{align*}
	Using inequality $\sqn{a+b} \leq (1+c)\sqn{a} + (1+c^{-1})\sqn{b}$ with $c = \frac{1}{2(\chi - 1)}$ we get
	\begin{align*}
	\sqn{m^{k+1}}_\mP
	&\leq (1 - \chi^{-1})
	\left[
	\left(1 + \frac{1}{2(\chi - 1)}\right)\sqn{m^k}_\mP
	+ 
	\left(1 +  2(\chi - 1)\right)\varkappa^2\nu^{-2}\sqn{y_g^k + z_g^k}_\mP
	\right]
	\\&\leq
	(1 - (2\chi)^{-1})\sqn{m^k}_\mP
	+
	2\chi\varkappa^2\nu^{-2}\sqn{y_g^k + z_g^k}_\mP.
	\end{align*}
	Rearranging gives
	\begin{align*}
	\sqn{m^k}_\mP
	&\leq
	8\chi^2\varkappa^2\nu^{-2}\sqn{y_g^k + z_g^k}_\mP + 4\chi(1 - (4\chi)^{-1})\sqn{m^k}_\mP - 4\chi\sqn{m^{k+1}}_\mP.
	\end{align*}
\end{proof}

\begin{lemma}
	Let $\z^k$ be defined as follows:
	\begin{equation}\label{scary:zhat}
	\z^k = z^k - \mP m^k.
	\end{equation}
	Then the following inequality holds:
	\begin{equation}
	\begin{split}\label{scary:eq:z}
	\MoveEqLeft[4]\frac{1}{\varkappa}\sqn{\z^{k+1} - z^*}
	+
	\frac{4}{3\varkappa}\sqn{m^{k+1}}_\mP
        \\
	&\leq
	\left(\frac{1}{\varkappa} - \pi\right)\sqn{\z^k - z^*}
	\\&+
	\left(1-(4\chi)^{-1} +\frac{3\varkappa\pi}{2}\right)\frac{4}{3\varkappa}\sqn{m^k}_\mP
	\\&-
	2\nu^{-1}\<y_g^k + z_g^k - (y^*+z^*),z^k - z^*>
	+
	\varkappa\nu^{-2}\left(1 + 6\chi\right)\sqn{y_g^k+z_g^k}_\mP
	\\&+
	2\pi\sqn{z_g^k  - z^*}
	+
	\left(2\varkappa\pi^2-\pi\right)\sqn{z_g^k - z^k}.
	\end{split}
	\end{equation}
\end{lemma}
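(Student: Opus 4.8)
The plan is to eliminate the gossip matrix from the $z$-recursion by working with the shifted variable $\z^k = z^k - \mP m^k$, and then run the standard three-point expansion of $\sqn{\z^{k+1}-z^*}$, exactly as in the $x$- and $y$-lemmas.

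First I would establish a clean recursion for $\z^{k+1}$. Writing $u^k = \varkappa\nu^{-1}(y_g^k+z_g^k)+m^k$ and $\mW=\mW(k)\otimes\mI_d$, Lines \ref{scary:line:z:2} and \ref{scary:line:m} read $z^{k+1}=z^k+\varkappa\pi(z_g^k-z^k)-\mW u^k$ and $m^{k+1}=u^k-\mW u^k$. Since $\range\mW=\cL^\perp$ and $\ker\mW=\cL$ by Assumption \ref{ass:gossip_matrix}, we have $\mP\mW=\mW$, so $\mP m^{k+1}=\mP u^k-\mW u^k$ and the gossip terms cancel, leaving
\begin{equation*}
\z^{k+1}=\z^k+\varkappa\pi(z_g^k-z^k)-\varkappa\nu^{-1}\mP(y_g^k+z_g^k).
\end{equation*}
A short induction shows every $z$-iterate stays in $\cL^\perp$ (the updates add only elements of $\cL^\perp$), hence $\mP\z^k=\z^k$ and $\mP z^*=z^*$.

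Next I would expand $\tfrac{1}{\varkappa}\sqn{\z^{k+1}-z^*}$ by the three-point rule, splitting the linear term into the $\pi$-piece $2\pi\dotprod{z_g^k-z^k}{\z^k-z^*}$ and the projection piece $-2\nu^{-1}\dotprod{\mP(y_g^k+z_g^k)}{\z^k-z^*}$. The crucial move is the projection piece: using $\mP\z^k=\z^k$, $\mP z^*=z^*$ and the optimality condition \eqref{opt:z} (which gives $\mP(y^*+z^*)=0$), together with $\z^k-z^*=(z^k-z^*)-\mP m^k$, it becomes $-2\nu^{-1}\dotprod{y_g^k+z_g^k-(y^*+z^*)}{z^k-z^*}$ plus a term $2\nu^{-1}\dotprod{m^k}{\mP(y_g^k+z_g^k)}$. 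This recovers the exact cross term needed later to cancel against the $y$-lemma \eqref{scary:eq:y}. I would then apply the inner-product identity \eqref{eq:inner_product_representation} to the $\pi$-piece, again substituting $z^k-z^*=(\z^k-z^*)+\mP m^k$, to produce the $2\pi\sqn{z_g^k-z^*}$, $-\pi\sqn{\z^k-z^*}$ and $-\pi\sqn{z_g^k-z^k}$ contributions (plus further $m^k$ cross terms); and expand the squared increment $\varkappa\sqn{\pi(z_g^k-z^k)-\nu^{-1}\mP(y_g^k+z_g^k)}$, splitting its mixed term by Fenchel–Young \eqref{eq:fenchel_young_inequality} to obtain $2\varkappa\pi^2\sqn{z_g^k-z^k}$ and multiples of $\sqn{y_g^k+z_g^k}_\mP=\sqn{\mP(y_g^k+z_g^k)}$. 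Finally I would add $\tfrac{4}{3\varkappa}\sqn{m^{k+1}}_\mP$ and invoke the preceding lemma \eqref{scary:eq:2} to trade the accumulated $\sqn{m^k}_\mP$ for the contracting coefficient and to absorb the residual $\sqn{y_g^k+z_g^k}_\mP$.

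The hard part will be the bookkeeping of the $m^k$ cross terms and the matching of constants. Every passage between $z^k$ and $\z^k=z^k-\mP m^k$ spawns a term in $\mP m^k$, and all of these (namely $2\nu^{-1}\dotprod{m^k}{\mP(y_g^k+z_g^k)}$ and the $\pi$-weighted $\dotprod{z_g^k-z^k}{\mP m^k}$ and $\dotprod{\z^k-z^*}{\mP m^k}$ contributions) must be distributed by Fenchel–Young so that, once combined with the $\chi$-dependent coefficients of \eqref{scary:eq:2}, they assemble precisely into the advertised $\tfrac{4}{3\varkappa}$, $\bigl(1-(4\chi)^{-1}+\tfrac{3\varkappa\pi}{2}\bigr)\tfrac{4}{3\varkappa}$ and $\varkappa\nu^{-2}(1+6\chi)$ rather than into merely order-correct bounds; getting the $6\chi$ and the $\tfrac{3\varkappa\pi}{2}$ exactly is where the tuned weights in the Fenchel–Young splits are forced.
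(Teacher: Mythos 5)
Your proposal is correct and follows essentially the same route as the paper: derive the clean recursion $\z^{k+1}=\z^k+\varkappa\pi(z_g^k-z^k)-\varkappa\nu^{-1}\mP(y_g^k+z_g^k)$ (the gossip terms cancel exactly as you describe), run the three-point expansion, convert the projection cross term via $z^k,z^*\in\cL^\perp$ and \eqref{opt:z} into $-2\nu^{-1}\<y_g^k+z_g^k-(y^*+z^*),z^k-z^*>$ plus an $m^k$ cross term, and absorb all $\sqn{m^k}_\mP$ contributions through a $\chi$-weighted Young split combined with \eqref{scary:eq:2}. The constant bookkeeping you flag as the hard part is exactly how the paper obtains the $1+6\chi$ and $\tfrac{3\varkappa\pi}{2}$ coefficients.
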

\begin{proof}
	\begin{align*}
	\frac{1}{\varkappa}\sqn{\z^{k+1} - z^*}
	&=
	\frac{1}{\varkappa}\sqn{\z^k - z^*}
	+
	\frac{2}{\varkappa}\<\z^{k+1} - \z^k,\z^k - z^*>
	+
	\frac{1}{\varkappa}\sqn{\z^{k+1} - \z^k}.
	\end{align*}
	Lines~\ref{scary:line:z:2} and~\ref{scary:line:m} of Algorithm~\ref{alg:SADOM} together with $\z^k$ definition \eqref{scary:zhat} imply
	\begin{equation*}
	\z^{k+1} - \z^k = \varkappa\pi(z_g^k - z^k) - \varkappa\nu^{-1}\mP(y_g^k + z_g^k).
	\end{equation*}
	Hence,
	\begin{align*}
	\frac{1}{\varkappa}\sqn{\z^{k+1} - z^*}
	&=
	\frac{1}{\varkappa}\sqn{\z^k - z^*}
	+
	2\pi\<z_g^k - z^k,\z^k - z^*>
	\\&-
	2\nu^{-1}\<\mP(y_g^k + z_g^k),\z^k - z^*>
        +
	\frac{1}{\varkappa}\sqn{\z^{k+1} - \z^k}
	\\&=
	\frac{1}{\varkappa}\sqn{\z^k - z^*}
	+
	\pi\sqn{z_g^k - \mP m^k - z^*} 
    \\&- \pi\sqn{\z^k - z^*} 
    - \pi\sqn{z_g^k - z^k}
	-
	2\nu^{-1}\<\mP(y_g^k + z_g^k),\z^k - z^*>
	\\&+
	\varkappa\sqn{\pi(z_g^k - z^k) - \nu^{-1}\mP(y_g^k+z_g^k)}
	\\&\leq
	\left(\frac{1}{\varkappa} - \pi\right)\sqn{\z^k - z^*}
	\\&+
	2\pi\sqn{z_g^k  - z^*}
	+
	2\pi\sqn{m^k}_\mP
	-
	\pi\sqn{z_g^k - z^k}
	\\&-
	2\nu^{-1}\<\mP(y_g^k + z_g^k),\z^k - z^*>
	+
	2\varkappa\pi^2\sqn{z_g^k - z^k}
	\\&+
	\varkappa\sqn{\nu^{-1}\mP(y_g^k+z_g^k)}
	\\&\leq
	\left(\frac{1}{\varkappa} - \pi\right)\sqn{\z^k - z^*}
	+
	2\pi\sqn{z_g^k  - z^*}
	\\&+
	\left(2\varkappa\pi^2-\pi\right)\sqn{z_g^k - z^k}
	-
	2\nu^{-1}\<\mP(y_g^k + z_g^k),z^k - z^*>
	\\&+
	\varkappa\sqn{\nu^{-1}\mP(y_g^k+z_g^k)}
	+
	2\pi\sqn{m^k}_\mP
	+
	2\nu^{-1}\<\mP(y_g^k + z_g^k),m^k>.
	\end{align*}
	Using the fact that $z^k \in \cL^\perp$ for all $k=0,1,2\ldots$ and optimality condition \eqref{opt:z} we get
	\begin{align*}
	\frac{1}{\varkappa}\sqn{\z^{k+1} - z^*}
	&\leq
	\left(\frac{1}{\varkappa} - \pi\right)\sqn{\z^k - z^*}
	+
	2\pi\sqn{z_g^k  - z^*}
	+
	\left(2\varkappa\pi^2-\pi\right)\sqn{z_g^k - z^k}
	\\&-
	2\nu^{-1}\<y_g^k + z_g^k - (y^*+z^*),z^k - z^*>
	+
	\varkappa\nu^{-2}\sqn{y_g^k+z_g^k}_\mP
	\\&+
	2\pi\sqn{m^k}_\mP
	+
	2\nu^{-1}\<\mP(y_g^k + z_g^k),m^k>.
	\end{align*}
	Using Young's inequality we get
	\begin{align*}
	\frac{1}{\varkappa}\sqn{\z^{k+1} - z^*}
	&\leq
	\left(\frac{1}{\varkappa} - \pi\right)\sqn{\z^k - z^*}
	+
	2\pi\sqn{z_g^k  - z^*}
	+
	\left(2\varkappa\pi^2-\pi\right)\sqn{z_g^k - z^k}
	\\&-
	2\nu^{-1}\<y_g^k + z_g^k - (y^*+z^*),z^k - z^*>
	+
	\varkappa\nu^{-2}\sqn{y_g^k+z_g^k}_\mP
	\\&+
	2\pi\sqn{m^k}_\mP
	+
	3\varkappa\chi\nu^{-2}\sqn{y_g^k + z_g^k}_\mP + \frac{1}{3\varkappa\chi}\sqn{m^k}_\mP.
	\end{align*}
	Using \eqref{scary:eq:2} we get
	\begin{align*}
	\frac{1}{\varkappa}\sqn{\z^{k+1} - z^*}
	&\leq
	\left(\frac{1}{\varkappa} - \pi\right)\sqn{\z^k - z^*}
	+
	2\pi\sqn{z_g^k  - z^*}
	+
	\left(2\varkappa\pi^2-\pi\right)\sqn{z_g^k - z^k}
	\\&-
	2\nu^{-1}\<y_g^k + z_g^k - (y^*+z^*),z^k - z^*>
	+
	\varkappa\nu^{-2}\sqn{y_g^k+z_g^k}_\mP
	\\&+
	2\pi\sqn{m^k}_\mP
	+
	6\varkappa\nu^{-2}\chi\sqn{y_g^k + z_g^k}_\mP 
    \\&+ \frac{4(1 - (4\chi)^{-1})}{3\varkappa}\sqn{m^k}_\mP - \frac{4}{3\varkappa}\sqn{m^{k+1}}_\mP
	\\&=
	\left(\frac{1}{\varkappa} - \pi\right)\sqn{\z^k - z^*}
	+
	2\pi\sqn{z_g^k  - z^*}
	+
	\left(2\varkappa\pi^2-\pi\right)\sqn{z_g^k - z^k}
	\\&-
	2\nu^{-1}\<y_g^k + z_g^k - (y^*+z^*),z^k - z^*>
	+
	\varkappa\nu^{-2}\left(1 + 6\chi\right)\sqn{y_g^k+z_g^k}_\mP
	\\&+
	\left(1-(4\chi)^{-1}+\frac{3\varkappa\pi}{2}\right)\frac{4}{3\varkappa}\sqn{m^k}_\mP - \frac{4}{3\varkappa}\sqn{m^{k+1}}_\mP.
	\end{align*}
\end{proof}

\begin{lemma}
	The following inequality holds:
	\begin{equation}
	\begin{split}\label{scary:eq:3}
	\MoveEqLeft[3]2\<y_g^k + z_g^k - (y^*+z^*),y^k + z^k - (y^*+ z^*)>
	\\&\geq
	2\sqn{y_g^k + z_g^k - (y^*+z^*)}
	\\&+
	\frac{(1-\vartheta_2/2)}{\vartheta_2}\left(\sqn{y_g^k + z_g^k - (y^*+z^*)}  - \sqn{y_f^k + z_f^k - (y^*+z^*)}\right).
	\end{split}
	\end{equation}
\end{lemma}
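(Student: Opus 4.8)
The plan is to reduce the claimed inequality to an exact algebraic identity from which one nonnegative term is then discarded. The structural observation that makes this work is that Lines~\ref{scary:line:y:1} and~\ref{scary:line:z:1} of Algorithm~\ref{alg:SADOM} use the \emph{same} averaging parameter $\vartheta_1$, so the three combined quantities $y_g^k+z_g^k$, $y^k+z^k$, and $y_f^k+z_f^k$ satisfy a single convex-combination relation. I would isolate this relation first and then expand the inner product.

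First I would introduce the shorthands $u_g \eqdef y_g^k + z_g^k - (y^*+z^*)$, $u \eqdef y^k + z^k - (y^*+z^*)$, and $u_f \eqdef y_f^k + z_f^k - (y^*+z^*)$. Adding Line~\ref{scary:line:y:1} to Line~\ref{scary:line:z:1} and subtracting $(y^*+z^*)$ from both sides gives $u_g = \vartheta_1 u + (1-\vartheta_1)u_f$, which I would solve for $u$ to obtain $u = \tfrac{1}{\vartheta_1}u_g - \tfrac{1-\vartheta_1}{\vartheta_1}u_f$. Substituting this into $2\<u_g, u>$ yields $2\<u_g,u> = \tfrac{2}{\vartheta_1}\sqn{u_g} - \tfrac{2(1-\vartheta_1)}{\vartheta_1}\<u_g, u_f>$.

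Next I would apply the inner product representation \eqref{eq:inner_product_representation} to the cross term, writing $\<u_g, u_f> = \tfrac12(\sqn{u_g} + \sqn{u_f} - \sqn{u_g - u_f})$. This turns the left-hand side into a linear combination of $\sqn{u_g}$, $\sqn{u_f}$, and $\sqn{u_g - u_f}$ with coefficients $\tfrac{1+\vartheta_1}{\vartheta_1}$, $-\tfrac{1-\vartheta_1}{\vartheta_1}$, and $\tfrac{1-\vartheta_1}{\vartheta_1}$ respectively. Substituting $\vartheta_1^{-1} = \vartheta_2^{-1} + \tfrac12$ from \eqref{scary:sigma1} collapses these coefficients to $2 + \tfrac{1-\vartheta_2/2}{\vartheta_2}$, $-\tfrac{1-\vartheta_2/2}{\vartheta_2}$, and $\tfrac{1-\vartheta_2/2}{\vartheta_2}$, since $\tfrac{1}{\vartheta_1}+1 = \tfrac{1}{\vartheta_2}+\tfrac32 = 2 + \tfrac{1-\vartheta_2/2}{\vartheta_2}$ and $\tfrac{1}{\vartheta_1}-1 = \tfrac{1}{\vartheta_2}-\tfrac12 = \tfrac{1-\vartheta_2/2}{\vartheta_2}$. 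The first two coefficients reproduce exactly the grouped right-hand side $2\sqn{u_g} + \tfrac{1-\vartheta_2/2}{\vartheta_2}(\sqn{u_g}-\sqn{u_f})$ of \eqref{scary:eq:3}.

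Finally, because $\vartheta_2 \le 1 < 2$ (in line with $\vartheta_2$ being of the same order as $\tau_2 = \sqrt{\mu/L}$), the coefficient $\tfrac{1-\vartheta_2/2}{\vartheta_2}$ is positive, so the leftover term $\tfrac{1-\vartheta_2/2}{\vartheta_2}\sqn{u_g - u_f}$ is nonnegative and can be dropped to obtain the stated inequality. There is no genuine obstacle here beyond careful coefficient bookkeeping; the only points needing attention are verifying that the substitution of $\vartheta_1$ collapses the coefficients to precisely the claimed form and confirming the sign condition $\vartheta_2 \le 2$ that licenses discarding the squared-difference term.
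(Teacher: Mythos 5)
Your proposal is correct and follows essentially the same route as the paper: both exploit the shared averaging parameter $\vartheta_1$ in Lines~\ref{scary:line:y:1} and~\ref{scary:line:z:1} to get the convex-combination relation among $u$, $u_g$, $u_f$, expand the cross term via the polarization identity, and drop the nonnegative term $\tfrac{1-\vartheta_2/2}{\vartheta_2}\sqn{u_g-u_f}$ after substituting $\vartheta_1^{-1}=\vartheta_2^{-1}+\tfrac12$. The only cosmetic difference is that the paper first peels off $2\sqn{u_g}$ and writes $u-u_g=\tfrac{1-\vartheta_1}{\vartheta_1}(u_g-u_f)$ rather than solving for $u$ outright, and your sign check $\vartheta_2\le 2$ is indeed guaranteed by the choice \eqref{scary:sigma2_aw}.
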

\begin{proof}
	\begin{align*}
	\MoveEqLeft[4]2\<y_g^k + z_g^k - (y^*+z^*),y^k + z^k - (y^*+ z^*)>
	\\&=
	2\sqn{y_g^k + z_g^k - (y^*+z^*)}
	+
	2\<y_g^k + z_g^k - (y^*+z^*),y^k + z^k - (y_g^k + z_g^k)>.
	\end{align*}
	Using Lines~\ref{scary:line:y:1} and~\ref{scary:line:z:1} of Algorithm~\ref{alg:SADOM} we get
	\begin{align*}
	\MoveEqLeft[4]2\<y_g^k + z_g^k - (y^*+z^*),y^k + z^k - (y^*+ z^*)>
	\\&=
	2\sqn{y_g^k + z_g^k - (y^*+z^*)}
	\\&+
	\frac{2(1-\vartheta_1)}{\vartheta_1}\<y_g^k + z_g^k - (y^*+z^*), y_g^k + z_g^k - (y_f^k + z_f^k)>
	\\&=
	2\sqn{y_g^k + z_g^k - (y^*+z^*)}
	\\&+
	\frac{(1-\vartheta_1)}{\vartheta_1}\left(\sqn{y_g^k + z_g^k - (y^*+z^*)} + \sqn{y_g^k + z_g^k - (y_f^k + z_f^k)} \right. 
    \\& \left.- \sqn{y_f^k + z_f^k - (y^*+z^*)}\right)
	\\&\geq
	2\sqn{y_g^k + z_g^k - (y^*+z^*)}
	\\&+
	\frac{(1-\vartheta_1)}{\vartheta_1}\left(\sqn{y_g^k + z_g^k - (y^*+z^*)}  - \sqn{y_f^k + z_f^k - (y^*+z^*)}\right).
	\end{align*}
	Using $\vartheta_1$ definition \eqref{scary:sigma1} we get
	\begin{align*}
	\MoveEqLeft[4]2\<y_g^k + z_g^k - (y^*+z^*),y^k + z^k - (y^*+ z^*)>
	\\&\geq
	2\sqn{y_g^k + z_g^k - (y^*+z^*)}
	\\&+
	\frac{(1-\vartheta_2/2)}{\vartheta_2}\left(\sqn{y_g^k + z_g^k - (y^*+z^*)}  - \sqn{y_f^k + z_f^k - (y^*+z^*)}\right).
	\end{align*}
\end{proof}

\begin{lemma}
	Let $\zeta$ be defined by
	\begin{equation}\label{scary:zeta}
	\zeta = 1/2.
	\end{equation}
	Then the following inequality holds:
	\begin{equation}
	\begin{split}\label{scary:eq:4}
	\MoveEqLeft[6]-2\<y^{k+1} - y^k,y_g^k + z_g^k - (y^*+z^*)>
	\\&\leq
	\frac{1}{\vartheta_2}\sqn{y_g^k + z_g^k - (y^*+z^*)}
	-
	\frac{1}{\vartheta_2}\sqn{y_f^{k+1} + z_f^{k+1} - (y^*+z^*)}
	\\&+
	2\vartheta_2\sqn{y^{k+1} - y^k}
	-
	\frac{1}{2\vartheta_2\chi}\sqn{y_g^k + z_g^k}_{\mP}.
	\end{split}
	\end{equation}
\end{lemma}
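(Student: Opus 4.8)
The plan is to begin from Lines~\ref{scary:line:y:3} and~\ref{scary:line:z:3} of Algorithm~\ref{alg:SADOM}, which upon adding give
\begin{equation*}
y_f^{k+1}+z_f^{k+1}=\left(y_g^k+z_g^k\right)+\vartheta_2\left(y^{k+1}-y^k\right)-\zeta\left(\mW(k)\otimes\mI_d\right)\left(y_g^k+z_g^k\right).
\end{equation*}
Subtracting $y^*+z^*$ and abbreviating $a=y_g^k+z_g^k-(y^*+z^*)$, $b=y^{k+1}-y^k$, and $c=(\mW(k)\otimes\mI_d)(y_g^k+z_g^k)$, I would write $y_f^{k+1}+z_f^{k+1}-(y^*+z^*)=a+\vartheta_2 b-\zeta c$. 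Expanding $\sqn{a+\vartheta_2 b-\zeta c}$ and solving for the cross term (the same manipulation as in the inner-product identity~\eqref{eq:inner_product_representation}) produces an exact expression for $-2\dotprod{b}{a}$ in which $\tfrac{1}{\vartheta_2}\sqn{a}$ and $-\tfrac{1}{\vartheta_2}\sqn{y_f^{k+1}+z_f^{k+1}-(y^*+z^*)}$ already appear with the correct signs and coefficients, since $\sqn{a}=\sqn{y_g^k+z_g^k-(y^*+z^*)}$.

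Next I would isolate the residual $\tfrac{1}{\vartheta_2}\sqn{\vartheta_2 b-\zeta c}$ and bound it by $2\vartheta_2\sqn{b}+\tfrac{2\zeta^2}{\vartheta_2}\sqn{c}$ using the squared-norm-of-the-sum inequality~\eqref{eq:squared_norm_sum} with $n=2$. The first piece supplies the $2\vartheta_2\sqn{y^{k+1}-y^k}$ term of the claim, and I am left with the gossip contributions $\tfrac{2\zeta^2}{\vartheta_2}\sqn{c}-\tfrac{2\zeta}{\vartheta_2}\dotprod{a}{c}$, which must be shown to be at most $-\tfrac{1}{2\vartheta_2\chi}\sqn{y_g^k+z_g^k}_\mP$.

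The crux of the argument is this gossip term, and it rests on two reductions. First, optimality condition~\eqref{opt:z} gives $y^*+z^*\in\cL$, and property~3 of Assumption~\ref{ass:gossip_matrix} gives $\cL=\ker(\mW(k)\otimes\mI_d)$; hence $c=(\mW(k)\otimes\mI_d)a$. Writing $p=\mP a=\mP(y_g^k+z_g^k)$ (using $\mP(y^*+z^*)=0$), the consensus component of $a$ is annihilated, so $c=(\mW(k)\otimes\mI_d)p$, and since the range of the symmetric gossip matrix lies in $\cL^\perp$ the cross terms with the $\cL$-part vanish, giving $\dotprod{a}{c}=\dotprod{p}{(\mW(k)\otimes\mI_d)p}$ and $\sqn{c}=\sqn{(\mW(k)\otimes\mI_d)p}$ with $p\in\cL^\perp$. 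Second, substituting $\zeta=1/2$, the target reduces to $\tfrac{1}{2}\sqn{(\mW(k)\otimes\mI_d)p}-\dotprod{p}{(\mW(k)\otimes\mI_d)p}\le-\tfrac{1}{2\chi}\sqn{p}$, which is exactly half of the contraction estimate $\sqn{(\mW(k)\otimes\mI_d)p-p}\le(1-\chi^{-1})\sqn{p}$ (the lifted form of property~4 of Assumption~\ref{ass:gossip_matrix} on $\cL^\perp$) after expanding the square. Finally $\sqn{p}=\sqn{\mP(y_g^k+z_g^k)}=\sqn{y_g^k+z_g^k}_\mP$ closes the chain.

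I expect the main obstacle to be the bookkeeping of the $\cL/\cL^\perp$ splitting: one must verify that $(\mW(k)\otimes\mI_d)$ kills precisely the consensus part of $a$ and maps into $\cL^\perp$, so that the cross term $\dotprod{q}{(\mW(k)\otimes\mI_d)p}$ with $q\in\cL$ vanishes, and that the specific choice $\zeta=1/2$ is what aligns the coefficient $\tfrac{1}{2}$ in front of $\sqn{(\mW(k)\otimes\mI_d)p}$ with the expansion of the contraction inequality, yielding exactly the factor $\tfrac{1}{2\vartheta_2\chi}$.
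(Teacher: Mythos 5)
Your proposal is correct and follows essentially the same route as the paper's proof: both expand $\sqn{y_f^{k+1}+z_f^{k+1}-(y^*+z^*)}$ around $y_g^k+z_g^k-(y^*+z^*)$ using Lines~\ref{scary:line:y:3} and~\ref{scary:line:z:3}, bound the residual by $2\vartheta_2^2\sqn{y^{k+1}-y^k}+2\zeta^2\sqn{(\mW(k)\otimes\mI_d)(y_g^k+z_g^k)}$, and then use $\zeta=1/2$ together with the optimality condition \eqref{opt:z} and the contraction property of Assumption~\ref{ass:gossip_matrix} on $\cL^\perp$ to produce the $-\tfrac{1}{2\vartheta_2\chi}\sqn{y_g^k+z_g^k}_\mP$ term. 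Your explicit $\cL/\cL^\perp$ bookkeeping is, if anything, slightly more careful than the paper's, but the argument is the same.
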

\begin{proof}
	\begin{align*}
	\MoveEqLeft[4]\sqn{y_f^{k+1} + z_f^{k+1} - (y^*+z^*)}
	=
	\sqn{y_g^k + z_g^k - (y^*+z^*)} 
    \\&+ 2\<y_f^{k+1} + z_f^{k+1} - (y_g^k + z_g^k),y_g^k + z_g^k - (y^*+z^*)>
	\\&+
	\sqn{y_f^{k+1} + z_f^{k+1} - (y_g^k + z_g^k)}
	\\&\leq
	\sqn{y_g^k + z_g^k - (y^*+z^*)} 
    \\&+ 2\<y_f^{k+1} + z_f^{k+1} - (y_g^k + z_g^k),y_g^k + z_g^k - (y^*+z^*)>
	\\&+
	2\sqn{y_f^{k+1} - y_g^k}
	+
	2\sqn{z_f^{k+1} - z_g^k}.
	\end{align*}
	Using Line~\ref{scary:line:y:3} of Algorithm~\ref{alg:SADOM} we get
	\begin{align*}
	\MoveEqLeft[4]\sqn{y_f^{k+1} + z_f^{k+1} - (y^*+z^*)}
	\\&\leq
	\sqn{y_g^k + z_g^k - (y^*+z^*)}
	+
	2\vartheta_2\<y^{k+1} - y^k,y_g^k + z_g^k - (y^*+z^*)>
	\\&+
	2\vartheta_2^2\sqn{y^{k+1} - y^k}
	\\&+
	2\<z_f^{k+1} - z_g^k,y_g^k + z_g^k - (y^*+z^*)>
	+
	2\sqn{z_f^{k+1} - z_g^k}.
	\end{align*}
	Using Line~\ref{scary:line:z:3} of Algorithm~\ref{alg:SADOM} and optimality condition \eqref{opt:z} we get
	\begin{align*}
	\MoveEqLeft[4]\sqn{y_f^{k+1} + z_f^{k+1} - (y^*+z^*)}
	\\&\leq
	\sqn{y_g^k + z_g^k - (y^*+z^*)}
	+
	2\vartheta_2\<y^{k+1} - y^k,y_g^k + z_g^k - (y^*+z^*)>
	\\&+
	2\vartheta_2^2\sqn{y^{k+1} - y^k}
	-
	2\zeta\< (\mW(k)\otimes \mI_d)(y_g^k + z_g^k),y_g^k + z_g^k - (y^*+z^*)>
	\\&+
	2\zeta^2\sqn{(\mW(k)\otimes \mI_d)(y_g^k + z_g^k)}
	\\&=
	\sqn{y_g^k + z_g^k - (y^*+z^*)}
	\\&+
	2\vartheta_2\<y^{k+1} - y^k,y_g^k + z_g^k - (y^*+z^*)>
	+
	2\vartheta_2^2\sqn{y^{k+1} - y^k}
	\\&-
	2\zeta\< (\mW(k)\otimes \mI_d)(y_g^k + z_g^k),y_g^k + z_g^k>
	+
	2\zeta^2\sqn{(\mW(k)\otimes \mI_d)(y_g^k + z_g^k)}.
	\end{align*}
	Using $\zeta$ definition \eqref{scary:zeta} we get
	\begin{align*}
	\MoveEqLeft[4]\sqn{y_f^{k+1} + z_f^{k+1} - (y^*+z^*)}
	\\&\leq
	\sqn{y_g^k + z_g^k - (y^*+z^*)}
	+
	2\vartheta_2\<y^{k+1} - y^k,y_g^k + z_g^k - (y^*+z^*)>
	\\&+
	2\vartheta_2^2\sqn{y^{k+1} - y^k}
	-
	\< (\mW(k)\otimes \mI_d)(y_g^k + z_g^k),y_g^k + z_g^k>
	\\&+
	\frac{1}{2}\sqn{(\mW(k)\otimes \mI_d)(y_g^k + z_g^k)}
	\\&=
	\sqn{y_g^k + z_g^k - (y^*+z^*)}
	+
	2\vartheta_2\<y^{k+1} - y^k,y_g^k + z_g^k - (y^*+z^*)>
	\\&+
	2\vartheta_2^2\sqn{y^{k+1} - y^k}
	-
	\frac{1}{2}\sqn{(\mW(k)\otimes \mI_d)(y_g^k + z_g^k)}
	\\&-
	\frac{1}{2}\sqn{y_g^k + z_g^k}
	+
	\frac{1}{2}\sqn{(\mW(k)\otimes \mI_d)(y_g^k + z_g^k) - (y_g^k + z_g^k)}
	\\&+
	\frac{1}{2}\sqn{(\mW(k)\otimes \mI_d)(y_g^k + z_g^k)}
	\\&\leq
	\sqn{y_g^k + z_g^k - (y^*+z^*)}
	\\&+
	2\vartheta_2\<y^{k+1} - y^k,y_g^k + z_g^k - (y^*+z^*)>
	+
	2\vartheta_2^2\sqn{y^{k+1} - y^k}
	\\&-
	\frac{1}{2}\sqn{y_g^k + z_g^k}_\mP
	+
	\frac{1}{2}\sqn{(\mW(k)\otimes \mI_d)(y_g^k + z_g^k) - (y_g^k + z_g^k)}_\mP.
	\\&=
	\sqn{y_g^k + z_g^k - (y^*+z^*)}
	+
	2\vartheta_2\<y^{k+1} - y^k,y_g^k + z_g^k - (y^*+z^*)>
	\\&+
	2\vartheta_2^2\sqn{y^{k+1} - y^k}
	\\&-
	\frac{1}{2}\sqn{y_g^k + z_g^k}_\mP
	+
	\frac{1}{2}\sqn{(\mW(k)\otimes \mI_d)\mP(y_g^k + z_g^k) - \mP(y_g^k + z_g^k)}.
	\end{align*}
	Using Assumption \ref{ass:gossip_matrix} we get
	\begin{align*}
	\MoveEqLeft[4]\sqn{y_f^{k+1} + z_f^{k+1} - (y^*+z^*)}
	\\&\leq
	\sqn{y_g^k + z_g^k - (y^*+z^*)}
	+
	2\vartheta_2\<y^{k+1} - y^k,y_g^k + z_g^k - (y^*+z^*)>
	\\&+
	2\vartheta_2^2\sqn{y^{k+1} - y^k}
	-
	(2\chi)^{-1}\sqn{y_g^k + z_g^k}_\mP.
	\end{align*}
	Rearranging gives
	\begin{align*}
	\MoveEqLeft[6]-2\<y^{k+1} - y^k,y_g^k + z_g^k - (y^*+z^*)>
	\\&\leq
	\frac{1}{\vartheta_2}\sqn{y_g^k + z_g^k - (y^*+z^*)}
	-
	\frac{1}{\vartheta_2}\sqn{y_f^{k+1} + z_f^{k+1} - (y^*+z^*)}
	\\&+
	2\vartheta_2\sqn{y^{k+1} - y^k}
	-
	\frac{1}{2\vartheta_2\chi}\sqn{y_g^k + z_g^k}_{\mP}.
	\end{align*}
\end{proof} 

\begin{lemma}
    \aw{
    Let $\pi$ be defined as follows:
	\begin{equation}\label{scary:delta_aw}
	\pi = \frac{\beta}{16}.
	\end{equation}
    }
	Let $\varkappa$ be defined as follows:
	\begin{equation}\label{scary:gamma}
	\varkappa = \frac{\nu}{14\vartheta_2\chi^2}.
	\end{equation}
	Let $\theta$ be defined as follows:
	\begin{equation}\label{scary:theta}
	\theta = \frac{\nu}{4\vartheta_2}.
	\end{equation}
    \aw{
    Let $\vartheta_2$ be defined as follows:
	\begin{equation}\label{scary:sigma2_aw}
	\vartheta_2 = \frac{\sqrt{\beta \mu}}{16\chi}.
	\end{equation}
    }
	Let $\Psi_{yz}^k$ be the following Lyapunov function
	\begin{equation}
	\begin{split}\label{scary:Psi_yz}
	\Psi_{yz}^k &= \left(\frac{1}{\theta} + \frac{\beta}{2}\right)\sqn{y^{k} - y^*}
	+
	\frac{\beta}{2\vartheta_2}\sqn{y_f^{k} - y^*}
	+
	\frac{1}{\varkappa}\sqn{\z^{k} - z^*}
	\\&+
	\frac{4}{3\varkappa}\sqn{m^{k}}_\mP
	+
	\frac{\nu^{-1}}{\vartheta_2}\sqn{y_f^{k} + z_f^{k} - (y^*+z^*)}.
	\end{split}
	\end{equation}
	Then the following inequality holds:
	\begin{equation}\label{scary:eq:yz}
    \begin{split}
	\expect{\Psi_{yz}^{k+1}} &\leq 
    \aw{\left(1 - \frac{\sqrt{\beta \mu}}{32 \chi}\right)}\Psi_{yz}^k
	+
	\bg_F(x_g^k, x^*) - \frac{\nu}{2}\sqn{x_g^k - x^*}
	\\&-
	2\expect{\<x^{k+1} - x^*, y^{k+1} - y^*>} + 
    \aw{\sigma^2 \beta}.
    \end{split}
	\end{equation}
\end{lemma}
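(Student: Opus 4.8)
The plan is to assemble the claimed one-step contraction for $\Psi_{yz}^k$ by combining the three recursions already proved: the $y$-bound \eqref{scary:eq:y}, the $z$-bound \eqref{scary:eq:z}, and $\nu^{-1}$ times the coupling bound \eqref{scary:eq:4}, while the inner-product lower bound \eqref{scary:eq:3} is used to dispose of the consensus cross term. Observe first that the left-hand sides align exactly: the $y^{k+1},y_f^{k+1}$ terms of \eqref{scary:eq:y}, the $\z^{k+1},m^{k+1}$ terms of \eqref{scary:eq:z}, and the term $\frac{\nu^{-1}}{\vartheta_2}\sqn{y_f^{k+1}+z_f^{k+1}-(y^*+z^*)}$ obtained from \eqref{scary:eq:4} (after multiplying by $\nu^{-1}$ and moving it to the left) reproduce precisely the five summands of $\Psi_{yz}^{k+1}$ in \eqref{scary:Psi_yz}. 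Thus the whole task is to show the resulting right-hand side is at most $\rho\,\Psi_{yz}^k$ with $\rho:=1-\frac{\sqrt{\beta\mu}}{32\chi}$, plus the three quantities $\bg_F(x_g^k,x^*)-\frac{\nu}{2}\sqn{x_g^k-x^*}$, $-2\expect{\<x^{k+1}-x^*,y^{k+1}-y^*>}$ and $\sigma^2\beta$ that survive from \eqref{scary:eq:y}.

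The central manipulation concerns the two consensus inner products $-2\nu^{-1}\<y_g^k+z_g^k-(y^*+z^*),y^{k+1}-y^*>$ from \eqref{scary:eq:y} and $-2\nu^{-1}\<y_g^k+z_g^k-(y^*+z^*),z^k-z^*>$ from \eqref{scary:eq:z}. Writing $y^{k+1}-y^*=(y^{k+1}-y^k)+(y^k-y^*)$ and regrouping, these become $-2\nu^{-1}\<y_g^k+z_g^k-(y^*+z^*),y^{k+1}-y^k>$ plus $-2\nu^{-1}\<y_g^k+z_g^k-(y^*+z^*),(y^k+z^k)-(y^*+z^*)>$. I would bound the first piece by $\nu^{-1}$ times \eqref{scary:eq:4}, and the second by multiplying the lower bound \eqref{scary:eq:3} by $-\nu^{-1}$ (which flips it to an upper bound). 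The payoff is twofold: \eqref{scary:eq:4} supplies both the term $-\frac{\nu^{-1}}{\vartheta_2}\sqn{y_f^{k+1}+z_f^{k+1}-(y^*+z^*)}$ feeding $\Psi_{yz}^{k+1}$ and the gossip term $-\frac{\nu^{-1}}{2\vartheta_2\chi}\sqn{y_g^k+z_g^k}_\mP$, while \eqref{scary:eq:3} supplies $\frac{\nu^{-1}(1-\vartheta_2/2)}{\vartheta_2}\sqn{y_f^k+z_f^k-(y^*+z^*)}$ which decays the fifth summand. A short computation shows the net coefficient of $\sqn{y_g^k+z_g^k-(y^*+z^*)}$ collapses to $-\tfrac{3}{2}\nu^{-1}$.

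The bulk of the work, and the main obstacle, is the coefficient bookkeeping under the parameter choices \eqref{scary:beta_aw}, \eqref{scary:delta_aw}, \eqref{scary:gamma}, \eqref{scary:theta}, \eqref{scary:sigma2_aw} (recalling $\nu=\mu/2$ from \eqref{scary:nu}). I would check term by term that each of the five Lyapunov summands is scaled by at most $\rho$: for $\sqn{y_f^k-y^*}$ and $\sqn{y_f^k+z_f^k-(y^*+z^*)}$ this reduces to $1-\vartheta_2/2\le\rho$, which holds with equality by the choice of $\vartheta_2$; for $\sqn{y^k-y^*}$ it holds by the value of $\theta$; for $\sqn{\z^k-z^*}$ it becomes $\pi\ge\frac{1}{\varkappa}\cdot\frac{\sqrt{\beta\mu}}{32\chi}$; and for $\sqn{m^k}_\mP$ it becomes $\frac{3\varkappa\pi}{2}\le\frac{1}{4\chi}-\frac{\sqrt{\beta\mu}}{32\chi}$. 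In parallel one verifies that every leftover quadratic is nonpositive: $\big(\frac{\beta\vartheta_2^2}{4}-\frac{1}{\theta}+\frac{2\vartheta_2}{\nu}\big)\sqn{y^{k+1}-y^k}\le 0$, $\big(\varkappa\nu^{-1}(1+6\chi)-\frac{1}{2\vartheta_2\chi}\big)\sqn{y_g^k+z_g^k}_\mP\le 0$ (this is where $\chi\ge 1$ and $\varkappa=\frac{\nu}{14\vartheta_2\chi^2}$ enter), and $(2\varkappa\pi^2-\pi)\sqn{z_g^k-z^k}\le 0$.

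The genuinely delicate term is the positive leftover $2\pi\sqn{z_g^k-z^*}$ from \eqref{scary:eq:z}, which has no obvious partner. My plan is to exploit the optimality condition \eqref{opt:z}, i.e. $y^*+z^*\in\cL$, so that $\mP y^*=-z^*$ and therefore $z_g^k-z^*=\mP\big(y_g^k+z_g^k-(y^*+z^*)\big)-\mP(y_g^k-y^*)$. This gives $\sqn{z_g^k-z^*}\le 2\sqn{y_g^k+z_g^k-(y^*+z^*)}+2\sqn{y_g^k-y^*}$, whence $2\pi\sqn{z_g^k-z^*}$ with $\pi=\beta/16$ is dominated by $\tfrac{\beta}{4}\sqn{y_g^k+z_g^k-(y^*+z^*)}+\tfrac{\beta}{4}\sqn{y_g^k-y^*}$. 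The first summand is swallowed by the $-\tfrac{3}{2}\nu^{-1}$ coefficient found above (using $\beta\le\frac{1}{2L}$ and $\nu=\mu/2$), and the second is cancelled exactly by the spare $-\frac{\beta}{4}\sqn{y_g^k-y^*}$ already present in \eqref{scary:eq:y}. Once these cancellations are in place, discarding the remaining nonpositive quadratics and taking expectations in $\bxi^k$ leaves precisely \eqref{scary:eq:yz}.
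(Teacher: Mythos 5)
Your proposal is correct and follows essentially the same route as the paper: combine \eqref{scary:eq:y} and \eqref{scary:eq:z}, split the consensus inner products via $y^{k+1}-y^*=(y^{k+1}-y^k)+(y^k-y^*)$ so that \eqref{scary:eq:3} and \eqref{scary:eq:4} apply, and then verify the same coefficient conditions under \eqref{scary:delta_aw}--\eqref{scary:sigma2_aw}. Your treatment of the leftover $2\pi\sqn{z_g^k-z^*}$ is also what the paper does implicitly (its parenthetical ``$\pi\le\beta/16$ and $\pi\le 3/(4\mu)$'' encodes exactly the splitting $\sqn{z_g^k-z^*}\le 2\sqn{y_g^k+z_g^k-(y^*+z^*)}+2\sqn{y_g^k-y^*}$ absorbed by the spare $-\frac{\beta}{4}\sqn{y_g^k-y^*}$ and $-\frac{3}{2}\nu^{-1}\sqn{y_g^k+z_g^k-(y^*+z^*)}$ terms), just without your detour through $\mP$.
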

\begin{proof}
	Combining \eqref{scary:eq:y} and \eqref{scary:eq:z} gives
	\begin{align*}
	\MoveEqLeft[4]\left(\frac{1}{\theta} + \frac{\beta}{2}\right)\expect{\sqn{y^{k+1} - y^*}}
	+
	\frac{\beta}{2\vartheta_2}\expect{\sqn{y_f^{k+1} - y^*}}
	\\&
        +
	\frac{1}{\varkappa}\sqn{\z^{k+1} - z^*}
        +
	\frac{4}{3\varkappa}\sqn{m^{k+1}}_\mP
	\\&\leq
	\left(\frac{1}{\varkappa} - \pi\right)\sqn{\z^k - z^*}
	\\&+
	\left(1-(4\chi)^{-1}+\frac{3\varkappa\pi}{2}\right)\frac{4}{3\varkappa}\sqn{m^k}_\mP
	+
	\frac{1}{\theta}\sqn{y^k - y^*}
	\\&+
	\frac{\beta(1-\vartheta_2/2)}{2\vartheta_2}\sqn{y_f^k - y^*}
	-
	2\nu^{-1}\<y_g^k + z_g^k - (y^*+z^*),y^k + z^k - (y^*+ z^*)>
	\\&-
	2\nu^{-1}\expect{\<y_g^k + z_g^k - (y^* + z^*), y^{k+1} - y^k>}
	+
	\varkappa\nu^{-2}\left(1 + 6\chi\right)\sqn{y_g^k+z_g^k}_\mP
	\\&+
	\left(\frac{\beta\vartheta_2^2}{4} - \frac{1}{\theta}\right)\expect{\sqn{y^{k+1} - y^k}}
	+
	2\pi\sqn{z_g^k  - z^*}
	\\&-
	\frac{\beta}{4}\sqn{y_g^k - y^*}
	+
	\bg_F(x_g^k, x^*) - \frac{\nu}{2}\sqn{x_g^k - x^*}
	\\&-
	2\expect{\<x^{k+1} - x^*, y^{k+1} - y^*>}
	+\left(2\varkappa\pi^2-\pi\right)\sqn{z_g^k - z^k} + 
    \aw{\sigma^2 \beta}.
	\end{align*}
	Using \eqref{scary:eq:3} and \eqref{scary:eq:4} we get
	\begin{align*}
	\MoveEqLeft[4]\left(\frac{1}{\theta} + \frac{\beta}{2}\right)\expect{\sqn{y^{k+1} - y^*}}
	+
	\frac{\beta}{2\vartheta_2}\expect{\sqn{y_f^{k+1} - y^*}}
	\\&
        +
	\frac{1}{\varkappa}\sqn{\z^{k+1} - z^*}
        +
	\frac{4}{3\varkappa}\sqn{m^{k+1}}_\mP
	\\&\leq
	\left(\frac{1}{\varkappa} - \pi\right)\sqn{\z^k - z^*}
	\\&+
	\left(1- (4\chi)^{-1}+\frac{3\varkappa\pi}{2}\right)\frac{4}{3\varkappa}\sqn{m^k}_\mP
	+
	\frac{1}{\theta}\sqn{y^k - y^*}
	\\&+
	\frac{\beta(1-\vartheta_2/2)}{2\vartheta_2}\sqn{y_f^k - y^*}
	-
	2\nu^{-1}\sqn{y_g^k + z_g^k - (y^*+z^*)}
	\\&+
	\frac{\nu^{-1}(1-\vartheta_2/2)}{\vartheta_2}\left(\sqn{y_f^k + z_f^k - (y^*+z^*)} - \sqn{y_g^k + z_g^k - (y^*+z^*)}\right)
	\\&+
	\frac{\nu^{-1}}{\vartheta_2}\sqn{y_g^k + z_g^k - (y^*+z^*)}
	-
	\frac{\nu^{-1}}{\vartheta_2}\expect{\sqn{y_f^{k+1} + z_f^{k+1} - (y^*+z^*)}}
	\\&+
	2\nu^{-1}\vartheta_2\expect{\sqn{y^{k+1} - y^k}}
	-
	\frac{\nu^{-1}}{2\vartheta_2\chi}\sqn{y_g^k + z_g^k}_{\mP}
	\\&+
	\varkappa\nu^{-2}\left(1 + 6\chi\right)\sqn{y_g^k+z_g^k}_\mP
	\\&+
	\left(\frac{\beta\vartheta_2^2}{4} - \frac{1}{\theta}\right)\expect{\sqn{y^{k+1} - y^k}}
	+
	2\pi\sqn{z_g^k  - z^*} + 
    \aw{\sigma^2 \beta}
	\\&-
	\frac{\beta}{4}\sqn{y_g^k - y^*}
	+
	\bg_F(x_g^k, x^*) 
    \\&- \frac{\nu}{2}\sqn{x_g^k - x^*}
	-
	2\expect{\<x^{k+1} - x^*, y^{k+1} - y^*>}
	+\left(2\varkappa\pi^2-\pi\right)\sqn{z_g^k - z^k}
	\\&=
	\left(\frac{1}{\varkappa} - \pi\right)\sqn{\z^k - z^*}
	\\&+
	\left(1-(4\chi)^{-1}+\frac{3\varkappa\pi}{2}\right)\frac{4}{3\varkappa}\sqn{m^k}_\mP
	+
	\frac{1}{\theta}\sqn{y^k - y^*}
	\\&+
	\frac{\beta(1-\vartheta_2/2)}{2\vartheta_2}\sqn{y_f^k - y^*}
	+
	\frac{\nu^{-1}(1-\vartheta_2/2)}{\vartheta_2}\sqn{y_f^k + z_f^k - (y^*+z^*)}
	\\&-
	\frac{\nu^{-1}}{\vartheta_2}\expect{\sqn{y_f^{k+1} + z_f^{k+1} - (y^*+z^*)}}
	+
	2\pi\sqn{z_g^k  - z^*}
	-
	\frac{\beta}{4}\sqn{y_g^k - y^*}
	\\&+
	\nu^{-1}\left(\frac{1}{\vartheta_2} - \frac{(1-\vartheta_2/2)}{\vartheta_2} - 2\right)\sqn{y_g^k + z_g^k - (y^*+z^*)}
	\\&+
	\left(\varkappa\nu^{-2}\left(1 + 6\chi\right) - \frac{\nu^{-1}}{2\vartheta_2\chi}\right)\sqn{y_g^k+z_g^k}_\mP
	\\&+
	\left(\frac{\beta\vartheta_2^2}{4} + 	2\nu^{-1}\vartheta_2 - \frac{1}{\theta}\right)\expect{\sqn{y^{k+1} - y^k}}
	\\&+
	\left(2\varkappa\pi^2-\pi\right)\sqn{z_g^k - z^k}
	+
	\bg_F(x_g^k, x^*) - \frac{\nu}{2}\sqn{x_g^k - x^*}
	\\&-
	2\expect{\<x^{k+1} - x^*, y^{k+1} - y^*>} + 
    \aw{\sigma^2 \beta}
	\\&=
	\left(\frac{1}{\varkappa} - \pi\right)\sqn{\z^k - z^*}
	\\&+
	\left(1-(4\chi)^{-1}+\frac{3\varkappa\pi}{2}\right)\frac{4}{3\varkappa}\sqn{m^k}_\mP
	+
	\frac{1}{\theta}\sqn{y^k - y^*}
	\\&+
	\frac{\beta(1-\vartheta_2/2)}{2\vartheta_2}\sqn{y_f^k - y^*}
	+
	\frac{\nu^{-1}(1-\vartheta_2/2)}{\vartheta_2}\sqn{y_f^k + z_f^k - (y^*+z^*)}
	\\&-
	\frac{\nu^{-1}}{\vartheta_2}\sqn{y_f^{k+1} + z_f^{k+1} - (y^*+z^*)}
	+
	2\pi\sqn{z_g^k  - z^*}
	-
	\frac{\beta}{4}\sqn{y_g^k - y^*}
	\\&-
	\frac{3\nu^{-1}}{2}\sqn{y_g^k + z_g^k - (y^*+z^*)}
	+
	\left(2\varkappa\pi^2-\pi\right)\sqn{z_g^k - z^k}
	\\&+
	\left(\varkappa\nu^{-2}\left(1 + 6\chi\right) - \frac{\nu^{-1}}{2\vartheta_2\chi}\right)\sqn{y_g^k+z_g^k}_\mP
	\\&+
	\left(\frac{\beta\vartheta_2^2}{4} + 	2\nu^{-1}\vartheta_2 - \frac{1}{\theta}\right)\expect{\sqn{y^{k+1} - y^k}}
	\\&+
	\bg_F(x_g^k, x^*) - \frac{\nu}{2}\sqn{x_g^k - x^*}
	-
	2\expect{\<x^{k+1} - x^*, y^{k+1} - y^*>} + 
    \aw{\sigma^2 \beta}.
	\end{align*}
	Using $\beta$ definition \eqref{scary:beta_aw} and $\nu$ definition \eqref{scary:nu} we get
	\begin{align*}
	\MoveEqLeft[4]\left(\frac{1}{\theta} + \frac{\beta}{2}\right)\expect{\sqn{y^{k+1} - y^*}}
	+
	\frac{\beta}{2\vartheta_2}\expect{\sqn{y_f^{k+1} - y^*}}
	\\&
        +
	\frac{1}{\varkappa}\sqn{\z^{k+1} - z^*}
        +
	\frac{4}{3\varkappa}\sqn{m^{k+1}}_\mP
	\\&\leq
	\left(\frac{1}{\varkappa} - \pi\right)\sqn{\z^k - z^*}
	\\&+
	\left(1-(4\chi)^{-1}+\frac{3\varkappa\pi}{2}\right)\frac{4}{3\varkappa}\sqn{m^k}_\mP
	+
	\frac{1}{\theta}\sqn{y^k - y^*}
	\\&+
	\frac{\beta(1-\vartheta_2/2)}{2\vartheta_2}\sqn{y_f^k - y^*}
	+
	\frac{\nu^{-1}(1-\vartheta_2/2)}{\vartheta_2}\sqn{y_f^k + z_f^k - (y^*+z^*)}
	\\&-
	\frac{\nu^{-1}}{\vartheta_2}\expect{\sqn{y_f^{k+1} + z_f^{k+1} - (y^*+z^*)}}
	+
	2\pi\sqn{z_g^k  - z^*}
	\\&-
    \aw{\frac{\beta}{4}}\sqn{y_g^k - y^*}
	-
	\frac{3}{\mu}\sqn{y_g^k + z_g^k - (y^*+z^*)}
	\\&+
	\left(2\varkappa\pi^2-\pi\right)\sqn{z_g^k - z^k}
	\\&+
	\left(\varkappa\nu^{-2}\left(1 + 6\chi\right) - \frac{\nu^{-1}}{2\vartheta_2\chi}\right)\sqn{y_g^k+z_g^k}_\mP
	\\&+
	\left(\frac{\beta\vartheta_2^2}{4} + 	2\nu^{-1}\vartheta_2 - \frac{1}{\theta}\right)\expect{\sqn{y^{k+1} - y^k}}
	\\&+
	\bg_F(x_g^k, x^*) - \frac{\nu}{2}\sqn{x_g^k - x^*}
	-
	2\expect{\<x^{k+1} - x^*, y^{k+1} - y^*>} + 
    \aw{\sigma^2 \beta}.
	\end{align*}
	Using $\pi$ definition \eqref{scary:delta_aw} \aw{($\pi \leq \beta / 16$ and $\pi \leq 3/(4 \mu)$)} we get
	\begin{align*}
	\MoveEqLeft[4]\left(\frac{1}{\theta} + \frac{\beta}{2}\right)\expect{\sqn{y^{k+1} - y^*}}
	+
	\frac{\beta}{2\vartheta_2}\expect{\sqn{y_f^{k+1} - y^*}}
	\\&
        +
	\frac{1}{\varkappa}\sqn{\z^{k+1} - z^*}
        +
	\frac{4}{3\varkappa}\sqn{m^{k+1}}_\mP
	\\&\leq
	\left(\frac{1}{\varkappa} - \pi\right)\sqn{\z^k - z^*}
	\\&+
	\left(1-(4\chi)^{-1}+\frac{3\varkappa\pi}{2}\right)\frac{4}{3\varkappa}\sqn{m^k}_\mP
	+
	\frac{1}{\theta}\sqn{y^k - y^*}
	\\&+
	\frac{\beta(1-\vartheta_2/2)}{2\vartheta_2}\sqn{y_f^k - y^*}
	+
	\frac{\nu^{-1}(1-\vartheta_2/2)}{\vartheta_2}\sqn{y_f^k + z_f^k - (y^*+z^*)}
	\\&-
	\frac{\nu^{-1}}{\vartheta_2}\expect{\sqn{y_f^{k+1} + z_f^{k+1} - (y^*+z^*)}}
	\\&+
	\left(\varkappa\nu^{-2}\left(1 + 6\chi\right) - \frac{\nu^{-1}}{2\vartheta_2\chi}\right)\sqn{y_g^k+z_g^k}_\mP
	\\&+
	\left(\frac{\beta\vartheta_2^2}{4} + 	2\nu^{-1}\vartheta_2 - \frac{1}{\theta}\right)\expect{\sqn{y^{k+1} - y^k}}
	\\&+
	\left(2\varkappa\pi^2-\pi\right)\sqn{z_g^k - z^k}
	+
	\bg_F(x_g^k, x^*) 
 \\&- \frac{\nu}{2}\sqn{x_g^k - x^*}
	-
	2\expect{\<x^{k+1} - x^*, y^{k+1} - y^*>} + 
    \aw{\sigma^2 \beta}.
	\end{align*}
	Using $\varkappa$ definition \eqref{scary:gamma} we get
	\begin{align*}
	\MoveEqLeft[4]\left(\frac{1}{\theta} + \frac{\beta}{2}\right)\expect{\sqn{y^{k+1} - y^*}}
	+
	\frac{\beta}{2\vartheta_2}\expect{\sqn{y_f^{k+1} - y^*}}
	\\&
        +
	\frac{1}{\varkappa}\sqn{\z^{k+1} - z^*}
        +
	\frac{4}{3\varkappa}\sqn{m^{k+1}}_\mP
	\\&\leq
	\left(\frac{1}{\varkappa} - \pi\right)\sqn{\z^k - z^*}
	\\&+
	\left(1-(4\chi)^{-1}+\frac{3\varkappa\pi}{2}\right)\frac{4}{3\varkappa}\sqn{m^k}_\mP
	+
	\frac{1}{\theta}\sqn{y^k - y^*}
	\\&+
	\frac{\beta(1-\vartheta_2/2)}{2\vartheta_2}\sqn{y_f^k - y^*}
	+
	\frac{\nu^{-1}(1-\vartheta_2/2)}{\vartheta_2}\sqn{y_f^k + z_f^k - (y^*+z^*)}
	\\&-
	\frac{\nu^{-1}}{\vartheta_2}\expect{\sqn{y_f^{k+1} + z_f^{k+1} - (y^*+z^*)}}
	\\&+
	\left(\frac{\beta\vartheta_2^2}{4} + 	2\nu^{-1}\vartheta_2 - \frac{1}{\theta}\right)\expect{\sqn{y^{k+1} - y^k}}
	\\&+
	\left(2\varkappa\pi^2-\pi\right)\sqn{z_g^k - z^k}
	+
	\bg_F(x_g^k, x^*) - \frac{\nu}{2}\sqn{x_g^k - x^*}
	\\&-
	2\expect{\<x^{k+1} - x^*, y^{k+1} - y^*>} 
    + \aw{\sigma^2 \beta}.
	\end{align*}
	Using $\theta$ definition together with \eqref{scary:nu}, \eqref{scary:beta_aw} \aw{($\beta \leq 16 / (\mu \nu_2)$)} and \eqref{scary:sigma2_aw}  gives
	\begin{align*}
	\MoveEqLeft[4]\left(\frac{1}{\theta} + \frac{\beta}{2}\right)\expect{\sqn{y^{k+1} - y^*}}
	+
	\frac{\beta}{2\vartheta_2}\expect{\sqn{y_f^{k+1} - y^*}}
	\\&
        +
	\frac{1}{\varkappa}\sqn{\z^{k+1} - z^*}
        +
	\frac{4}{3\varkappa}\sqn{m^{k+1}}_\mP
	\\&\leq
	\left(\frac{1}{\varkappa} - \pi\right)\sqn{\z^k - z^*}
	\\&+
	\left(1-(4\chi)^{-1}+\frac{3\varkappa\pi}{2}\right)\frac{4}{3\varkappa}\sqn{m^k}_\mP
	+
	\frac{1}{\theta}\sqn{y^k - y^*}
	\\&+
	\frac{\beta(1-\vartheta_2/2)}{2\vartheta_2}\sqn{y_f^k - y^*}
	+
	\frac{\nu^{-1}(1-\vartheta_2/2)}{\vartheta_2}\sqn{y_f^k + z_f^k - (y^*+z^*)}
	\\&-
	\frac{\nu^{-1}}{\vartheta_2}\expect{\sqn{y_f^{k+1} + z_f^{k+1} - (y^*+z^*)}}
	+
	\left(2\varkappa\pi^2-\pi\right)\sqn{z_g^k - z^k}
	\\&+
	\bg_F(x_g^k, x^*) 
    - \frac{\nu}{2}\sqn{x_g^k - x^*}
	-
	2\expect{\<x^{k+1} - x^*, y^{k+1} - y^*>} + 
    \aw{\sigma^2 \beta}.
	\end{align*}
	Using $\varkappa$ definition \eqref{scary:gamma} and $\pi$ definition \eqref{scary:delta_aw} \aw{($2 \varkappa \pi \leq 1$)} we get
	\begin{align*}
	\MoveEqLeft[4]\left(\frac{1}{\theta} + \frac{\beta}{2}\right)\expect{\sqn{y^{k+1} - y^*}}
	+
	\frac{\beta}{2\vartheta_2}\expect{\sqn{y_f^{k+1} - y^*}}
	\\&
        +
	\frac{1}{\varkappa}\sqn{\z^{k+1} - z^*}
        +
	\frac{4}{3\varkappa}\sqn{m^{k+1}}_\mP
	\\&\leq
	\left(\frac{1}{\varkappa} - \pi\right)\sqn{\z^k - z^*}
	\\&+
	\left(1-(8\chi)^{-1}\right)\frac{4}{3\varkappa}\sqn{m^k}_\mP
	+
	\frac{1}{\theta}\sqn{y^k - y^*}
	+
	\frac{\beta(1-\vartheta_2/2)}{2\vartheta_2}\sqn{y_f^k - y^*}
	\\&+
	\frac{\nu^{-1}(1-\vartheta_2/2)}{\vartheta_2}\sqn{y_f^k + z_f^k - (y^*+z^*)}
	\\&-
	\frac{\nu^{-1}}{\vartheta_2}\expect{\sqn{y_f^{k+1} + z_f^{k+1} - (y^*+z^*)}}
	\\&+
	\bg_F(x_g^k, x^*) - \frac{\nu}{2}\sqn{x_g^k - x^*}
	-
	2\expect{\<x^{k+1} - x^*, y^{k+1} - y^*>} + 
    \aw{\sigma^2 \beta}.
	\end{align*}
    After rearranging and using $\Psi_{yz}^k$ definition \eqref{scary:Psi_yz} we get 
	\begin{align*}
	\expect{\Psi_{yz}^{k+1}} 
	&\leq
	\max\left\{(1 + \theta\beta/2)^{-1}, (1-\varkappa\pi), (1-\vartheta_2/2), (1-(8\chi)^{-1})\right\}\Psi_{yz}^k
	\\&+
	\bg_F(x_g^k, x^*) - \frac{\nu}{2}\sqn{x_g^k - x^*}
	-
	2\expect{\<x^{k+1} - x^*, y^{k+1} - y^*>}
	\\&\leq
    \aw{\left(1 - \frac{\sqrt{\beta \mu}}{32 \chi}\right)}\Psi_{yz}^k
	+
	\bg_F(x_g^k, x^*) - \frac{\nu}{2}\sqn{x_g^k - x^*}
	\\&-
	2\expect{\<x^{k+1} - x^*, y^{k+1} - y^*>} + 
    \aw{\sigma^2 \beta}.
	\end{align*}

    \aw{
    Since $1 - \nu_2 / 2 = 1 - \frac{\sqrt{\beta \mu}}{32 \chi}$, $1 - \varkappa \pi = 1 - 1 / L$ and

    \begin{equation*}
        \frac{1}{1+\theta \beta} = \frac{1}{1 + (\mu \beta)(8 \nu_2)} = \frac{1}{1 + (2 \mu \chi \beta)/\sqrt{\beta \mu}} = \frac{1}{1 + 2 \chi \sqrt{\mu \beta}} \leq 1 - \frac{\sqrt{\beta \mu}}{32 \chi}
    \end{equation*}

    This inequality holds since 

    \begin{equation*}
        1 + 2 \chi \sqrt{\mu \beta} - \frac{\sqrt{\mu \beta}}{32 \chi} - \frac{\mu \beta}{16} \geq 1 + 2 \sqrt{\mu \beta} - \frac{\sqrt{\mu \beta}}{32} - \frac{\mu \beta}{16} \geq 1
    \end{equation*}

    Because $\mu \beta \leq \mu/ (2L) \leq 1/2$
    
    }
\end{proof}

\begin{proof}[Proof of Theorem~\ref{th:SADOM}]
	Using \aw{$\tau_2$ definition \eqref{scary:tau2} and} combining \eqref{scary:eq:x} and \eqref{scary:eq:yz} gives
	\begin{align*}
	\expect{\Psi_x^{k+1}} + \expect{\Psi_{yz}^{k+1}}
	&\leq
    \aw{\max\{1 - \tau_2/2, (1 + \eta \alpha)^{-1}\}}\Psi_x^k + \frac{\aw{\beta}\sigma^2}{\tau_2
    }
	\\&+
    \aw{\left(1 - \frac{\sqrt{\beta \mu}}{32 \chi}\right)}\Psi_{yz}^k  + 
    \aw{\sigma^2 \beta + C \Delta^2}
	\\&\leq
    \aw{\left(1 - \frac{\sqrt{\beta \mu}}{32 \chi}\right)}(\Psi_x^k + \Psi_{yz}^k) + \aw{\sigma^2 \beta \left( 1 + \sqrt{\frac{L}{\mu}} \right) + \frac{4}{\mu} \Delta^2}.
	\end{align*}

    \aw{
    The last inequality is fulfilled since  
    \begin{equation*}
        (1 + \alpha \eta)\left(1 - \frac{\sqrt{\beta\mu}}{32 \chi}\right) \geq 1
    \end{equation*}

    Because
    \begin{equation*}
        \left(1 + \frac{\sqrt{\mu L}}{4(1/\beta + L)}\right) \left(1 - \frac{\sqrt{\beta\mu}}{32 \chi}\right) \geq 1
    \end{equation*}

    This inequality follows from $\beta$ definition \eqref{scary:beta_aw} ($\beta \leq 1/(2L))$ and a fact that $\chi \geq 1$:

    \begin{equation*}
        \left(1 + \frac{\sqrt{\mu}}{12 \sqrt{L}}\right) \left(1 - \frac{\sqrt{\mu}}{32 \sqrt{2} \sqrt{L}}\right) \geq 1
    \end{equation*}

    This inequality is true since $\mu / L \leq 1$.} 
    
    This implies
	\begin{align*}
	\expect{\Psi_x^{N}} + \expect{\Psi_{yz}^N} &\leq 
    \aw{\left(1 - \frac{\sqrt{\beta \mu}}{32 \chi}\right)^N}(\Psi_x^0 + \Psi_{yz}^0) + 
    \aw{ \frac{32 \chi}{\sqrt{\mu}}\sigma^2 \sqrt{\beta} \left( 1 + \sqrt{\frac{L}{\mu}} \right)}
    \aw{ + \frac{128 \chi}{\sqrt{\beta \mu^3}} \Delta^2}
    \\
    &\leq
    \aw{\left(1 - \frac{\sqrt{\beta \mu}}{32 \chi}\right)^N}(\Psi_x^0 + \Psi_{yz}^0) + 
    \aw{ \frac{64 \chi}{\mu}\sigma^2 \sqrt{\beta L}}
    \aw{ + \frac{128 \chi}{\sqrt{\beta \mu^3}} \Delta^2}.
	\end{align*}
	Using $\Psi_x^k$ definition \eqref{scary:Psi_x}, we have
    \begin{align*}
	\mathbb{E}\Bigg[\left(\frac{1}{\eta} + \alpha\right)\sqn{x^{N} - x^*} &+ \frac{2}{\tau_2}\left(\bg_F(x_f^{N},x^*)-\frac{\nu}{2}\sqn{x_f^{N} - x^*} \right) \Bigg]
    \\
    &\leq
    \aw{\left(1 - \frac{\sqrt{\beta \mu}}{32 \chi}\right)^N}(\Psi_x^0 + \Psi_{yz}^0) + 
    \aw{ \frac{64 \chi}{\mu}\sigma^2 \sqrt{\beta L}}
    \aw{ + \frac{128 \chi}{\sqrt{\beta \mu^3}} \Delta^2}.
	\end{align*}
    Using the choices of $\eta$, $\alpha$, $\nu$, $\tau_2$ and 
    \aw{$\eta = ([1/\beta + L]\tau_2)^{-1} \leq (L \tau_2)^{-1} = (\sqrt{\mu L})^{-1}$}, we get
	\begin{align*}
	\mathbb{E}\Bigg[\sqrt{\mu L} \sqn{x^{N} - x^*} &+ 2 \sqrt{\frac{L}{\mu}}\left(\bg_F(x_f^{N},x^*)-\frac{\mu}{4}\sqn{x_f^{N} - x^*} \right) \Bigg]
    \\
    &\leq
    \aw{\left(1 - \frac{\sqrt{\beta \mu}}{32 \chi}\right)^N}(\Psi_x^0 + \Psi_{yz}^0) + 
    \aw{ \frac{64 \chi}{\mu}\sigma^2 \sqrt{\beta L}}
    \aw{ + \frac{128 \chi}{\sqrt{\beta \mu^3}} \Delta^2}.
	\end{align*}
And finally,
\begin{align*}
	\mathbb{E}\Bigg[ \sqn{x^{N} - x^*} &+ \frac{2}{\mu}\left(\bg_F(x_f^{N},x^*)-\frac{\mu}{4}\sqn{x_f^{N} - x^*} \right) \Bigg]
    \\
    &\leq
    \aw{\left(1 - \frac{\sqrt{\beta \mu}}{32 \chi}\right)^N} (\sqrt{\mu L})^{-1}(\Psi_x^0 + \Psi_{yz}^0) + 
    \aw{ \frac{64 \chi}{\sqrt{\mu^{3}}}\sigma^2 \sqrt{\beta}}
    \aw{ + \frac{128 \chi}{\sqrt{\beta L} \mu^2} \Delta^2}.
	\end{align*}
\end{proof}

\begin{proof}[Proof of Corollary \ref{cor:beta_aw}]
    \aw{Write out the convergence rate of the SADOM algorithm from Theorem \ref{th:SADOM}:

 \begin{align}
 \label{eq:tmp_aw}
	\mathbb{E}\Bigg[ \sqn{x^{N} - x^*} &+ \frac{2}{\mu}\left(\bg_F(x_f^{N},x^*)-\frac{\mu}{4}\sqn{x_f^{N} - x^*} \right) \Bigg]
    \notag\\
    &\leq
    \aw{\left(1 - \frac{\sqrt{\beta \mu}}{32 \chi}\right)^N} (\sqrt{\mu L})^{-1}(\Psi_x^0 + \Psi_{yz}^0) + 
    \aw{ \frac{64 \chi}{\sqrt{\mu^{3}}}\sigma^2 \sqrt{\beta}}
    \aw{ + \frac{128 \chi}{\sqrt{\beta L} \mu^2} \Delta^2}.
	\end{align}

   Let us introduce notations for shortness: 
$$
        r_N := \mathbb{E}\Bigg[ \sqn{x^{N} - x^*} + \frac{2}{\mu}\left(\bg_F(x_f^{N},x^*)-\frac{\mu}{4}\sqn{x_f^{N} - x^*} \right) \Bigg],
$$
$$
        r_0 := (\sqrt{\mu L})^{-1}(\Psi_x^0 + \Psi_{yz}^0), ~~ a := \frac{\sqrt{\mu}}{32 \chi}, ~~ b := \frac{64 \chi}{\sqrt{\mu^3}},~~ c := \frac{128 \chi}{\sqrt{L} \mu^2}.
$$
    
    The equation \eqref{eq:tmp_aw} takes the form

    \begin{equation}
        \label{eq:r_n_final_aw}
        \begin{split}
        r_{N} &\leq r_0 (1 - a\sqrt{\beta})^N + b \sigma^2 \sqrt{\beta} + \frac{c \Delta^2}{\sqrt{\beta}}
        \\& \leq
        r_0 \exp\left[-a \sqrt{\beta} N \right] + b \sigma^2 \sqrt{\beta} + \frac{c \Delta^2}{\sqrt{\beta}}
        \end{split}
    \end{equation}

    Consider two cases

    \begin{itemize}
        \item If $\frac{1}{\sqrt{2L}} \geq \frac{\ln (\max\{2, a r_0 N / (b \sigma^2) \})}{a N}$, then choose 
        \begin{equation*}
            \sqrt{\beta} = \frac{\ln (\max\{2, a r_0 N /(b \sigma^2) \})}{a N}
        \end{equation*}

        And Equation \eqref{eq:r_n_final_aw} becomes 

        \begin{equation*}
            r_{N} = \widetilde{\mathcal{O}} \left(\frac{b \sigma^2}{a N} + a c \Delta^2 N \right)
        \end{equation*}

        \item If $\frac{1}{\sqrt{2L}} \leq \frac{\ln (\max\{2, a r_0 N / (b \sigma^2) \})}{a N}$, then choose 

        \begin{equation*}
            \sqrt{\beta} = \frac{1}{\sqrt{2L}}
        \end{equation*}

        And Equation \eqref{eq:r_n_final_aw} becomes 

        \begin{equation*}
            r_{N} = \widetilde{\mathcal{O}} \left(r_0 \exp\left[-\frac{aN}{\sqrt{2L}}\right] + \frac{b \sigma^2}{a N} + a c \Delta^2 N \right)
        \end{equation*}
    \end{itemize}

    After substituting the notations $a, b , c$ we obtain

    \begin{align*}
        \mathbb{E}\Bigg[ \sqn{x^{N} - x^*} &+ \frac{2}{\mu}\left(\bg_F(x_f^{N},x^*)-\frac{\mu}{4}\sqn{x_f^{N} - x^*} \right) \Bigg]
        \\
        &= \widetilde{\mathcal{O}} \left( C_0 \exp\left[-\frac{\sqrt{\mu} N}{32 \sqrt{2} \chi \sqrt{L}}\right] + \frac{\chi^2 \sigma^2}{B \mu^{2} N} + \frac{\Delta^2 N}{\sqrt{L} \mu^{3/2}} \right).
    \end{align*}

    This finishes the proof.}

\end{proof}





    \section{Proof of Theorem \ref{th:ZOSADOM_aw}} \label{proof_Theorem2_andrew} 

    \subsection{Proof of Lemma \ref{Lemma:deltas_aw}}
    \label{proof_lemma_deltas_aw}
    \begin{proof}
    First lets consider TPF smoothing scheme \eqref{eq:tpf_aw}:

    \begin{equation*}
    \begin{split}
        \gg(x, \xi, e) 
        &= \frac{d}{2 \gamma}(F_{\delta}(x + \gamma e) - F_{\delta}(x - \gamma e))e 
        \\&= \frac{d}{2 \gamma}(F(x + \gamma e) - F(x - \gamma e))e + 
        \frac{d e}{2 \gamma}(\delta(x + \gamma e) - \delta(x - \gamma e))
    \end{split}    
    \end{equation*}

    According to \cite{Gasnikov_2022} the first summand is an unbiased gradient estimator, let us consider the second one:

    \begin{equation*}
        \left\|\boldsymbol{\omega}(x) = \mathbb{E}\left[\frac{d e}{2 \gamma}(\delta(x + \gamma e, \xi) - \delta(x - \gamma e, \xi))\right] \right\| \leq \frac{d}{2 \gamma} \cdot 2 \widetilde{\Delta} = \frac{d \widetilde{\Delta}}{\gamma}
    \end{equation*}

    Similar results are obtained for the two remaining schemes \eqref{eq:opf_1_aw} and \eqref{eq:opf_2_aw}. For OPF via single realization of $\xi$ \eqref{eq:opf_1_aw}:

    \begin{equation*}
        \left\|\boldsymbol{\omega}(x) = \mathbb{E}\left[\frac{d e}{\gamma}(\delta(x + \gamma e, \xi) \right] \right\| \leq \frac{d \widetilde{\Delta}}{\gamma}
    \end{equation*}

    For OPF via double realization of $\xi$ \eqref{eq:opf_2_aw}:
    \begin{equation*}
        \left\|\boldsymbol{\omega}(x) = \mathbb{E}\left[\frac{d e}{2 \gamma}(\delta(x + \gamma e, \xi_1) - \delta(x - \gamma e, \xi_2))\right] \right\| \leq  \frac{d \widetilde{\Delta}}{\gamma}
    \end{equation*}
    
\end{proof}

\subsection{Proof of Theorem \ref{th:ZOSADOM_aw}}
\label{proof_theorem_2_aw}
\begin{proof}
    Write out converge result from Corollary \ref{cor:beta_aw}:

    \begin{align*}
        \mathbb{E}\Bigg[ \frac{\mu}{2}\sqn{x^{N} - x^*} &+ F(x_f^{N}) - F(x^*) -\frac{\mu}{4}\sqn{x_f^{N} - x^*} \Bigg]
        \\
        &= \widetilde{\mathcal{O}} \left( \hat C_0 \exp\left[-\frac{\sqrt{\mu} N}{32 \sqrt{2} \chi \sqrt{L}}\right] + \frac{\chi^2 \sigma^2}{B \mu N} + \frac{\Delta^2 N}{\sqrt{L} \mu^{1/2}} \right).
    \end{align*}

    Consider the first summand

    \begin{equation*}
        \hat C_0 \exp\left[-\frac{\sqrt{\mu} N}{32 \chi \sqrt{L}}\right] \leq \varepsilon / 6
    \end{equation*}

    So 

    \begin{equation*}
        N \geq 32 \sqrt{2} \chi \sqrt{\frac{L}{\mu}} \log\left(\frac{6 \hat C_0}{\varepsilon}\right)
    \end{equation*}

    From Lemma \ref{Lemma:connect_f_with_f_gamma_aw}: $L_{F_{\gamma}} = \frac{\sqrt{d} M_2}{\gamma}$ and $\gamma = \frac{\varepsilon}{2 M_2}$.
    So $L_{F_{\gamma}} = \frac{2\sqrt{d} M_2^2}{\varepsilon}$, and we obtain

    \begin{equation*}
        N \gtrsim 32 \sqrt{2} \chi \frac{\sqrt{2} d^{1/4} M_2}{\sqrt{\varepsilon \mu}}
    \end{equation*}

    Consider the second summand

    \begin{equation*}
        \frac{\chi^2 \sigma^2}{B \mu N} \leq \varepsilon / 6
    \end{equation*}

    So with $\sigma^2 = \tilde \sigma^2$

    \begin{equation*}
        N \geq \frac{6 \chi^2 \tilde \sigma^2}{\varepsilon B \mu}
    \end{equation*}

    Finally we obtain

    \begin{equation*}
        N = \mathcal{\tilde O}\left(\max\left\{\frac{d^{1/4} M_2 \chi}{\sqrt{\varepsilon \mu}}; \frac{\chi^2 \tilde \sigma^2}{\varepsilon B \mu}\right\}\right)
    \end{equation*}

    Consider the third summand

    \begin{equation*}
        \frac{\Delta^2 N}{\sqrt{L} \mu^{1/2}} \leq \varepsilon / 6
    \end{equation*}

    So 

    \begin{equation*}
        \Delta^2 \leq \frac{\sqrt{L} \mu^{1/2} \varepsilon}{6N} = \frac{\sqrt{2} M_2 d^{1/4} \mu^{1/2} \sqrt{\varepsilon}}{6N}
    \end{equation*}

    From Lemma \ref{Lemma:deltas_aw}: $\Delta = \frac{d \widetilde{\Delta}}{\gamma}$, so 

    \begin{equation*}
        \widetilde{\Delta}^2 \leq \frac{\sqrt{2} \varepsilon^{5/2} \mu^{1/2}}{3 d^{7/4} M_2 N}
    \end{equation*}

    Finaly we obtain 

    \begin{equation*}
        \widetilde{\Delta}^2 = \mathcal{O}\left(\frac{\varepsilon^{5/2} \mu^{1/2}}{d^{7/4} M_2 N}\right)
    \end{equation*}
    
\end{proof}

\end{appendices}

\end{document}